\g@addto@macro\bfseries{\boldmath}
\title{A higgledy-piggledy set of planes\\ based on the ABB-representation of linear sets}
\author{Lins Denaux \\ {\it Ghent University} \and Jozefien D'haeseleer \\ {\it Ghent University} \and Geertrui Van de Voorde \\ {\it University of Canterbury}}
\date{}
\newtheorem{thm}{Theorem}[section]
\newtheorem{lm}[thm]{Lemma}
\newtheorem{res}[thm]{Result}
\newtheorem{crl}[thm]{Corollary}
\newtheorem{prop}[thm]{Proposition}
\theoremstyle{definition}
\newtheorem{rmk}[thm]{Remark}
\newtheorem{df}[thm]{Definition}
\newcommand{\NN}{\mathbb{N}}
\newcommand{\FF}{\mathbb{F}}
\newcommand{\set}[1]{\left\{#1\right\}}
\newcommand{\sett}[2]{\left\{#1\,:\,#2\right\}}
\newcommand{\V}[2]{\mathrm{V}\!\mspace{-2mu}\left(#1,#2\right)}
\newcommand{\pg}[2]{\textnormal{PG}\!\left(#1,#2\right)}
\newcommand{\pgTitle}[2]{\textnormal{\textbf{PG}}\!\left(#1,#2\right)}
\newcommand{\ag}[2]{\textnormal{AG}\!\left(#1,#2\right)}
\newcommand{\spread}{\mathcal{D}}
\newcommand{\linset}{\mathcal{L}}
\newcommand{\pointset}{\mathcal{P}}
\newcommand{\mS}{\mathcal{S}}
\newcommand{\qbin}[2]{\genfrac{[}{]}{0pt}{}{#1}{#2}_q}
\renewcommand{\geq}{\geqslant}
\renewcommand{\leq}{\leqslant}
\begin{document}

\maketitle

\begin{abstract} %The André/Bruck-Bose correspondence represents points of the projective plane $\pg{2}{q^t}$ in $\pg{2t}{q}$. 

In this paper, we investigate the André/Bruck-Bose representation of certain $\FF_q$-linear sets contained in a line of $\pg{2}{q^t}$. We show that {\em scattered $\FF_q$-linear sets of rank $3$} in $\pg{1}{q^3}$ correspond to particular hyperbolic quadrics and that {\em $\FF_q$-linear clubs} in $\pg{1}{q^t}$ are linked to subspaces of a certain $2$-design based on normal rational curves; this design extends the notion of a {\em circumscribed bundle of conics}. Finally, we use these results to construct optimal higgledy-piggledy sets of planes in $\pg{5}{q}$.

%    We consider the André/Bruck-Bose representation of the projective plane $\pg{2}{q^t}$ in $\pg{2t}{q}$. We investigate the representation of a linear set of rank $k$ on a line, different from the line at infinity in $\pg{2}{q^t}$. More precisely, we characterize the representation of tangent scattered linear sets on a line for $k=3, t=3$, tangent clubs with head point contained in the line at infinity for $k\geq 3, t\geq k,$ and tangent clubs with head point not contained in the line at infinity for $k=3, t\geq 3$.
%    This characterisation of the the André/Bruck-Bose representation of linear sets can be used to find optimal Higgledy-piggledy sets.
   % \textcolor{magenta}{Misschien/hopelijk kunnen we ook nog iets zeggen voor $k\geq 3$.}
\end{abstract}

{\it Keywords:} André/Bruck-Bose representation, linear set, club, scattered linear set, normal rational curve, circumscribed bundle, higgledy-piggledy set

{\it Mathematics Subject Classification:} 51E20.

\section{Introduction}
\subsection{Motivation and overview}

{\em Linear sets} are particular point sets in a finite projective space. They are of interest in finite geometry, and have been studied in recent years through their connections with other topics such as {\em blocking sets}, and their applications in coding theory (see e.g.\ \cite{olga,LavrauwVandeVoordeFieldRed,olgaZullo}). Linear sets generalise the concept of a subgeometry as it has been shown that every linear set is either a subgeometry or the projection of a subgeometry \cite{lunardon}.

The {\em Andr\'e/Bruck-Bose representation} is a way to represent the projective plane over the field $\FF_{q^t}$ with $q^t$ elements, as an incidence structure defined over the subfield $\FF_q$. It is a natural question to study the ABB-representation of certain `nice' sets in the plane, and this has previously been done for sets such as sublines and subplanes \cite{RotteySheekeyVandeVoorde}, (sub)conics \cite{ABBconics} and Hermitian unitals \cite{ABB1}. As such, one can ask the same question about the ABB-representation of $\FF_q$-linear sets; we will give a partial answer in this paper.

We will see that the ABB-representation of a certain type of linear set gives rise to an interesting point set which can be described by using a subspace of a {\em design} of certain {\em normal rational curves}. This design is a generalisation of a well-known design based on the conics of a {\em circumscribed bundle of conics} \cite{BakerBrownEbertFisher}. 

After having introduced the necessary background and definitions in Section \ref{sec1}, we will show in Section \ref{sec2} how to construct this design in a geometric way, and use coordinates to show that the obtained design is, in fact, isomorphic to the design of points and lines in a projective space.

In Sections \ref{sec3} and \ref{sec4}, we will turn our attention towards the ABB-representation of clubs of rank $k$ in $\pg{1}{q^t}$ (Theorem \ref{thmclub}) and scattered linear sets of rank $3$ in $\pg{1}{q^3}$ (Theorem \ref{thmscattered}), both tangent to the line at infinity $\ell_\infty$. 

In Section \textcolor{cyan}{\ref{sec5}}, we first provide the necessary background on higgledy-piggledy sets, and then use the results of Sections \ref{sec3} and \ref{sec4} to show the existence and give explicit constructions of sets of seven planes in $\pg{5}{q}$ in higgledy-piggledy arrangment. This answers an open question of \cite{Denaux}. It was this link which provided the incentive to consider the problem of determining the ABB-representation of linear sets in $\pg{1}{q^3}$.

\subsection{Preliminaries}\label{sec1}
The topics introduced in the following subsections are interrelated; for more information, we refer to \cite{LavrauwVandeVoordeFieldRed}, \cite{RotteySheekeyVandeVoorde} and \cite{IndicatorSet}, respectively.

\subsubsection{Field reduction and Desarguesian spreads}
It is well-known that the vector space $\V{r}{q^t}$ is isomorphic to $\V{rt}{q}$; this isomorphism translates to a correspondence between the associated projective spaces $\pg{r-1}{q^t}$ and $\pg{rt-1}{q}$. Every point of $\pg{r-1}{q^t}$ corresponds to a $1$-dimensional vector space over $\FF_{q^t}$, which is a $t$-dimensional vector space over $\FF_q$, and hence, corresponds to a $(t-1)$-dimensional subspace of $\pg{rt-1}{q}$. In this way, the point set of $\pg{r-1}{q^t}$ gives rise to a set $\spread$ of $(t-1)$-dimensional subspaces of $\pg{rt-1}{q}$ partitioning the point set of $\pg{rt-1}{q}$, that is, they form a {\em $(t-1)$-spread} of $\pg{rt-1}{q}$. Any spread isomorphic to $\spread$ is called a \emph{Desarguesian $(t-1)$-spread}. Similarly, a $(k-1)$-dimensional subspace of $\pg{r-1}{q^t}$ corresponds to a $(kt-1)$-dimensional subspace of $\pg{rt-1}{q}$, spanned by elements of $\spread$.
More formally, we can define the field reduction map $\mathcal{F}_{q,r,t}$ which maps a $(k-1)$-dimensional subspace of $\pg{r-1}{q^t}$ to its associated $(kt-1)$-dimensional subspace of $\pg{rt-1}{q}$. We will omit the subscript of $\mathcal{F}_{q,r,t}$ if the field size and dimensions are clear. If $\mathcal{S}$ is a point set,  we use $\mathcal{F}(\mathcal{S})$ to denote the union of the images of the points in $\mathcal{S}$ under $\mathcal{F}$.

%For more information about field reduction, we refer to \cite{LavrauwVandeVoordeFieldRed}. 
   \subsubsection{The Andr\'e/Bruck-Bose representation}\label{abbintro}
Andr\'e \cite{Andre} and Bruck and Bose \cite{BruckBose} independently derived a representation of a projective plane of order $q^t$ in the projective space $\pg{2t}{q}$.
We refer to this correspondence as the \emph{André/Bruck-Bose representation} or the \emph{ABB-representation}.

Let $H_\infty$ be a hyperplane in $\pg{2t}{q}$ and let $\spread$ be a $(t-1)$-spread in $H_\infty$.
Let $\pointset$ be the set of {\em affine} points (i.e. those of $\pg{2t}{q}$, not contained in $H_\infty$), together with the $q^t+1$ spread elements of $\spread$.
Let $\linset$ be the set of $t$-spaces in $\pg{2t}{q}$ meeting $H_\infty$ in an element of $\spread$, together with the hyperplane at infinity $H_\infty$.
The incidence structure $(\pointset,\linset,I)$, with $I$ the natural incidence relation, is isomorphic to a projective plane of order $q^t$, which is called the \emph{Andr\'e/Bruck-Bose plane} corresponding to the spread $\spread$.
The Andr\'e/Bruck-Bose plane corresponding to a spread $\spread$ is Desarguesian if and only if the spread $\spread$ is Desarguesian.

Now consider $\pg{2}{q^t}$ and let $\ell_\infty$ be a designated line at infinity. Let $H_\infty=\mathcal{F}\left(\ell_\infty\right)$ be a $(2t-1)$-dimensional subspace of $\pg{3t-1}{q}=\mathcal{F}(\pg{2}{q^t})$. Fix a $2t$-space $\mu$ through $H_\infty$. 
It is not hard to see that the Andr\'e/Bruck-Bose representation of an affine point $P$ of $\pg{2}{q^t}$ in $\mu\cong\pg{2t}{q}$ is the point $\mathcal{F}(P)\cap \mu$. We let $\phi$ denote the Andr\'e/Bruck-Bose map on affine points:
$$\phi(P):=\mathcal{F}(P)\cap \mu.$$ The ABB-representation of a point $Q\in\ell_\infty$ is the $(t-1)$-space $\mathcal{F}(Q)$.

\subsubsection{Indicator spaces and Desarguesian subspreads}\label{subsubind}
Finally, we recall the construction of a spread as introduced by Segre \cite{Segre64}. Embed $\Lambda \simeq \pg{rt-1}{q}$ as a subgeometry of $\Lambda^*\simeq \pg{rt-1}{q^t}$. The subgroup of $\mathrm{P}\Gamma\mathrm{L}(rt,q^t)$ fixing $\Lambda$ pointwise is isomorphic to $\mathrm{Aut}(\FF_{q^t}/\FF_q)$. Consider a generator $g$ of this group. One can prove that that there exists an $(r - 1)$-space $\nu$ skew to the subgeometry $\Lambda$ and that a subspace of $\pg{rt-1}{q^t}$ of dimension $s$ is fixed by $g$  if and only if it intersects the subgeometry $\Lambda$ in a subspace of dimension $s$  (see \cite{IndicatorSet}). Let $P$ be a point of $\nu$ and let $L(P)$ denote the $(t-1)$-dimensional subspace generated by the {\em conjugates} of $P$, i.e., $L(P) = \langle P,P^g,\ldots,P^{g^{t-1}}\rangle$. Then $L(P)$ is fixed by $g$ and hence it intersects $\pg{rt-1}{q}$ in a $(t-1)$-dimensional subspace. Repeating this for every point of $\nu$, one obtains a set $\spread$ of $(t-1)$-spaces of the subgeometry $\Gamma$ forming a spread. This spread $\spread$ can be shown to be a Desarguesian spread and $\{\nu,\nu^{g},\ldots,\nu^{g^{t-1}}\}$ is called the {\it indicator set} of $\spread$. An indicator set is also called a set of {\em director spaces} \cite{Segre64}.
It is known from \cite[Theorem 6.1]{IndicatorSet} that for any Desarguesian $(t-1)$-spread of $\pg{rt-1}{q}$ there exist a unique indicator set in $\pg{rt-1}{q^t}$. 

In this paper, {\color{black} we will make use of a particular coordinate system describing a subgeometry $\pi \simeq \pg{t-1}{q}$ in $\pg{t-1}{q^t}$, and for each $s|t$, we will define an $(s-1)$-spread denoted by $\spread_s$ of $\pi$. In the case that $s=t$, this `spread' of $\pi$ is the subspace $\pi$ itself. To describe the set-up,}

%we will frequently make a particular choice for $\Lambda$ and $g$:

let $\sigma$ denote the collineation of $\pg{t-1}{q^t}$ which maps a point with homogeneous coordinates $(x_0,x_1,x_2,\ldots,x_{t-1})$, $x_i\in \FF_{q^t}$, not all zero, onto the point with homogeneous coordinates $(x_{t-1}^q,x_0^q,x_1^q,\ldots,\ldots,x_{t-2}^q)$. The fixed points of $\sigma$ then form a 
subgeometry $\pi\simeq \pg{t-1}{q}$, consisting of all points with homogeneous coordinates $(x,x^q,x^{q^2},\ldots,x^{q^{t-1}})$ for $x\in \FF_{q^t}$. Let $R$ denote the point with coordinates $(1,0,\ldots,0)$, then we see that $R^\sigma=(0,1,\ldots,0)$, $R^{\sigma^2}=(0,0,1,\ldots,0)$ $\ldots$, $R^{\sigma^{t-1}}=(0,0,\ldots,1)$.
Given $R$, every positive divisor $s$ of $t$ induces a unique Desarguesian $(s-1)$-spread $\spread_s$ of $\pi$: consider $\Lambda_s=\mathrm{Fix}(\sigma^s)\simeq \pg{t-1}{q^s}$ and let $\Pi=\langle R,R^{\sigma^s},R^{\sigma^{2s}},\ldots,R^{\sigma^{t-s}}\rangle \cap \Lambda_s$. Then $\{\Pi,\Pi^\sigma,\ldots,\Pi^{\sigma^{s-1}}\}$ is a set of director spaces for $\spread_s$ in $\pg{t-1}{q}$. 

We denote the extension of an element $D$ of $\spread_s$ to $\pg{t-1}{q^t}$ by $\overline{D}$. 

For ease of notation in the case $s=t$, we define the `spread' $\spread_t$ to be equal to $\pi$ and the indicator set of $\pi$ to be the point set $\{R,R^\sigma,\ldots,R^{\sigma^{t-1}}\}$.

\begin{df} Let $$P_x:=\left(\frac{1}{x},\frac{1}{x^q},\frac{1}{x^{q^2}},\ldots,\frac{1}{x^{q^{t-1}}}\right)$$ denote the point of $\pi\simeq \pg{t-1}{q}$ corresponding to $\frac{1}{x}\in \FF_{q^t}^\ast$. 
\end{df}

Note that $P_x=P_y$ if and only if $x/y\in \FF_q$. Furthermore, it is easy to see that $P_x$ is contained in the element $D$ of $\spread_s$ spanned by the points $X,X^\sigma,\ldots,X^{\sigma^{s-1}}$ where $X$ is stabilised by $\sigma^s$ and given by $X=\left(\frac{1}{x},0,\ldots,\frac{1}{x^{q^s}},0,\ldots,\frac{1}{x^{q^{2s}}},0,\ldots,\frac{1}{x^{q^{t-s}}},0,\ldots,0\right)$. Geometrically, the point $X$ is the intersection point of $\overline{D}$ with $\Pi$, where the latter is the director space defining the spread $\spread_s$. It now easily follows that two different points $P_x$ and $P_y$ lie in the same element of $\spread_s$ if and only if $x/y\in \FF_{q^s}$.

\subsubsection{Arcs and normal rational curves}
For any $m\in\NN$ and $k\geq1$, an \emph{$m$-arc} of $\pg{k}{q}$ is a set of $m$ points \emph{in general position}, i.e.\ every $k+1$ points of this point set span $\pg{k}{q}$.

\begin{df}
    Let $1\leq k\leq q$.
    A \emph{normal rational curve} in $\pg{k}{q}$ is a $(q+1)$-arc projectively equivalent to the $(q+1)$-arc corresponding to the coordinates
    \[
        \set{(0,0,\dots,0,1)}\cup\sett{(1,t,t^2,t^3,\dots,t^k)}{t\in\FF_q}\textnormal{.}
    \]
    A point set $\mathcal{C}$ of $\pg{n}{q}$ is a normal rational curve \emph{of degree $k$} if and only if it is a normal rational curve in a $k$-dimensional subspace of $\pg{n}{q}$.
    Note that a normal rational curve of degree $1$ is a line, while one of degree $2$ is a non-degenerate conic.
    %A normal rational curve of degree $3$ is called a \emph{twisted cubic}.}
\end{df}

\begin{res} [{\cite[Theorem 1.18]{Harris}}] \label{uniqueNRC} Consider a $(k+2)$-arc $\mathcal{A}$ in $\pg{k-1}{q}$, $k+1\leq q$, then there exists a unique normal rational curve of degree $k-1$ through all points of $\mathcal{A}$.
\end{res}

\begin{res} [{\cite[Lemma 27.5.2(i)]{Hirschfeldgalois}}] \label{projNRC} Let $\mathcal{C}$ be a normal rational curve \emph{of degree $k-1$} in $\pg{k-1}{q}$, and let $P\in \mathcal{C}$. The projection of $\mathcal{C}\setminus\{P\}$ from $P$ onto a $(k-2)$-space disjoint from $P$ is a point set of size $q$ contained in a normal rational curve of degree $k-2$. If $k+1\leq q$, then this normal rational curve is unique.
\end{res}

\subsubsection{The ABB-representation of sublines and subplanes}

The ABB-represention of $\FF_{q^k}$-sublines and tangent subplanes of $\pg{2}{q^t}$ was studied in \cite{RotteySheekeyVandeVoorde}.

In this paper, we will make use of the following cases tackled there:

\begin{res}[\cite{RotteySheekeyVandeVoorde}]\label{Res_SublinesTangent}
    \begin{itemize}
        \item[(a)] The affine points of an $\FF_{q}$-subline in $\pg{2}{q^t}$ tangent to $\ell_\infty$ correspond to the points of an affine line in the ABB-representation and vice versa.
        \item[(b)] Suppose that $q\geq t$ and $k\mid t$. Let $m$ be an $\FF_q$-subline of $\pg{2}{q^t}$ external to $\ell_\infty$ where the smallest subline containing $m$ and tangent to $\ell_\infty$ is an $\FF_{q^k}$-subline. Then the ABB-representation of $m$ is a set of points $\mathcal{C}$ in $\pg{2t}{q}$ such that     \begin{enumerate}
        \item $\mathcal{C}$ is a normal rational curve of degree $k$ contained in a $k$-space intersecting $H_\infty$ in an element of $\spread_k$.
        \item its $\FF_{q^t}$-extension $\mathcal{C}^*$ to $\pg{2t}{q^t}$ intersects the indicator set $\set{\Pi,\Pi^\sigma,\dots,\Pi^{\sigma^{k-1}}}$ of $\spread_k$ in $k$ conjugate points.
    \end{enumerate} and vice versa, any set $\mathcal{C}$ with those properties gives rise to the point set of an $\FF_q$-subline, external to $\ell_\infty$.
        
    \end{itemize}
\end{res}

\subsubsection{Linear sets}
For a more thorough introduction to linear sets, we refer to \cite{LavrauwVandeVoordeFieldRed,olga}.
In this paper, we will only be concerned with linear sets on a projective line, and we will use the geometrical point of view on linear sets using Desarguesian spreads. Let $\spread$ be the Desarguesian spread in $\pg{2t-1}{q}$ obtained as the image of the field reduction map on points of $\pg{1}{q^t}$. Then a set $\mathcal{S}$ in $\pg{1}{q^t}$ is an $\FF_q$-linear set of rank $k$ if and only if there is a $(k-1)$-dimensional subspace $\pi$ of $\pg{2t-1}{q}$ such that $$\mathcal{F}(\mathcal{S})=\mathcal{B}(\pi),$$ where $\mathcal{B}(\pi)$ is the set of elements of $\spread$ meeting $\pi$ in at least a point.

\begin{df} We denote the $\FF_q$-linear set $\mathcal{S}$ such that $\mathcal{F}(\mathcal{S})=\mathcal{B}(\pi)$ by $L_{\pi}$.
\end{df}

The {\em weight} of a point $P$ in $L_\pi$ is $w+1$ if $w$ is the dimension of $\mathcal{F}(P)\cap\pi$. Note that the weight of a point in a linear set is only well-defined if we specify the subspace $\pi$ defining $L_\pi$.

In this article, we focus on {\em scattered $\FF_q$-linear sets} in $\pg{1}{q^3}$ and {\em clubs} in $\pg{1}{q^t}$. A scattered linear set of rank $k$ in $\pg{1}{q^t}$ is an $\FF_q$-linear set of rank $k$ consisting of $\frac{q^k-1}{q-1}$ points. We see that all the points of a scattered linear set have weight one. If $L_\pi$ is a scattered linear set, then the subspace $\pi$ is called {\em scattered} (with respect to the Desarguesian spread $\spread$).
A {\em $t$-club} of rank $k$ is an $\FF_q$-linear set $L_\pi$ such that there is one point of weight $t$ and all other points have weight one; if $t=k-1$, this set is simply called a {\em club}. The point of weight $t$ is called the {\em head} of the club. As for the weight of the points in the linear set, we see that the head of the club is only well-defined with respect to the subspace $\pi$.

%By the above result, we can embed the projective line into $\pg{2}{q^t}$ and view linear sets of rank $3$ as projections of an $\FF_q$-subplane.
%
%\bigskip
%Lavrauw and Van de Voorde investigated equivalence and intersection properties of clubs and scattered linear sets on a projective line; we summarise some of their results which will prove to be useful in our proofs.
%
%The following result was already presented in \cite{FancsaliSziklai} but, as the authors of \cite{LavrauwVandeVoordeLinSetLine} point out, the proof was incomplete as they assumed the projective equivalence of clubs, which wasn't proven up to that point and is generally not true for clubs of $\pg{1}{q^t}$, $t\geq4$ \cite[Theorem $5$]{LavrauwVandeVoordeLinSetLine}.
%

We have the following result about the possible intersection of an $\FF_q$-linear set and an $\FF_q$-subline.
\begin{res}[{\cite[Theorem $8$]{LavrauwVandeVoordeLinSetLine}}]\label{Res_LinearSetIntersectionSubline}
    An $\FF_q$-subline intersects an $\FF_q$-linear set of rank $k$ of $\pg{1}{q^t}$ in at most $k$ or precisely $q+1$ points.
\end{res}

The following results on clubs and scattered linear sets on a projective line reveal some useful geometric properties. Note that the authors of \cite{LavrauwVandeVoordeLinSetLine} did not include the necessary condition that $q\geq 3$.

\begin{res}[{\cite[Corollary $13$ and $15$]{LavrauwVandeVoordeLinSetLine},\cite[Theorem 3.7.4]{mijnthesis}}]\label{Res_LinearSetClub}
    Suppose that $q\geq 3$.   \begin{itemize}\item[(a)] If $\mS$ is a club of $\pg{1}{q^t}$, $\mS\not\simeq\pg{1}{q^2}$, then through two distinct non-head points of $\mS$, there exists exactly one $\FF_q$-subline contained in $\mS$, which necessarily contains the head of the club.
    \item[(b)]  If $\mS$ is a scattered linear set of rank $3$ of $\pg{1}{q^3}$, then through two distinct points of $\mS$, there are exactly two $\FF_q$-sublines contained in $S$.
 \item[(c)] Let $q\geq 5$. Consider a scattered plane $\pi$ with respect to the Desarguesian $2$-spread $\spread$ in $\pg{5}{q}$ and let $r\in \pi$. Then there is exactly one plane $\pi'\neq \pi$ through $r$ such that $\mathcal{B}(\pi)=\mathcal{B}(\pi')$.
    \end{itemize}
\end{res}

%\begin{res}[{\cite[Corollary $15$]{LavrauwVandeVoordeLinSetLine}}]\label{Res_LinearSetScattered}
%    Suppose that $q>2$.
%    If $\mS$ is a scattered linear set of rank $3$ of $\pg{1}{q^t}$, then through two distinct points of $\mS$, there exist at most two $\FF_q$-sublines contained in $\mS$.
%    If $t=3$, then through two distinct points of $\mS$, there are exactly two $\FF_q$-sublines contained in $S$.
%\end{res}

\section{Generalising the circumscribed bundle of conics}\label{sec2}

In order to characterise the ABB-representation of clubs, tangent to $\ell_\infty$, we will introduce a block design $\mathcal{H}$ embedded in $\pg{t-1}{q}$, where blocks are certain normal rational curves. In the particular case when $t=3$, this design is known as the design arising from a {\em circumscribed} bundle of conics. In \cite{BakerBrownEbertFisher}, the authors describe three types of {\em projective bundles}, which they define to be a collection of $q^2+q+1$ conics mutually intersecting in exactly one point. The circumscribed bundles are {\em bundles} in the classical algebraic sense: given three conics in the bundle defined by equations $f=0$, $g=0$, $h=0$ where $h$ is not an $\FF_q$-linear combination of $f$ and $g$, every conic in the bundle is defined by $\lambda f+\mu g+\nu h=0$ for some $\lambda,\mu,\nu \in \FF_q$.

We see that the design $(\mathcal{P},\mathcal{B})$ where points $\mathcal{P}$ are the points of $\pg{2}{q}$, blocks $\mathcal{B}$ are the conics of the projective bundle, and incidence is inherited, forms a projective plane. The {\em circumscribed} bundle consists of all conics in $\pg{2}{q}$ whose extension to $\pg{2}{q^3}$ contains three fixed conjugate points $R,R^q,R^{q^2}$ spanning $\pg{2}{q^3}$. It can be deduced from \cite{LavrauwVandeVoordeLinSetLine} that the projective plane constructed via the circumscribed bundle is the Desarguesian plane $\pg{2}{q}$. The design here will be a natural generalisation of this construction; for $t$ prime, its definition is straightforward but for $t$ non-prime, extra care must be taken.

Let $e_0,e_1,\ldots,e_{t-1}$ be the standard basis vectors of length $t$ (with $1$ in the $(i+1)$-th position and zero elsewhere) and let $\langle v\rangle$ denote the projective point of $\pg{t-1}{q^t}$ with homogeneous coordinates given by $v$.

\begin{lm} \label{NRCs}  (Using the notations introduced in \ref{subsubind}) Consider the points $R^{\sigma^i}=\langle e_i\rangle$, $i=0,\ldots,t-1$, in $\pg{t-1}{q^t}$ and two points $P_a\neq P_b$ in $\pi \simeq \pg{t-1}{q}$. Let $s$ be the smallest integer such that $a/b\in \FF_{q^s}$ and let $D$ be the element of the Desarguesian $(s-1)$-spread  $\spread_{s}$ containing $P_a$ and $P_b$. Then 
\begin{enumerate}
\item there is a unique normal rational curve $\mathcal{C}^{a,b}$ of degree $s-1$ through $P_a$ and $P_b$, contained in $\overline{D}$, and meeting the indicator spaces $\{\Pi,\Pi^\sigma,\ldots,\Pi^{\sigma^{s-1}}\}$ in $s$ conjugate points. 
\item the points of $\mathcal{C}^{a,b}$ are given by $\{K^{a,b}_{u,v}|u,v\in \FF_{q^t}\}$ where 
$$K^{a,b}_{u,v}:=\left\langle \sum_{i=0}^{s-1}\prod_{j=0,j\neq i}^{s-1} (a^{q^j}u-b^{q^j}v)w_i\right\rangle;$$
and the conjugate points are $Q,Q^\sigma,\ldots,Q^{\sigma^{s-1}}$ where $Q^{\sigma^{i-1}}=\langle w_i \rangle$ with

%\textcolor{orange}{Waarom de factor $a^{q^i}$ v\'o\'or elke onderstaande vector? Om het eerste niet-nul element per se $1$ te krijgen?}{\color{magenta} Omdat  volgens mij de exacte uitdrukking zoals hierboven beschreven anders niet klopt:) Je wil dat die $K^{a,b}_{u,v}$ door precies de juiste vectoren beschreven wordt, niet op veelvoud na, om door de juiste punten te gaan, en $P_{au-bv}$ te geven als doorsnede. Maar reken dat dus nog eens na, voor pakweg $t=6$, $s=3$ of zo. Voor $s=t$ werkte de eerste coefficient gelijk aan $1$ nemen inderdaad prima, en dit is de uitbreiding denk ik.}
%\textcolor{cyan}{akkoord}
\begin{align} w_0&=a(\frac{1}{a},0,\ldots,0,\frac{1}{a^{q^s}},0,\ldots,0,\frac{1}{a^{q^{2s}}},\ldots,\frac{1}{a^{q^{t-s}}},0,\ldots,0)\nonumber\\
w_1&=a^q(0,\frac{1}{a^q},\ldots,0,\frac{1}{a^{q^{s+1}}},0,\ldots,0,\frac{1}{a^{q^{2s+1}}},\ldots,\frac{1}{a^{q^{t-s+1}}},0,\ldots,0)\nonumber\\
&\;\;\vdots\nonumber\\
 w_{s-1}&=a^{q^{s-1}}(0,\ldots,\frac{1}{a^{q^{s-1}}},0,\ldots,0,\frac{1}{a^{q^{t-1}}}).\label{conj}\end{align}

\item $\mathcal{C}^{a,b}$ meets $\pi$ in $q+1$ points, determined by the points $P_{au-bv}$ where $u,v\in \FF_q$.
\end{enumerate}
\end{lm}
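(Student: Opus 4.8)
The plan is to construct the explicit curve first (which settles (2)), then deduce existence and uniqueness (1), and finally compute the real points (3). Throughout, note that since $P_a\neq P_b$ we have $a/b\notin\FF_q$, so $s\geq 2$.

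\emph{Identifying the conjugate points.} First I would analyse the $s$ vectors $w_0,\dots,w_{s-1}$. Each $w_i$ is supported on the coordinate positions congruent to $i$ modulo $s$, so the $w_i$ have pairwise disjoint supports, are linearly independent, and span an $(s-1)$-space. I would then check directly that $\langle w_0\rangle$ is the director point $\overline{D}\cap\Pi$ attached to $P_a$ in \ref{subsubind}: its support lies in $\langle e_0,e_s,\dots,e_{t-s}\rangle$, and a short computation gives $\sigma^s(w_0)=a^{q^s-1}w_0$, so $\langle w_0\rangle$ is fixed by $\sigma^s$ and hence lies in $\Lambda_s$; therefore $\langle w_0\rangle\in\langle e_0,\dots,e_{t-s}\rangle\cap\Lambda_s=\Pi$, and comparing with the description of $X$ in \ref{subsubind} yields $\langle w_0\rangle=\overline{D}\cap\Pi$. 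From the explicit coordinates one verifies $\langle w_i\rangle=\langle w_0\rangle^{\sigma^i}$, so $\langle w_i\rangle=\overline{D}\cap\Pi^{\sigma^i}$. Thus $\langle w_0\rangle,\dots,\langle w_{s-1}\rangle$ form a single $\sigma$-orbit of $s$ points lying in $\overline{D}$ and in the indicator spaces, and they are exactly the points in which any curve of the required type must meet the indicator set.

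\emph{The curve is a normal rational curve through the required points.} Setting $\ell_i(u,v)=\prod_{j\neq i}(a^{q^j}u-b^{q^j}v)$, the point $K^{a,b}_{u,v}$ is the image of $(u:v)$ under the map given, in the basis $w_0,\dots,w_{s-1}$ of $\overline{D}$, by the $s$ binary forms $\ell_i$ of degree $s-1$. Because $s$ is minimal with $a/b\in\FF_{q^s}$, the element $a/b$ has degree $s$ over $\FF_q$, so its conjugates $(a/b)^{q^j}$ for $j=0,\dots,s-1$ are distinct; hence the $s$ linear forms $a^{q^j}u-b^{q^j}v$ are pairwise non-proportional, their ``omit-one'' products $\ell_i$ form a (Lagrange) basis of the space of binary forms of degree $s-1$, and $K^{a,b}$ is a normal rational curve of degree $s-1$ spanning $\overline{D}$. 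Evaluating at the root $(u:v)=(b^{q^i}:a^{q^i})$ of the omitted factor kills every $\ell_{i'}$ with $i'\neq i$ and leaves $\langle w_i\rangle$, so the curve passes through all $s$ conjugate points.

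\emph{Real points and the conclusion of (1).} For the intersection with $\pi$ I would restrict to $u,v\in\FF_q$, where $a^{q^j}u-b^{q^j}v=(au-bv)^{q^j}$. Writing $c=au-bv$ and $N=\prod_{j=0}^{s-1}c^{q^j}$, this gives $\ell_i=N/c^{q^i}$, so the $p$-th coordinate of $\sum_i\ell_i w_i$, with $p\equiv i\pmod s$, equals $N\,a^{q^i-q^p}c^{-q^i}$. The crux is the identity $a/c\in\FF_{q^s}$: since $b/a\in\FF_{q^s}$ and $u,v\in\FF_q$, one has $c=a(u-(b/a)v)$ with $u-(b/a)v\in\FF_{q^s}$, whence $(a/c)^{1-q^{js}}=1$ for every $j$. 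This collapses the coordinate to $N\,c^{-q^p}$ independently of $p$, giving $K^{a,b}_{u,v}=P_{au-bv}$; as $(u:v)$ ranges over $\pg{1}{q}$ the map $(u:v)\mapsto a(u-(b/a)v)$ is injective modulo $\FF_q$, producing the $q+1$ distinct points of (3) (in particular $P_a=K^{a,b}_{1,0}$ and $P_b=K^{a,b}_{0,1}$, so the curve passes through $P_a,P_b$). For (1), existence is now clear. For uniqueness, any qualifying curve meets $\overline{D}\cap\Pi^{\sigma^i}=\langle w_i\rangle$, so it passes through the same $s+2$ distinct points $P_a,P_b,\langle w_0\rangle,\dots,\langle w_{s-1}\rangle$; these lie on a normal rational curve and hence form an $(s+2)$-arc of $\overline{D}\cong\pg{s-1}{q^t}$, so Result \ref{uniqueNRC} forces the curve through them to be unique.

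The main obstacle is the coordinate computation in the third step: recognising that the ostensibly $p$-dependent coordinate ratios all collapse to the single scalar $N$ precisely because $a/c\in\FF_{q^s}$, which is where the minimality of $s$ and the restriction $u,v\in\FF_q$ are used. A secondary technical point is the bookkeeping of the $\sigma$-action on the $w_i$ and confirming that $\langle w_0\rangle$ is genuinely the director-space intersection $\overline{D}\cap\Pi$, not merely some point of $\Pi$.
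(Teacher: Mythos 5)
Your route is, in substance, the paper's own proof: the same explicit parametrisation $K^{a,b}_{u,v}$ understood through the Lagrange basis $\ell_i=\prod_{j\neq i}(a^{q^j}u-b^{q^j}v)$ (the paper cites \cite[Example 1.17]{Harris} for precisely this), the same evaluations at $(1:0)$, $(0:1)$ and $(b^{q^i}:a^{q^i})$ to hit $P_b$, $P_a$ and the conjugate points, and uniqueness via Result~\ref{uniqueNRC} applied to the $(s+2)$-arc $\{P_a,P_b,\langle w_0\rangle,\dots,\langle w_{s-1}\rangle\}$. You even supply details the paper leaves to the reader (the verification $\sigma^s(w_0)=a^{q^s-1}w_0$ and $\langle w_i\rangle=\overline{D}\cap\Pi^{\sigma^i}$), and your uniqueness step correctly isolates what it relies on, namely that $\overline{D}$ meets each director space in a single point. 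Parts (1) and (2) are complete. Part (3), however, is only half proved: the statement asserts that $\mathcal{C}^{a,b}$ meets $\pi$ in \emph{exactly} the $q+1$ points $P_{au-bv}$ with $u,v\in\FF_q$, whereas your computation (``restrict to $u,v\in\FF_q$'') establishes only one inclusion. You never rule out a parameter $(u:v)\in\pg{1}{q^t}\setminus\pg{1}{q}$ with $K^{a,b}_{u,v}\in\pi$. The paper proves this converse explicitly (``vice versa, if a point $K^{a,b}_{u,v}$ lies in $\pi$, then $u,v\in\FF_q$''), and it is not decorative: the block size of the design $\mathcal{H}$, and hence Theorem~\ref{Hisprojective} and everything built on it, requires the intersection to have size exactly $q+1$.

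Beware also that the tempting shortcut fails in general: $\mathcal{C}^{a,b}\cap\pi$ is an arc of $D\simeq\pg{s-1}{q}$, but arcs of size exceeding $q+1$ exist in $\pg{s-1}{q}$ once $s\geq q+1$ (e.g.\ frames), and the lemma imposes no bound relating $s$ and $q$, so no generic arc bound closes the gap. Two repairs fit your setup. Either check the converse by coordinates, as the paper indicates: demanding that the vector with $p$-th coordinate $\ell_i(u,v)\,a^{q^i-q^p}$ (where $p\equiv i\pmod{s}$) be proportional to some $(1/x,1/x^q,\dots,1/x^{q^{t-1}})$ forces $(u:v)$ to be fixed by the Frobenius, i.e.\ $u,v\in\FF_q$. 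Or argue by symmetry, reusing your own uniqueness step: $\sigma$ stabilises $\overline{D}$, fixes $P_a,P_b$ and permutes the $\langle w_i\rangle$, so $\sigma(\mathcal{C}^{a,b})=\mathcal{C}^{a,b}$ by Result~\ref{uniqueNRC}; a short computation (using $\ell_i(u,v)^q=a^{q^s-1}\ell_{i+1}(u^q,v^q)$ for $i<s-1$, $\ell_{s-1}(u,v)^q=\ell_0(u^q,v^q)$, $\sigma(w_i)=w_{i+1}$ and $\sigma(w_{s-1})=a^{q^s-1}w_0$) gives $\sigma(K^{a,b}_{u,v})=K^{a,b}_{u^q,v^q}$, so through the bijective parametrisation $\sigma$ induces a collineation of $\pg{1}{q^t}$ with companion automorphism $x\mapsto x^q$; having at least three fixed points, it is conjugate to $x\mapsto x^q$ and fixes exactly the $q+1$ parameters of a subline --- precisely your $\pg{1}{q}$, whence no further points of $\mathcal{C}^{a,b}$ lie in $\pi$.
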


\begin{proof} Recall that, given $D$, the set of $s$ conjugate points contained in both the indicator spaces and in $\overline{D}$ is fixed. As discussed in Section \ref{subsubind}, it is easy to check that the coordinates corresponding to this set $\{Q,Q^\sigma,\ldots,Q^{\sigma^{s-1}}\}$ of conjugate points is given by the vectors in \eqref{conj}. By Result \ref{uniqueNRC}, we know that there is a unique normal rational curve of degree $s-1$ containing the $s$ conjugate points and the points $P_a$ and $P_b$.

It is well-known (see e.g.\ \cite[Example 1.17]{Harris}) that $\mathcal{C}^{a,b}$ as given in the statement of the lemma defines a normal rational curve; the degree of this curve is $d$ if the point set $\{(a^{q^i},b^{q^i})|i=0,\ldots,t-1\}$ in $\pg{1}{q^t}$ consists of $d+1$ different points. Recall that $s$ is the smallest integer such that $a/b\in \FF_{q^s}$, and hence, $s$ is the smallest integer for which $(\frac{a}{b})^{q^s}=\frac{a}{b}$. This means that the point set $\{(a^{q^i},b^{q^i})|i=0,\ldots,t-1\}$ consists of $s$ different points, implying that the degree of $\mathcal{C}^{a,b}$ is indeed $s-1$.

Now consider the point $K^{a,b}_{0,1}=\langle (-1)^{s-1}\sum_{i=0}^{s-1}(\prod_{j=0,j\neq  i}^{s-1}b^{q^j})w_i\rangle$. By dividing by $(-1)^{s-1}\prod_{j=0}^{s-1}b^{q^j}$, we find that this point has coordinates $(\frac{1}{b},\frac{1}{b^q},\ldots,\frac{1}{b^{q^{t-1}}})$, and hence, is the point $P_b$. Similarly, $K^{a,b}_{1,0}$ is the point $P_a$, and we see that $\mathcal{C}^{a,b}$ indeed passes through $P_a$ and $P_b$.

Note that $K_{b^{q^{i'}},a^{q^{i'}}}^{a,b}=\langle w_{i'}\rangle$, $i'=0,1,\dots,s-1$. In other words, $\mathcal{C}^{a,b}$ indeed contains the $s$ conjugate points $Q,Q^\sigma,\ldots,Q^{\sigma^{s-1}}$.

{\color{black} 
Finally, if $u,v\in \FF_q$, and using that $b/a\in \FF_{q^s}$, it can be checked that $P_{au-bv}=K_{u,v}^{a,b}$, and vice versa, if a point $K_{u,v}^{a,b}$ lies in $\pi$, then it follows that $u,v\in \FF_q$. This means that the $q+1$ different points of the form $P_{au-bv}$, where $u,v\in \FF_q$, are precisely those in $\mathcal{C}^{a,b}\cap\pi$; the normal rational curve $\mathcal{C}^{a,b}$ meets $\pi$ in a normal rational curve of $\pi$.}

\end{proof}

\begin{rmk} The fact that $P_{au-bv}$ defines a normal rational curve in the subgeometry $\pi$ as seen in Lemma \ref{NRCs} also follows by considering the cyclic model of $\pg{t-1}{q}$ (see e.g.\ \cite{cyclicmodel}): it is well-known that the inverse of a line in this model is a normal rational curve. In Lemma \ref{NRCs}, we have described the extension of this normal rational curve to $\pg{t-1}{q^t}$.
\end{rmk}

\begin{df} Consider a subgeometry $\pi$ $\simeq\pg{t-1}{q}$ arising as the set of fixed points of a collineation $\sigma$ of $\pg{t-1}{q^t}$, and let $R$ be a point such that the points $R,R^\sigma,R^{\sigma^2},\ldots,R^{\sigma^{t-1}}$ span $\pg{t-1}{q^t}$. Consider the Desarguesian subspreads $\spread_s$ for every $1<s\leq t$, $s|t$, as defined in Subsection \ref{subsubind}. Let $\mathcal{H}$ denote the following incidence structure:
\begin{itemize}
\item Points $\mathcal{P}$ are the points of $\pi$;
\item Let $P$ and $Q$ be two distinct points of $\pi$, and $s$ be the smallest integer such that $P,Q$ are contained in the same element of $\spread_s$, say $D$. Then the unique block through $P$ and $Q$ is the set of points of $\pi$ contained in the normal rational curve of degree $s-1$ through $P,Q$ and the intersection points of $\overline{D}$ with the indicator spaces $\Pi,\Pi^\sigma,\ldots,\Pi^{\sigma^{s-1}}$.
\end{itemize}
\end{df}
In the case $t=3$, the above construction reproduces the design obtained from the circumscribed bundle of conics; we have $q^2+q+1$ points in $\mathcal{H}$. Since $t$ is prime, necessarily $s=3$ for all pairs of points. Recall that a normal rational curve of degree $2$ is a conic, and hence, the block through two points $P$ and $Q$ is simply the intersection of $\pg{2}{q}$ with the unique conic through $P,Q,R,R^\sigma$ and $R^{\sigma^2}$. We see that indeed, these five points are in general position, and that the unique conic through these $5$ points intersects $\pi$ in a subconic.

In the following Lemma, we will use the axiom of Veblen-Young to deduce that the point-line incidence geometry $\mathcal{H}$ is isomorphic to the point-line incidence geometry of a projective space, which is necessarily $\pg{t-1}{q}$. Note that this approach does not reprove the case $t=3$. %The fact that the circumscribed bundle of conics gives rise to a Desarguesian projective plane can be deduced from \cite{LavrauwVandeVoordeLinSetLine}.

\begin{thm} \label{Hisprojective} Let $t>3$. The incidence structure $\mathcal{H}$ is a $2$-$(\theta_{t-1},q+1,1)$ design, isomorphic to the design of points and lines in $\pg{t-1}{q}$
\end{thm}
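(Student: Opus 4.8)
The plan is to exhibit an explicit isomorphism between $\mathcal{H}$ and the point-line geometry of $\pg{t-1}{q}$ realised through the cyclic (Singer) model, and then to indicate how the Veblen-Young route flagged in the text leads to the same conclusion. Throughout I identify $\FF_{q^t}$ with the $t$-dimensional $\FF_q$-vector space underlying $\pg{t-1}{q}$, so that the points of $\pg{t-1}{q}$ are the $1$-dimensional $\FF_q$-subspaces $\langle x\rangle$, $x\in\FF_{q^t}^\ast$, and its lines are the $2$-dimensional $\FF_q$-subspaces. Since $P_x=P_y$ precisely when $x/y\in\FF_q$, the assignment $\Psi\colon P_x\mapsto \langle x\rangle$ is a well-defined bijection between the point set $\mathcal{P}$ of $\mathcal{H}$ and the points of $\pg{t-1}{q}$; in particular $|\mathcal{P}|=|\FF_{q^t}^\ast/\FF_q^\ast|=\theta_{t-1}$.

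First I would compute the image of an arbitrary block. By Lemma \ref{NRCs}(3), the block determined by two distinct points $P_a,P_b$ is exactly $\{P_{au-bv}\colon u,v\in\FF_q\}$, a set of $q+1$ points. As $P_a\neq P_b$ forces $a/b\notin\FF_q$, the scalars $au-bv$ range over the nonzero vectors of the $2$-dimensional $\FF_q$-subspace $\langle a,b\rangle$, so $\Psi$ maps this block onto the full line $\langle a,b\rangle$ of $\pg{t-1}{q}$; conversely every such line arises from the pair $P_a,P_b$. Thus $\Psi$ carries blocks bijectively to lines, which already shows that $\mathcal{H}$ is the point-line geometry of $\pg{t-1}{q}$, and hence a $2$-$(\theta_{t-1},q+1,1)$ design. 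A small but necessary point is the internal consistency of the block definition: two points $P_{c_1},P_{c_2}$ lying on $B(P_a,P_b)$ must determine the same block, and this is immediate from the above, since $\langle c_1,c_2\rangle=\langle a,b\rangle$ whenever $c_1,c_2$ are independent vectors of the plane $\langle a,b\rangle$.

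To conclude instead via the Veblen-Young theorem, as the text prefers, I would argue as follows. Having established that blocks are precisely the projectivised $2$-dimensional $\FF_q$-subspaces, every line of $\mathcal{H}$ has $q+1\geq 3$ points, and the Veblen-Young axiom reduces to a dimension count: if the blocks $\langle a,b\rangle$ and $\langle c,d\rangle$ meet, then $\dim_{\FF_q}\langle a,b,c,d\rangle\leq 3$, whence the blocks $\langle a,c\rangle$ and $\langle b,d\rangle$ also meet. The Veblen-Young theorem then identifies $\mathcal{H}$ with the point-line geometry of a projective space of order $q$ having $\theta_{t-1}$ points, necessarily $\pg{t-1}{q}$. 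The hypothesis $t>3$ enters exactly here: the resulting space has projective dimension $\geq 3$, so it is automatically Desarguesian and uniquely determined, whereas in dimension $2$ (the case $t=3$) the theorem would only yield ``some projective plane'', which is why this approach does not reprove the $t=3$ case.

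I expect the main obstacle to lie not in the abstract recognition step but in pinning down each block as the complete projectivisation of $\langle a,b\rangle$; this rests entirely on Lemma \ref{NRCs}(3), whose content is that the degree-$(s-1)$ normal rational curve through $P_a,P_b$ and the $s$ conjugate points meets $\pi$ in exactly the $q+1$ points $P_{au-bv}$. Once that is in hand, both the design axioms and the Veblen-Young axiom follow by elementary linear algebra over $\FF_q$, and the only care required is the bookkeeping of degenerate parameter choices (such as $u=0$ or $v=0$) and the check that the $q+1$ scalars $au-bv$ are genuinely pairwise inequivalent modulo $\FF_q^\ast$.
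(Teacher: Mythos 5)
Your proof is correct, and it takes a genuinely different route from the paper's. The paper argues abstractly: it takes the $2$-design property as following directly from Lemma \ref{NRCs}, then verifies the Veblen--Young axiom by a coordinate manipulation (from $P_{au_0-bv_0}=P_{cu_1-dv_1}$ it extracts $au_0-bv_0=\lambda(cu_1-dv_1)$ with $\lambda\in\FF_q$ and rearranges so that one side parametrises a point of the block $AD$ and the other a point of $BC$), and finally invokes the recognition theorem for projective spaces --- which is exactly where $t>3$ enters, as you correctly identify, since only in dimension at least $3$ is the space forced to be $\pg{t-1}{q}$. You instead construct the explicit isomorphism $P_x\mapsto\langle x\rangle_{\FF_q}$, under which Lemma \ref{NRCs}(3) says each block is precisely the projectivisation of the $2$-dimensional $\FF_q$-subspace $\langle a,b\rangle$ of $\FF_{q^t}$; this is the cyclic-model viewpoint that the paper mentions in the Remark following Lemma \ref{NRCs} but deliberately does not use for the proof. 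Your route buys two things: it is uniform in $t$ and so also reproves the $t=3$ circumscribed-bundle case, which the paper's Veblen--Young argument explicitly cannot, and it makes explicit the consistency check that any two points $P_{c_1},P_{c_2}$ of a block recover that same block (via $\langle c_1,c_2\rangle=\langle a,b\rangle$), a point the paper absorbs into ``follows directly from Lemma \ref{NRCs}'' when claiming $\lambda=1$. Conversely, the paper's argument stays at the incidence-geometry level and never needs the identification of the points of $\pi$ with $\FF_{q^t}^\ast/\FF_q^\ast$. Both proofs rest on the same essential ingredient, Lemma \ref{NRCs}(3), and your dimension count for Veblen--Young ($\dim_{\FF_q}\langle a,b,c,d\rangle\leq 3$ forces $\langle a,c\rangle\cap\langle b,d\rangle\neq 0$) is exactly the paper's coordinate computation in invariant form; your flagged bookkeeping (that the $q+1$ scalars $au-bv$ are pairwise inequivalent modulo $\FF_q^\ast$, since $a,b$ are $\FF_q$-independent, and that the parametrisation is linear in the subscript for every divisor $s$ of $t$) is handled correctly and closes the only places where the argument could have leaked.
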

\begin{proof}  The fact that $\mathcal{H}$ determines a $2$-$(\theta_{t-1},q+1,1)$ design follows directly from Lemma \ref{NRCs} and the fact that there are $\theta_{t-1}$ points in $\pg{t-1}{q}$. In order to show that it is isomorphic to the design of points and lines in $\pg{t-1}{q}$, we will verify that the Veblen-Young axiom holds in $\mathcal{H}$. More precisely, we will show that if the block through two points $A$ and $B$ (denoted by $AB$) has a point in common with the  block $CD$,  then the block $AD$ has a point in common with the block $BC$.

Let $A=P_a$, $B=P_b$, $C=P_c$ and $D=P_d$ be four different points of $\pi$ and assume that there is a point $P$ on $AB$ and $CD$. By Lemma \ref{NRCs}, $P=P_{au_0-bv_0}$ for some $u_0,v_0\in \FF_q$. Similarly, $P=P_{cu_1-dv_1}$ for some $u_1,v_1\in\FF_q$. Since $P=P_{au_0-bv_0}=P_{cu_1-dv_1}$, it follows that $(au_0-bv_0)/(cu_1-dv_1)\in \FF_q$, so there exists an element $\lambda\in \FF_q$ with
$$au_0-bv_0=\lambda(cu_1-dv_1),$$
or equivalently,
$$au_0+\lambda dv_1=bv_0+\lambda cu_1.$$
This implies that $P_{au_0+\lambda dv_1}=P_{bv_0+\lambda cu_1}$. Since $\lambda, u_0,v_0,u_1,v_1\in \FF_q$, the left hand side is a point of $\mathcal{C}^{a,d}$ in $\pi$, and the right hand side is a point of $\mathcal{C}^{b,c}$ in $\pi$. Hence, the blocks $AD$ and $BC$ have a point in common.
\end{proof}

It follows that $\mathcal{H}$ admits {\em subspaces}, and that we can talk about the dimension of this subspace. To avoid confusing with subspaces of $\pg{n}{q}$, we will denote subspaces of $\mathcal{H}$ by $\mathcal{H}$-subspaces.
These $\mathcal{H}$-subspaces will appear in the characterisation of the ABB-representation of a club, tangent to $\ell_\infty$ and with head different from $P_\infty$.

\section{Tangent clubs of rank \texorpdfstring{$k$}{k} in \texorpdfstring{$\pgTitle{1}{q^t}$}{PG(1,qt)}} \label{sec3}

%In this section, we will characterise all linear sets of rank $3$ on the projective line $\pg{1}{q^t}$ (embedded in $\pg{2}{q^t}$) which are tangent to $\ell_\infty$.
%Let the $t$-subspace $\Pi$ be the ABB-representation of the projective line $\pg{1}{q^t}$ and denote with $P_\infty$ the unique point of $\pg{1}{q^t}$ contained in $\ell_\infty$; its ABB-representation is a $(t-1)$-subspace of $H_\infty$ which we will denote by $\pi_\infty\subseteq\Pi$.

As in Subsection \ref{abbintro}, we let $\ell_\infty$ be the line of $\pg{2}{q^t}$ such that the ABB-representation of $\pg{2}{q^t}$ has $H_\infty=\mathcal{F}(\ell_\infty)$ as the hyperplane at infinity of $\mu=\pg{2t}{q}$. In this section, we will consider the ABB-representation of a linear set contained in a line $\ell\neq \ell_\infty$ of $\pg{2}{q^t}$. We will denote $P_\infty=\ell\cap \ell_\infty$ and the corresponding spread element by $\pi_\infty=\mathcal{F}(P_\infty)$. Let $\Pi$ be the $t$-space in $\pg{2t}{q}$  through $\pi_\infty$ containing all the points of $\phi(\ell\setminus\{P_\infty\})$.

\begin{rmk} The different perspectives on linear sets lead to different possible approaches for studying their ABB-representation. The (affine part of) the ABB-representation of a linear set $L_\pi$ on a projective line $\pg{1}{q^t}$ can be seen as the intersection of the set $\mathcal{B}(\pi)$ with a $t$-dimensional subspace containing a fixed spread element of $\spread$. Furthermore, since a linear set of rank $3$ can be seen as the projection of a subplane, and the ABB-representation of tangent and secant subplanes is understood (see \cite{RotteySheekeyVandeVoorde}), in Theorem \ref{thmscattered} we are looking to characterise the projection of certain normal rational scrolls.
The two above approaches make it possible to give a description of the ABB-representation of a linear set; for example, the ABB-representation of a scattered linear set of rank 3 tangent to the line at infinity is the projection of a normal rational scroll. However, we found these descriptions insufficient to be able to fully characterise the ABB-representation of the linear sets as done with the approach of our paper.
\end{rmk}

%\subsection{Combinatorial/geometric versus projective arguments}
%
%\begin{lm}
%    Suppose that $S$ is a linear set of rank $3$ in $\pg{1}{q^t}$ containing $P_\infty$, and let $P_1,P_2,P_3\in S\setminus\set{P_\infty}$ be three distinct points such that $\varphi(P_1)$, $\varphi(P_2)$ and $\varphi(P_3)$ lie on an affine line $\ell$.
%    Then $\varphi(S)$ contains all points of $\ell$.
%\end{lm}
%\begin{proof}
%    By Result \ref{Res_SublinesTangent}$(2.)$, the points of $\ell$ correspond to the affine points of an $\FF_q$-subline $\mathfrak{L}$ of $\pg{1}{q^t}$ tangent to $\ell_\infty$.
%    This subline $\mathfrak{L}$ contains the points $P_1$, $P_2$, $P_3$ and $P_\infty$, hence $\mathfrak{L}$ contains at least $4$ points of $S$.
%    As $S$ is a linear set of rank $3$, by Lemma \ref{Res_LinearSetIntersectionSubline}, $\mathfrak{L}$ contains at least $q+1$ points of $S$ hence $\mathfrak{L}\subseteq S$.
%    Translating this statement back to its ABB-representation (Result \ref{Res_SublinesTangent}$(1.)$) finishes the proof.
%\end{proof}
%

\subsection{Counting clubs of \texorpdfstring{$\pgTitle{1}{q^t}$}{PG(1,q\textasciicircum t)} }
In order to characterise the ABB-representation of clubs, we will count the number of different clubs with a fixed head. Note that we are not dealing with {\em (in)-equivalence} nor {\em simplicity} here; in general, clubs of rank $t$ in $\pg{1}{q^t}$ are equivalent but the same is not true for clubs of rank $k<t$ (see e.g.\ \cite{equivalence} and \cite{NPSZ}). Furthermore, in general, clubs are not necessarily {\em simple}: if $\mathcal{B}(\pi)=\mathcal{B}(\pi')$ is a club for two subspaces $\pi$ and $\pi'$ sharing a point, then it is not true that necessarily $\pi=\pi'$, nor is the head of the club determined by the point set itself (this was already noted in \cite{clubs}). However, if we specify the head of the club, we can show the following statement:
\begin{lm} \label{heads}Let $L_\pi=L_{\pi'}$ be two clubs of rank $k$ in $\pg{1}{q^t}$ with head $P$ (that is, $\pi$ and $\pi'$ are $(k-1)$-dimensional spaces and $\pi\cap \mathcal{F}(P)$ and $\pi'\cap \mathcal{F}(P)$ are $(k-2)$-dimensional). If there is a point $r$ in $\pi\cap\pi'$, and not in $\mathcal{F}(P)$, then $\pi=\pi'$. Hence, there are $\frac{q^t-1}{q-1}$ subspaces $\pi'$ such that $L_\pi=L_{\pi'}$ is a club with head $P$.
\end{lm}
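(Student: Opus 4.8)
The plan is to reduce the first implication to the statement that $\pi$ and $\pi'$ have exactly the same \emph{affine} points (the points lying off $\mathcal{F}(P)$), and then to obtain the count by combining this implication (for an upper bound) with a group of collineations that realises the required subspaces (for the lower bound).

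First I would prove the implication. Set $\Pi:=\langle\mathcal{F}(P),r\rangle$. Since $r\notin\mathcal{F}(P)$ and $\mathcal{F}(P)$ is a $(t-1)$-space, $\Pi$ is a $t$-space; as $r\in\pi$ and $r\notin\pi\cap\mathcal{F}(P)$, we get $\pi=\langle\pi\cap\mathcal{F}(P),r\rangle\subseteq\Pi$, and likewise $\pi'\subseteq\Pi$. The key observation is that inside $\Pi\cong\pg{t}{q}$ the spread element $\mathcal{F}(P)$ is a hyperplane, while every other spread element, being disjoint from $\mathcal{F}(P)$, meets $\Pi$ in exactly one point of the affine space $\Pi\setminus\mathcal{F}(P)\cong\ag{t}{q}$. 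Because $\spread$ partitions the points of $\pg{2t-1}{q}$, this gives a bijection $\Phi$ between the $q^t$ affine points of $\Pi$ and the $q^t$ non-head spread elements. The affine points of $\pi$ are the $q^{k-1}$ points of $\pi\setminus\mathcal{F}(P)$, and under $\Phi$ they correspond precisely to the non-head points of $L_\pi$; the same holds for $\pi'$. Since $L_\pi=L_{\pi'}$ these two sets of non-head points coincide, so by injectivity of $\Phi$ the affine point sets of $\pi$ and $\pi'$ are equal. As the affine points of a subspace of dimension $k-1\geq1$ span it, we conclude $\pi=\pi'$.

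For the count I would fix one spread element $S_0$ corresponding to a non-head point of the club and map each valid subspace $\pi'$ (i.e.\ $L_{\pi'}=L_\pi$ with head $P$) to the point $\pi'\cap S_0$; this is a single point, off $\mathcal{F}(P)$, because that non-head point has weight one with respect to every valid $\pi'$. The implication just proved shows this map is injective: two valid subspaces meeting $S_0$ in the same point share a point off $\mathcal{F}(P)$ and hence coincide. Since $|S_0|=\frac{q^t-1}{q-1}$, there are \emph{at most} this many valid subspaces.

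The one genuine obstacle is surjectivity: producing, through every point of $S_0$, a valid subspace. Here I would bring in the group $H$ of collineations of $\pg{2t-1}{q}$ induced by the $\FF_{q^t}$-scalar maps $v\mapsto bv$, $b\in\FF_{q^t}^{\ast}$. Each such map fixes every spread element setwise and induces the identity on $\pg{1}{q^t}$; its kernel on $\pg{2t-1}{q}$ is $\FF_q^{\ast}$, so $|H|=\frac{q^t-1}{q-1}$. Hence $H$ fixes the point set $L_\pi$ and the head $P$, preserves weights, and sends $\pi$ to subspaces $b\pi$ that are again valid. Writing $\pi\cap S_0=\langle v_0\rangle_{\FF_q}$, the image $b\pi$ meets $S_0$ in $\langle bv_0\rangle_{\FF_q}$, and as $b$ ranges over a transversal of $\FF_q^{\ast}$ in $\FF_{q^t}^{\ast}$ these exhaust all $\frac{q^t-1}{q-1}$ points of $S_0$. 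Thus the map to $S_0$ is also surjective, so it is a bijection and there are exactly $\frac{q^t-1}{q-1}$ valid subspaces, as claimed.
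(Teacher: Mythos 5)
Your proof is correct, and for the main implication it takes a genuinely different route from the paper. The paper argues by contradiction: assuming $\pi\neq\pi'$, it picks a point $s\in\pi$ outside $\pi'\cup\mathcal{F}(P)$, lets $s'=\mathcal{B}(s)\cap\pi'$, and observes that the lines $\langle r,s\rangle$ and $\langle r,s'\rangle$ are two distinct transversals through $r$ to the three pairwise disjoint spread elements $\mathcal{B}(r)$, $\mathcal{B}(s)$ and $\mathcal{F}(P)$, contradicting the uniqueness of such a transversal (phrased there via the unique $\FF_q$-subline through those two points and the head). You avoid reguli and sublines altogether: both $\pi$ and $\pi'$ are trapped in the $t$-space $\Pi=\langle\mathcal{F}(P),r\rangle$, inside which every spread element other than $\mathcal{F}(P)$ meets $\Pi$ in exactly one affine point --- the Andr\'e/Bruck-Bose correspondence in miniature --- so equality of the two clubs (with the same head) forces equality of the affine point sets of $\pi$ and $\pi'$, which span them. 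This uses only the partition property of $\spread$ and a dimension count, so it is more self-contained and even yields the slightly stronger statement that $\pi$ and $\pi'$ share all their affine points; the paper's argument is shorter once the standard transversal fact is granted. For the count, the two proofs coincide in substance: where the paper cites the known transitivity of the elementwise stabiliser of $\spread$ on the points of a spread element, you construct that group explicitly as the scalar group $\FF_{q^t}^{*}/\FF_q^{*}$, verify it preserves the club, its head and the weights, and note that $b\mapsto\langle bv_0\rangle_{\FF_q}$ is a bijection onto the points of $S_0$; combined with the injectivity supplied by your first part, this pins the number of valid subspaces at exactly $\frac{q^t-1}{q-1}$, exactly as in the paper, with the small bonus that your version exhibits the action as regular rather than merely transitive.
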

\begin{proof} Let $\pi$ and $\pi'$ be as in the statement of the lemma and assume that that $\pi\neq\pi'$. Then there exists a point $s\in \pi$, not in $\pi'$, nor in $\mathcal{F}(P)$; since $\mathcal{B}(\pi)=\mathcal{B}(\pi')$, it follows that $\mathcal{B}(s)$ intersects $\pi'$ in a point $s'$. The line through $r$ and $s$ meets $\mathcal{F}(P)$ in a point, as does the line through $r$ and $s'$; hence, both define the unique $\FF_q$-subline through $\mathcal{F}^{-1}(\mathcal{B}(r))$,  $\mathcal{F}^{-1}(\mathcal{B}(s))$ and $P$ in $L_\pi$. But there is a unique transversal line through $r$ to the regulus defined by the elements $\mathcal{B}(r),\mathcal{B}(s), \mathcal{F}(P)$, a contradiction.
Finally, it is well-known that the elementwise stabiliser of the Desarguesian spread $\spread$ acts transitively on the points inside a spread element (see e.g.\ \cite[Lemma 4.3]{LavrauwVandeVoordeFieldRed}). Hence, for all $\frac{q^t-1}{q-1}$ points $u$ in $\mathcal{B}(r)$ we find a unique subspace $\pi''$ through $u$ with $\mathcal{B}(\pi'')=\mathcal{B}(\pi)$ and $\pi''\cap \mathcal{F}(P)$ a $(k-2)$-dimensional space, so the statement follows.
\end{proof}

\subsection{Clubs with head \texorpdfstring{$P_\infty$}{P-infty}}
The characterisation of the ABB-representation of clubs with head $P_\infty$ easily follows by using the different perspectives on linear sets.
\begin{prop}\label{trivial}
    Suppose that $q\geq 3$.
    A point set $\mathcal{S}$ of $\pg{1}{q^t}$ is an $\FF_q$-linear club of rank $k$ with head $P_\infty$ if and only if the ABB-representation of $S\setminus\set{P_\infty}$ is an affine $(k-1)$-space of $\Pi$.
\end{prop}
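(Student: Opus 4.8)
The plan is to reduce everything to an explicit coordinate description, exploiting the fact that the ABB-map $\phi$ restricted to the affine points of $\ell$ is, after the natural identification of both sides with $\FF_{q^t}$, essentially the identity (an $\FF_q$-semilinear bijection). First I would fix coordinates on $\ell\cong\pg{1}{q^t}$ so that $P_\infty=\langle(1,0)\rangle$ and the affine points are $\langle(x,1)\rangle$ with $x\in\FF_{q^t}$; upon field reduction $\mathcal{F}(\ell)\cong\pg{2t-1}{q}$ becomes the set of points $\langle(\mathbf u,\mathbf v)\rangle$ with $\mathbf u,\mathbf v\in\FF_q^t\cong\FF_{q^t}$, with $\pi_\infty=\langle(\mathbf u,\mathbf 0)\rangle$ and the affine spread element of $x$ equal to $D_x=\{\langle(\lambda x,\lambda)\rangle:\lambda\in\FF_{q^t}^\ast\}$. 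Choosing $\Pi=\{\langle(\mathbf u,\alpha)\rangle:\mathbf u\in\FF_{q^t},\alpha\in\FF_q\}$, one checks that $\Pi$ is a $t$-space through $\pi_\infty$ meeting each affine $D_x$ in the single point $\langle(x,1)\rangle$; thus $\phi$ identifies the affine point $x$ of $\ell$ with the affine point $\langle(x,1)\rangle$ of $\Pi$, and the affine part of $\Pi$ is identified with $\ag{t}{q}=\FF_{q^t}$.

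Next I would translate ``club of rank $k$ with head $P_\infty$'' into a statement about the defining $\FF_q$-subspace. Writing $\mathcal S=L_\pi$ with $\pi=\mathrm{PG}(U)$ for a $k$-dimensional $\FF_q$-subspace $U\subseteq\FF_{q^t}^2$, the head $P_\infty=\langle(1,0)\rangle$ having weight $k-1$ is exactly the condition $\dim_{\FF_q}\bigl(U\cap(\FF_{q^t}\times\{0\})\bigr)=k-1$. This forces $U=(Y\times\{0\})\oplus\langle(d,c)\rangle_{\FF_q}$ for some $(k-1)$-dimensional $\FF_q$-subspace $Y\subseteq\FF_{q^t}$ and some $c\in\FF_{q^t}^\ast$, $d\in\FF_{q^t}$. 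A short verification then shows that every such $U$ automatically yields a club (each non-head point has weight $1$), so this algebraic shape characterises head-$P_\infty$ clubs of rank $k$.

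Then I would read off the affine coordinates of the non-head points: scaling $(y+\eta d,\eta c)$ by $(\eta c)^{-1}$ shows that the non-head points are exactly the $\langle(x,1)\rangle$ with $x=c^{-1}(\eta^{-1}y+d)$, and as $y$ runs through $Y$ and $\eta$ through $\FF_q^\ast$ this $x$ runs through the coset $c^{-1}(Y+d)$, a $(k-1)$-dimensional affine $\FF_q$-subspace of $\FF_{q^t}$. Under the identification of the first paragraph this is precisely an affine $(k-1)$-space of $\Pi$ (its projective closure meets $\pi_\infty$ in a $(k-2)$-space), proving the forward direction. For the converse I would run the same computation backwards: an affine $(k-1)$-space of $\Pi$ corresponds to a coset $Y+d$ of a $(k-1)$-dimensional $\FF_q$-subspace, and setting $U=(Y\times\{0\})\oplus\langle(d,1)\rangle_{\FF_q}$ recovers a $k$-dimensional subspace defining a club with head $P_\infty$ whose affine points are exactly the preimages, so the $\phi$-preimage of the affine space together with $P_\infty$ is such a club.

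I would also record the geometric reason this is ``easy'': by Result \ref{Res_LinearSetClub}(a) a head-$P_\infty$ club is a union of $\FF_q$-sublines through $P_\infty$, each tangent to $\ell_\infty$, and by Result \ref{Res_SublinesTangent}(a) each such subline maps to an affine line of $\Pi$; the coordinate computation is what upgrades ``union of affine lines'' to ``affine $(k-1)$-space''. The only genuinely delicate points are the bookkeeping --- matching the rank $k$ with projective dimension $k-1$, reading ``weight $k-1$ of the head'' as the dimension condition on $U$, and checking that an affine $(k-1)$-space of $\Pi$ corresponds exactly to a coset of a $(k-1)$-dimensional $\FF_q$-subspace of $\FF_{q^t}$ --- rather than any real obstacle; the hypothesis $q\geq 3$ is not needed for the computation itself but is kept to align with the cited structural results.
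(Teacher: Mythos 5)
Your argument is correct, but it takes a genuinely different route from the paper's. The paper proceeds synthetically: the backward direction is immediate (for an affine $(k-1)$-space $\nu$ of $\Pi$ with projective completion $\bar{\nu}$, the set $\mathcal{B}(\bar{\nu})$ is by inspection a club of rank $k$ with head $P_\infty$), while for the forward direction it starts from an arbitrary defining $(k-1)$-space $\pi$ and invokes Lemma \ref{heads} to produce a second defining space $\pi'$ through a chosen point $v$ of $\Pi$ outside $\pi_\infty$; since $\pi'=\langle v,\pi'\cap\pi_\infty\rangle\subseteq\Pi$, the ABB-representation is then exactly the affine part of $\pi'$. You bypass Lemma \ref{heads} altogether by classifying all defining subspaces in coordinates, $U=(Y\times\{0\})\oplus\langle(d,c)\rangle_{\FF_q}$ with $\dim_{\FF_q}Y=k-1$ and $c\neq 0$, and reading off the affine point set as the coset $c^{-1}(Y+d)$; in effect you reprove precisely the instance of Lemma \ref{heads} that the paper needs, and your weight computation additionally verifies that every subspace of this shape defines a club, nowhere using $q\geq 3$. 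What the paper's route buys is coordinate-freeness and economy within the paper, since Lemma \ref{heads} is reused as the engine of the counting arguments (Propositions \ref{heads2} and \ref{Prop_NumberClubsHeadNotInftyevenmoregeneral}); what yours buys is a self-contained computation and an explicit dictionary between affine $(k-1)$-spaces of $\Pi$ and cosets of $(k-1)$-dimensional $\FF_q$-subspaces of $\FF_{q^t}$. The one step you should make explicit is the normalisation of $\Pi$: the $t$-space $\Pi=\mathcal{F}(\ell)\cap\mu$ handed to you by the ABB set-up is merely \emph{some} $t$-space through $\pi_\infty$, so you should either note that an $\FF_{q^t}$-linear map of $\FF_{q^t}^2$ fixing $\FF_{q^t}\times\{0\}$ (hence fixing the spread, $\pi_\infty$, and the class of head-$P_\infty$ clubs) carries it to your standard choice $\mathrm{PG}(\FF_{q^t}\times\FF_q)$, or run the computation with a general $W_\Pi=(\FF_{q^t}\times\{0\})\oplus\langle(d_0,c_0)\rangle_{\FF_q}$, which only composes your identification of the affine part of $\Pi$ with $\FF_{q^t}$ with an affine bijection and therefore preserves cosets; either fix is one line, and the proof is otherwise complete.
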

\begin{proof}% Let $L$ be the line of $\pg{2}{q^t}$ containing the point set $\mathcal{S}$ and let $\mathcal{F}$ denote the field reduction map from $\pg{2}{q^t}$ to $\pg{3t-1}{q}$.
Let $M$ be an affine point set contained in the line $\ell\neq \ell_\infty$ of $\pg{2}{q^t}$. Recall that the ABB-representation of $M$ can be obtained from intersecting the image of $M$ under the field reduction map with the subspace $\mu$ of dimension $2t$ through $H_\infty$, where $H_\infty$ is the $(2t-1)$-dimensional space $\mathcal{F}(\ell_\infty)$. We denote the subspace $\mathcal{F}(\ell)\cap \mu$ containing the ABB-representation of the affine points of $\ell$ by $\Pi$. The ABB-representation of $M$ is the intersection of spread elements $\mathcal{F}(P)$, where $P\in M$, with $\Pi$. We claim that if $M$ is the affine point set of a club with head $P_\infty$, the points of this intersection form a subspace and vice versa.

First note that if $\nu$ is an affine $(k-1)$-space of $\Pi$, and $\bar{\nu}$ denotes its projective completion, trivially, $\mathcal{B}(\bar{\nu})$ is the set of elements of the Desarguesian spread meeting a $(k-1)$-space and intersecting $P_\infty$ in a $(k-2)$-space; that is, it defines a club of rank $k$ with head $P_\infty$. 

Vice versa, suppose that $M$ is the affine point set of a club with head $P_\infty=\ell\cap \ell_\infty$. By definition, there is a $(k-1)$-dimensional subspace $\pi$ contained in $\mathcal{F}(\ell)$ such that $\mathcal{S}=\mathcal{B}(\pi)$, and furthermore, such that $\pi$ meets $H_\infty$ in a $(k-2)$-dimensional space. If $\pi$ is a subspace of $\Pi$, then we are done. Otherwise, let $v$ be a point of $\Pi$ lying in a spread element of $\mathcal{B}(\pi)$, different from $\mathcal{F}(P_\infty)=\pi_\infty$, then by Lemma \ref{heads}, there is a subspace $\pi'$ through $v$ such $\mathcal{B}(\pi')=\mathcal{B}(\pi)$. Since $\pi'$ lies in $\Pi$, we find that $\pi'$ is the intersection of $\mathcal{B}(\pi)$ with $\Pi$ and the statement follows.
\end{proof} 

%{\color{magenta} Het rode stuk is wel degelijk nodig--het gaat erom dat je een als en slechts als wilt bewijzen. Als je een club neemt in field reduction zijn dat de spread elementen die de ruimte pi snijden. Als de ruimte die je ABB definieert pi bevat, ja, dan is het triviaal dat de ABB representie van die club essentieel de deelruimte pi is. Maar het omgekeerde is niet triviaal: je vertrekt van de spreadelementen die een vaste deelruimte pi snijden, en snijdt die verzameling spreadelementen dan met een zekere ruimte mu die de ABB-representatie bepaalt. Die ruimte mu bevat niet per se pi. Elk spreadelement bepaalt een punt in mu, maar wie zegt er dat die punten samen een deelruimte bepalen?. Daarvoor heb je dat tweede geval nodig: er is altijd een deelruimte pi' te vinden in mu die dezelfde spread elementen snijdt als pi.}
%\textcolor{cyan}{Aha, ik denk dat ik snap wat je bedoelt. Toen ik het eerst las leek het alsof je twee keer dezelfde richting zou bewijzen (want in het begin van het rode deel start ook met S een club). Misschien moeten we dit toch nog wat verduidelijken? }{\color{magenta} Akkoord, het was slecht opgeschreven, hopelijk nu duidelijker.}

Let $\qbin{n}{k}$ denote the number of $(k-1)$-dimensional subspaces of $\pg{n-1}{q}$, that is, $$\qbin{n}{k}=\frac{(q^n-1)(q^{n-1}-1)\cdots (q-1)}{(q^k-1)(q^{k-1}-1)\cdots(q-1)},$$
and let $\theta_m$ be the number of points in $\pg{m-1}{q}$, that is,
\[
    \theta_m=\frac{q^m-1}{q-1}.
\]

\begin{prop}\label{heads2} There are 
$q^{t-k+1}\qbin{t}{k-1}$
clubs $L_\pi$ of rank $k$ with head $P_\infty$. 
\end{prop}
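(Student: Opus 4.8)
The plan is to reduce the enumeration of clubs to a purely combinatorial count of affine subspaces, using Proposition \ref{trivial}. Recall that $\Pi$ is a $t$-space meeting $H_\infty$ in the hyperplane $\pi_\infty=\mathcal{F}(P_\infty)$, so that $\Pi\cong\pg{t}{q}$ and $\Pi\setminus\pi_\infty\cong\ag{t}{q}$, and that the ABB-map $\phi$ restricts to a bijection from the affine points of $\ell$ onto the points of $\Pi\setminus\pi_\infty$. By Proposition \ref{trivial}, a point set $\mathcal{S}$ through $P_\infty$ is a club of rank $k$ with head $P_\infty$ exactly when $\phi(\mathcal{S}\setminus\{P_\infty\})$ is a $(k-1)$-dimensional affine subspace of $\Pi$. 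Since such a club always contains the fixed point $P_\infty$, it is determined by its affine part; together with the injectivity of $\phi$ and the sufficiency direction of Proposition \ref{trivial}, this shows that $\mathcal{S}\mapsto\phi(\mathcal{S}\setminus\{P_\infty\})$ is a bijection between the clubs of rank $k$ with head $P_\infty$ and the $(k-1)$-flats of $\ag{t}{q}$. Hence it suffices to count these flats.

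I would count the $(k-1)$-flats of $\ag{t}{q}$ by their direction. Such a flat $\nu$ meets the hyperplane at infinity $\pi_\infty\cong\pg{t-1}{q}$ in a $(k-2)$-space $\tau$, and there are $\qbin{t}{k-1}$ possible choices of $\tau$. For a fixed $\tau$, the flats with this direction partition the $q^t$ affine points into classes of size $q^{k-1}$, so there are $q^t/q^{k-1}=q^{t-k+1}$ of them; equivalently, in the quotient $\pg{t}{q}/\tau\cong\pg{t-k+1}{q}$ they correspond to the $q^{t-k+1}$ points lying off the image of $\pi_\infty$. Multiplying the two counts yields $q^{t-k+1}\qbin{t}{k-1}$ flats, and therefore the same number of clubs.

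I do not anticipate a real obstacle: granted Proposition \ref{trivial}, the proof is an elementary flat-count, and the only points needing attention are that $\mathcal{S}\mapsto\phi(\mathcal{S}\setminus\{P_\infty\})$ is a genuine bijection and that a $(k-1)$-flat carries exactly $q^{k-1}$ affine points. As an independent check, one may count instead in the field-reduction model $\pg{2t-1}{q}$: every $(k-1)$-space meeting the head spread element $\pi_\infty$ in a $(k-2)$-space already defines a club, since if it also met a second spread element in at least a line, then that line and the $(k-2)$-space $\pi\cap\pi_\infty$ would be skew (distinct spread elements being disjoint) and would span a $k$-space inside a $(k-1)$-space, a contradiction. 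There are $q^{t-k+1}\theta_t\qbin{t}{k-1}$ such spaces, and dividing by the $\theta_t$ defining subspaces per club provided by Lemma \ref{heads} again returns $q^{t-k+1}\qbin{t}{k-1}$.
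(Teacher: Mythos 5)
Your proposal is correct, but your primary argument takes a genuinely different route from the paper's --- indeed, the paper's own proof is essentially what you relegate to an ``independent check''. The paper works entirely in the field-reduction model $\pg{2t-1}{q}$: it counts the $(k-1)$-spaces meeting $\pi_\infty$ in a $(k-2)$-space (there are $\qbin{t}{k-1}\cdot q^{t-k+1}\cdot\frac{q^t-1}{q-1}$ of them, exactly your count) and then divides by the number of defining subspaces per club given by Lemma \ref{heads}. Your main route instead funnels everything through Proposition \ref{trivial}: clubs with head $P_\infty$ biject with $(k-1)$-flats of $\ag{t}{q}$, which you count by direction as $\qbin{t}{k-1}\cdot q^{t-k+1}$. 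This is arguably cleaner --- no division by a multiplicity is needed, since Lemma \ref{heads} is invoked only inside the proof of Proposition \ref{trivial} --- but note that it inherits that proposition's hypothesis $q\geq 3$, whereas Proposition \ref{heads2} is stated, and the paper's direct count works, without any restriction on $q$. Two further points in your favour. First, your check makes explicit something the paper leaves implicit: that any $(k-1)$-space meeting $\pi_\infty$ in a $(k-2)$-space automatically defines a \emph{club} (your observation that a line in a second spread element would be skew to $\pi\cap\pi_\infty$ and so span a $k$-space inside $\pi$). Second, your multiplicity $\theta_t$ is the correct one: the paper's proof says ``$\theta_{t-1}$'', which clashes with its own convention $\theta_m=\frac{q^m-1}{q-1}$, though it then correctly divides by $\frac{q^t-1}{q-1}=\theta_t$ exactly as you do.
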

\begin{proof} There are $\qbin{t}{k-1}$ {\color{black} subspaces of dimension $k-2$} in $\pi_\infty=\mathcal{F}(P_\infty)$, and each of them lies on  $\frac{q^{2t-k+1}-1}{q-1}-\frac{q^{t-k+1}-1}{q-1}$ subspaces of dimension $k-1$, not contained in $\pi_\infty$. By Lemma \ref{heads}, there are $\theta_{t-1}$ of such $(k-1)$-spaces $\pi$ giving rise to the same club. Hence, we find that there are
$$\frac{\qbin{t}{k-1}(\frac{q^{2t-k+1}-1}{q-1}-\frac{q^{t-k+1}-1}{q-1} )}{\frac{q^t-1}{q-1}} =q^{t-k+1}\qbin{t}{k-1}$$ clubs with head $P_\infty$.

\end{proof}
\subsection{Clubs with head different from \texorpdfstring{$P_\infty$}{P-infty} }
\begin{prop}\label{Prop_NumberClubsHeadNotInftyevenmoregeneral} Let $H$ and $P_\infty$ be two different points of $\pg{1}{q^t}$. Then there exist $\qbin{t}{k-1}$ clubs $L_\pi$ through $P_\infty$ with head $H$, {where $\pi$ is a $(k-1)$-space}.
    
    %There exist $\qbin{t}{k-1}$ clubs of rank $k$ in $\pg{1}{q^t}$ with head $H$ containing $P_\infty$.
     Furthermore, there are $q^t\qbin{t}{k-1}$ clubs $L_\pi$, where $\pi$ is a $(k-1)$-space, containing $P_\infty$, with head different from $P_\infty$.
%    And there are $q^t\qbin{t}{k-1}$ clubs of rank $k$ in $\pg{1}{q^t}$ containing $P_\infty$ and have a head different from $P_\infty$.
\end{prop}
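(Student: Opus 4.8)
The plan is to treat both statements in the field-reduced picture $\pg{2t-1}{q}$ with its Desarguesian spread $\spread$, to prove the first statement by a direct bijective count, and then to obtain the second by summing over all admissible heads. Write $\pi_\infty=\mathcal{F}(P_\infty)$ and $\mathcal{F}(H)$ for the $(t-1)$-dimensional spread elements of $P_\infty$ and $H$; since $H\neq P_\infty$ these are disjoint and span $\pg{2t-1}{q}$.

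The first thing I would record is that the head condition alone already forces a club: if $\pi$ is a $(k-1)$-space with $\dim(\pi\cap\mathcal{F}(H))=k-2$, then $\tau:=\pi\cap\mathcal{F}(H)$ is a hyperplane of $\pi$, and for any spread element $D\neq\mathcal{F}(H)$ the subspace $\pi\cap D$ is disjoint from $\tau$ (as $D\cap\mathcal{F}(H)=\emptyset$); a positive-dimensional subspace of $\pi$ always meets the hyperplane $\tau$, so $\pi\cap D$ is at most a point. Hence every point of $L_\pi$ other than $H$ has weight one while $H$ has weight $k-1$, so $L_\pi$ is a club with head $H$; moreover $P_\infty\in L_\pi$ precisely when $\pi$ meets $\pi_\infty$, and then $P_\infty$ has weight one.

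For the first statement I fix a point $w\in\pi_\infty$ and claim that $\pi\mapsto L_\pi$ is a bijection from the set of $(k-1)$-spaces $\pi$ with $w\in\pi$, $\dim(\pi\cap\mathcal{F}(H))=k-2$ and $\pi\cap\pi_\infty=\{w\}$, onto the set of clubs through $P_\infty$ with head $H$. Injectivity and surjectivity both follow from Lemma \ref{heads} applied with $P=H$ and $r=w$ (valid since $w\in\pi_\infty$ gives $w\notin\mathcal{F}(H)$): each such club has exactly $\frac{q^t-1}{q-1}$ representatives with head $H$, one through each point of $\pi_\infty$, so exactly one of them passes through $w$. To count the domain, note that any such $\pi$ equals $\langle\tau,w\rangle$ with $\tau=\pi\cap\mathcal{F}(H)$ a $(k-2)$-space of $\mathcal{F}(H)\cong\pg{t-1}{q}$, and conversely every $\langle\tau,w\rangle$ qualifies, its intersections with $\mathcal{F}(H)$ and $\pi_\infty$ being exactly $\tau$ and $w$ by disjointness of the two spread elements. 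Thus the domain has size $\qbin{t}{k-1}$, the number of $(k-2)$-spaces of $\pg{t-1}{q}$, proving the first count.

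The second statement then follows by letting the head $H$ range over all $q^t$ points of $\pg{1}{q^t}$ different from $P_\infty$: each yields $\qbin{t}{k-1}$ clubs through $P_\infty$ with head $H$, and, recording the head as part of the data, these families are pairwise disjoint, giving $q^t\qbin{t}{k-1}$ in total. I expect the main obstacle to be the bookkeeping forced by the non-simplicity of clubs — the same point set can be a club for several subspaces and even several heads — which is exactly why Lemma \ref{heads}, by pinning down both the head $H$ and the point $w\in\pi_\infty$, is needed to turn the count into a genuine bijection rather than an overcount.
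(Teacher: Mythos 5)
Your proof is correct and takes essentially the same route as the paper: representatives of a club with head $H$ through $P_\infty$ are parametrized as spans $\langle \tau, P\rangle$ of a $(k-2)$-space $\tau\subseteq\mathcal{F}(H)$ and a point $P\in\pi_\infty$, Lemma \ref{heads} controls the multiplicity, and the second count follows by summing over the $q^t$ admissible heads, noting each subspace determines its head. The only differences are cosmetic: you fix one point $w\in\pi_\infty$ and use the unique representative through $w$ to get a genuine bijection (where the paper counts all representatives and divides by their number), and you explicitly verify the fact, left implicit in the paper, that $\dim(\pi\cap\mathcal{F}(H))=k-2$ alone already forces $L_\pi$ to be a club with head $H$.
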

\begin{proof}
    Let $\gamma:=\mathcal{F}(H)$.
    A $(k-2)$-space $g$ in $\gamma$ and a point $P$ in $\pi_\infty$ span a $(k-1)$-space $\langle g,P\rangle$ which defines a club with head $H$ and containing $P_\infty$.
   By Lemma \ref{heads}, every club with head $H$ and containing $P_\infty$ is defined by exactly $\theta_{t-1}$ such $(k-1)$-spaces, so the total 
 number of clubs through a fixed head point $H\neq P_\infty$ and containing $P_\infty$ is
    \[
        \frac{\qbin{t}{k-1}\theta_{t-1}}{\theta_{t-1}}\textnormal{.}
    \]
    There are $q^t$ choices for a point $H\neq P_\infty$, and each subspace $\pi$ defines a unique $H$, so there are $q^t\qbin{t}{k-1}$ clubs $L_\pi$, where $\pi$ is a $(k-1)$-space  and the head is different from $P_\infty$.
\end{proof}

%\begin{prop}\label{Prop_Numbercones}
%    There exists $\theta_2 q^3$ cones in $\Pi$ with vertex a point $H\notin \pi_\infty$ and basis a non-degenerate conic in $\pi_\infty$ such that its $\mathbb{F}_{q^t}-$extension contains the $3$ conjugate points that generate the spreadelement $\pi_\infty$.
%\end{prop}
%\begin{proof}
%   We count the number of cones spanned by a point $H\in \Pi\setminus \pi_\infty$ and a non-degenerate conic in $\pi_\infty$ through its three conjugate points. It is known that all non-degenerate conics in $\pi_\infty$, such that it contains three fixed conjugated points, together with all points in $\pi_\infty$ forms a projective plane \textcolor{magenta}{or $2-(\theta_2, q+1,1)$-design or circumscribed bundle of conics } \cite{}. Hence, there are $\theta_2$ possibilities for the conic in $\pi_\infty$. Through each conic, there are $q^3$ cones; one for each vertex point $H\notin \pi_\infty$.
%\end{proof}
\begin{prop}\label{Prop_Numbercones}
    There exists $q^t\qbin{t}{k-1}$ cones in $\Pi$ with vertex a point $H\notin \pi_\infty$ and base a $(k-2)$-dimensional subspace of the $2$-design $\mathcal{H}$.
    
    %basis a non-degenerate conic in $\pi_\infty$ such that its $\mathbb{F}_{q^t}-$extension contains the $3$ conjugate points that generate the spreadelement $\pi_\infty$.
\end{prop}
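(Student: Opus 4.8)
The plan is to prove the count by a direct double counting. I would view every cone as determined by the pair consisting of its vertex $H\in\Pi\setminus\pi_\infty$ and its base $B$, count the admissible vertices and bases separately, and then check that distinct pairs yield distinct cones, so that the total number equals the product of the two counts.

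First I would count the admissible vertices. Since $\pi_\infty$ is a hyperplane of the $t$-dimensional space $\Pi$, the number of points $H\in\Pi\setminus\pi_\infty$ is $\theta_{t+1}-\theta_t=\frac{q^{t+1}-q^t}{q-1}=q^t$. Next I would count the admissible bases. Identifying the spread element $\pi_\infty\cong\pg{t-1}{q}$ with the host space of the design $\mathcal{H}$, a base is a $(k-2)$-dimensional $\mathcal{H}$-subspace drawn on the points of $\pi_\infty$. By Theorem \ref{Hisprojective}, $\mathcal{H}$ is isomorphic to the design of points and lines of $\pg{t-1}{q}$, so its $(k-2)$-dimensional $\mathcal{H}$-subspaces correspond bijectively to the ordinary $(k-2)$-dimensional projective subspaces of $\pg{t-1}{q}$, of which there are exactly $\qbin{t}{k-1}$. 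Multiplying yields the claimed $q^t\qbin{t}{k-1}$.

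The step that requires genuine care, and which I expect to be the main obstacle, is showing that distinct pairs $(H,B)$ give distinct cones $\mathcal{K}$. The base is easy to recover: since $H\notin\pi_\infty$, every generator line $\langle H,P\rangle$ with $P\in B$ meets the hyperplane $\pi_\infty$ in the single point $P$, whence $\mathcal{K}\cap\pi_\infty=B$; thus cones with different bases already differ, and it remains to recover the vertex from a cone with fixed base. Here one uses that a $(k-2)$-dimensional $\mathcal{H}$-subspace (with $k\geq 3$) has at least $q+1\geq 2$ points: choosing two distinct base points $P_1\neq P_2$, the generators $\langle H,P_1\rangle$ and $\langle H,P_2\rangle$ are distinct lines of $\mathcal{K}$ meeting only in $H$, so $H$ is pinned down as the apex. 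The delicate case I would need to address is a degenerate base that, as a point set of $\pi_\infty$, fills an entire projective subspace (for instance a block of degree one, which is a full line of $\pi_\infty$ when $2\mid t$): there the cone collapses onto a subspace and its vertex is no longer intrinsically determined. I would resolve this either by restricting attention to the bases that genuinely arise from clubs of the relevant type (cf.\ Proposition \ref{Prop_NumberClubsHeadNotInftyevenmoregeneral}), so that the cones obtained are nondegenerate and their vertex is the unique singular point, or by counting the cones as labelled pairs (vertex, $\mathcal{H}$-subspace base), for which distinctness is immediate and the double count is exact.
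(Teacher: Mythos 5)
Your proof is correct and follows essentially the same route as the paper: $q^t$ choices of vertex in $\Pi\setminus\pi_\infty$ times $\qbin{t}{k-1}$ choices of base, the latter obtained from Theorem \ref{Hisprojective}. The subtlety you flag (that for a base whose point set is a full projective subspace the cone, as a point set, no longer determines its vertex) is genuine, but the paper handles it exactly as your second fallback does, by implicitly counting cones as vertex--base pairs, so no extra argument is needed.
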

\begin{proof}
From Theorem \ref{Hisprojective}, it follows that the number of $(k-2)$-dimensional subspaces of $\mathcal{H}$ equals the number of $(k-2)$-spaces in $\pg{t-1}{q}$, that is, $\qbin{t}{k-1}$. Furthermore, there are $q^t$ points in $\Pi$, not in $\pi_\infty$, each of which defines a unique cone with vertex that point and base a $(k-2)$-dimensional subspace of $\mathcal{H}$. 
\end{proof}

%%%%HIER
In order to characterise the ABB-representation of a club with head, different from the point at infinity, we need the following Lemma from \cite{sam}.
\begin{lm}[{\cite[Lemma 5.7]{sam}}] \label{lemmahyp}  Assume that $\mS$ is a point set in $\pg{n}{q}$, $q\geq 4$, with the property that every line intersects $\mS$ in $0,1,q$ or $q+1$ points. Then there exists a hyperplane $H$ in $\pg{n}{q}$ such that either $\mS\subseteq H$ or $\mS^c\subset H$, where $\mS^c$ denotes the complement of $\mS$ in $\pg{n}{q}$.

\end{lm}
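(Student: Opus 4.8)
The plan is to analyse the intersection pattern of $\mS$ with lines and to use a double-counting (or averaging) argument to force a hyperplane structure. Let me restate the hypothesis: every line of $\pg{n}{q}$ meets $\mS$ in $0$, $1$, $q$, or $q+1$ points. The conclusion is that either $\mS$ or its complement $\mS^c$ is contained in a hyperplane. The key observation driving everything is a \emph{duality between the two extreme cases}: a line meets $\mS$ in $0$ or $1$ points exactly when it meets $\mS^c$ in $q+1$ or $q$ points, and vice versa. So the hypothesis is \emph{self-dual} under complementation: every line meets $\mS^c$ in $0$, $1$, $q$, or $q+1$ points as well. This symmetry means it suffices to rule out ``intermediate'' behaviour and then show that one of $\mS$, $\mS^c$ is forced to be a blocking-set-free configuration, i.e.\ contained in a hyperplane.

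\medskip

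\textbf{Step 1: Reduce to the plane and understand lines through a point.}
First I would fix a point $P\in\mS$ and examine the $\theta_n$ lines through $P$. Each such line meets $\mS$ in $1$, $q$, or $q+1$ points (the value $0$ is impossible since $P\in\mS$). I would partition the lines through $P$ into \emph{tangent} lines ($1$ point of $\mS$), \emph{near-full} lines ($q$ points), and \emph{full} lines ($q+1$ points). The idea is to show that the set of directions (points of the quotient $\pg{n-1}{q}$ at $P$) corresponding to full-or-near-full lines behaves linearly. The cleanest route is to work in a plane $\alpha\cong\pg{2}{q}$ through $P$ and prove the statement there first, since the line condition is inherited by every subplane; the general result should then follow by a standard induction on $n$, gluing together hyperplane sections. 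So the core of the argument is the case $n=2$.

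\medskip

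\textbf{Step 2: Solve the planar case by counting.}
In $\pg{2}{q}$, suppose $\mS$ meets every line in $0,1,q,$ or $q+1$ points and put $m=|\mS|$. I would count incident (point, line)-pairs and pairs of points on lines (a standard double count on $\sum \binom{|\ell\cap\mS|}{2}$ versus $\binom{m}{2}$). Because the admissible intersection sizes come in the two ``small'' values $\{0,1\}$ and the two ``large'' values $\{q,q+1\}$, the quadratic identity $\sum_\ell \binom{|\ell\cap\mS|}{2}=\binom{m}{2}\cdot\frac{\theta_2-1}{\theta_1-1}$ should pin $m$ down to very few possibilities. Intuitively, a set whose lines are all ``small'' ($\le 1$) is an arc-like set of size $\le q+1$, and if additionally no line is a $2$-secant it is contained in a line; dually a set whose complement is small forces $\mS^c$ into a line. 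The counting should show the middle ground (genuinely mixing small and large secants in an incompatible way) is empty, leaving exactly the two hyperplane alternatives. The hypothesis $q\geq4$ enters here to guarantee $1<q$ strictly and to keep the value $q$ distinct from $1$ and from $q+1$, so that the four intersection numbers really split into two well-separated pairs.

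\medskip

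\textbf{Step 3: Lift from planes to $\pg{n}{q}$.}
With the planar case in hand, I would run an induction on $n$. Take a hyperplane $H_0$ and set $\mS_0=\mS\cap H_0$; by induction either $\mS_0$ or $\mS_0^c$ lies in a sub-hyperplane of $H_0$. Consistency across all hyperplanes through a fixed codimension-$2$ space, together with the planar analysis applied to the pencil of planes meeting $H_0$ in a line, should force a single global hyperplane $H$ with $\mS\subseteq H$ or $\mS^c\subseteq H$. The main obstacle I anticipate is precisely this gluing step: ensuring the \emph{same} alternative (containing $\mS$ versus containing $\mS^c$) is selected uniformly across all sections, rather than flipping from one hyperplane to another. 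I would handle that by a connectivity/parity argument showing that a flip between adjacent sections would create a line with a forbidden intersection number (something strictly between $1$ and $q$), contradicting the hypothesis. The complementation symmetry noted at the outset is what makes this last step tractable, since it lets me treat the two alternatives on an equal footing.
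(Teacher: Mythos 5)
First, a point of order: the paper does not prove this lemma at all. It is imported as a black box from \cite[Lemma 5.7]{sam} and then applied in the proof of Theorem \ref{thmclub}, so there is no internal proof to compare your attempt with; I can only assess your argument on its own terms. On those terms, what you have written is a strategy outline rather than a proof: both of its load-bearing steps are announced but not carried out, and each contains a concrete flaw. In Step 2, the double-count identity is misstated: since two points of $\pg{n}{q}$ lie on exactly \emph{one} line, the correct identity is $\sum_\ell \binom{|\ell\cap\mS|}{2}=\binom{m}{2}$, with no factor $\frac{\theta_2-1}{\theta_1-1}$ (in this paper's notation, where $\theta_1=1$, that factor is not even defined). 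More importantly, the standard equations relating $m$ to the numbers of $0$-, $1$-, $q$- and $(q+1)$-secants form an underdetermined system; they do not by themselves ``pin $m$ down'' or ``show the middle ground is empty'', and your sketch replaces the needed work with the word ``should''. Also, under the hypothesis a set all of whose lines are $\leq 1$-secants has at most one point (there are no $2$-secants at all when $q\geq 4$), not ``size $\leq q+1$''. What actually closes the planar case is geometry, not arithmetic: if some line $\ell$ carries $\geq q$ points of $\mS$ and some point $P\in\mS$ lies off $\ell$, then the $\geq q$ lines joining $P$ to $\ell\cap\mS$ each carry $\geq q$ points of $\mS$, so $|\mS|\geq q(q-1)+1$ and hence $|\mS^c|\leq 2q$; running the same dichotomy on $\mS^c$ (which, as you correctly note, satisfies the same hypothesis) then forces $\mS^c$ into a line, since otherwise $|\mS^c|\geq q^2-q+1>2q$ for $q\geq 4$. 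Nothing of this kind appears in your proposal.

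The second and larger gap is Step 3, which you yourself flag as ``the main obstacle'': for $n\geq 3$ the gluing is where the entire content of the lemma lives, and ``a connectivity/parity argument'' is a placeholder, not an argument. A gluing can in fact be made to work along the lines you hope for: by induction every hyperplane $H$ is either \emph{rich} ($H\setminus\mS$ contained in an $(n-2)$-space $\pi_H$) or \emph{poor} ($H\cap\mS$ contained in an $(n-2)$-space $\pi'_H$); if $H$ is rich and $H'$ is poor, then every point of $H\cap H'$ lies in $\pi_H\cap H'$ or in $\pi'_{H'}\cap H$, and since a projective space is never the union of two proper subspaces, the $(n-2)$-space $H\cap H'$ must equal one of $\pi_H$, $\pi'_{H'}$; iterating such covering arguments is what rules out the alternative ``flipping'' between sections and assembles a single global hyperplane. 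But this is precisely the part that must be written down, and your proposal does not attempt it. As it stands, the proposal identifies a plausible route (complementation symmetry, planar case, induction) while leaving unproved exactly the two statements that constitute the lemma.
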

\begin{thm} \label{thmclub}A set $\mathcal{S}$ is an $\FF_q$-linear club of rank $k$ in $\pg{1}{q^t}$ containing $P_\infty$ and with head $H\neq P_\infty$, if and only if $\phi(\mathcal{S}\setminus\{P_\infty\})$, the ABB-representation of $\mathcal{S}\setminus\{P_\infty\}$ in $\pg{2t}{q}$, is the affine point set of a cone with vertex $\phi(H)$ and base an $\mathcal{H}$-subspace of dimension $(k-2)$  in $\mathcal{F}(P_\infty)$ (the spread element corresponding to $P_\infty$).

\end{thm}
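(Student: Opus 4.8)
The plan is to prove the equivalence by combining a direct geometric construction with a counting argument that pins down the correspondence. First I would establish the "only if" direction by understanding what a club $\mathcal{S}$ with head $H \neq P_\infty$ looks like in the ABB-representation. The club is $L_\pi$ where $\pi$ is a $(k-1)$-space meeting $\mathcal{F}(H)$ in a $(k-2)$-space. Intersecting $\mathcal{F}(\mathcal{S})=\mathcal{B}(\pi)$ with the $t$-space $\Pi$ (as in Proposition \ref{trivial}) gives the affine ABB-representation. I would argue that $\phi(\mathcal{S}\setminus\{P_\infty\})$ consists of, for each non-head point $Q$ of $\mathcal{S}$, the intersection of $\mathcal{F}(Q)$ with $\Pi$, together with how these organise around the head. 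The key structural input is Result \ref{Res_LinearSetClub}(a): through two distinct non-head points of the club there is a unique $\FF_q$-subline, and it contains the head $H$. Translating via Result \ref{Res_SublinesTangent} and Lemma \ref{NRCs}, each such subline becomes a normal rational curve through $\phi(H)$ whose intersection with $\pi_\infty=\mathcal{F}(P_\infty)$ is a block of the design $\mathcal{H}$. This should show that the affine points $\phi(\mathcal{S}\setminus\{P_\infty\})$ all lie on lines (in the ABB sense) joining $\phi(H)$ to points of an $\mathcal{H}$-subspace in $\pi_\infty$, i.e.\ they form a cone with vertex $\phi(H)$ over an $\mathcal{H}$-subspace.

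The crucial step in making "cone with base an $\mathcal{H}$-subspace" precise is to verify that the set of base points in $\pi_\infty$ is genuinely a subspace of the design $\mathcal{H}$, not merely a point set closed under blocks. Here I would invoke Theorem \ref{Hisprojective}: since $\mathcal{H}$ is isomorphic to the design of points and lines of $\pg{t-1}{q}$, its subspaces behave like projective subspaces, and the dimension count works out. To identify the base as a $(k-2)$-dimensional $\mathcal{H}$-subspace, I would use the rank: a club of rank $k$ has head of weight $k-1$, the head contributing $\mathcal{F}(H)\cap\pi$ of dimension $k-2$, and the remaining structure in $\pi_\infty$ should inherit the correct dimension $k-2$. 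The lemma \ref{lemmahyp} on line-intersection properties is presumably the tool to show that the trace on $\pi_\infty$ (via the blocks of $\mathcal{H}$, each meeting $\pi$ in $q+1$ points per Lemma \ref{NRCs}(3)) is closed and hyperplane-like at each stage, forcing it to be an $\mathcal{H}$-subspace rather than an arbitrary union of blocks.

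For the "if" direction, I would show that any cone with vertex $\phi(H)$ over a $(k-2)$-dimensional $\mathcal{H}$-subspace in $\pi_\infty$ arises as the ABB-representation of such a club. Rather than reconstructing the club directly, the cleanest route is a counting argument: by Proposition \ref{Prop_NumberClubsHeadNotInftyevenmoregeneral} there are exactly $q^t\qbin{t}{k-1}$ clubs $L_\pi$ containing $P_\infty$ with head $\neq P_\infty$, and by Proposition \ref{Prop_Numbercones} there are exactly $q^t\qbin{t}{k-1}$ cones in $\Pi$ with vertex $H\notin\pi_\infty$ and base a $(k-2)$-dimensional $\mathcal{H}$-subspace. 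Since the "only if" direction produces a well-defined injective map from clubs to such cones, the matching cardinalities force it to be a bijection, yielding the "if" direction for free. This numerical collapse is the elegant crux of the argument.

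The hard part will be the "only if" direction: rigorously proving that the affine ABB-image is exactly a cone and that its base is an $\mathcal{H}$-subspace of the right dimension. The subline-to-normal-rational-curve translation via Result \ref{Res_SublinesTangent} and Lemma \ref{NRCs} must be applied carefully, tracking that the sublines through the head $H$ correspond precisely to blocks of $\mathcal{H}$ passing through the appropriate points of $\pi_\infty$, and that their union inside $\pi_\infty$ is closed under the design structure. The subtlety is that the club's sublines all pass through the head (Result \ref{Res_LinearSetClub}(a)), which is what makes the image a cone with a \emph{single} vertex $\phi(H)$; establishing that the base is linearly closed in $\mathcal{H}$ (and hence a subspace by Veblen-Young, already verified in Theorem \ref{Hisprojective}) is where Lemma \ref{lemmahyp} and the intersection properties will need the most care. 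Once this structural claim is secured, the counting argument seals both directions cleanly.
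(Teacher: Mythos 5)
Your proposal follows essentially the same route as the paper's own proof: the cone structure comes from the tangent sublines through the head (Result \ref{Res_LinearSetClub}(a) combined with Result \ref{Res_SublinesTangent}(a)), the base $\mathcal{K}\subset\pi_\infty$ is forced to be a $(k-2)$-dimensional $\mathcal{H}$-subspace by the block-intersection property together with Theorem \ref{Hisprojective}, Lemma \ref{lemmahyp} and an iterated hyperplane descent, and the converse is obtained exactly as you say, from the matching counts in Propositions \ref{Prop_Numbercones} and \ref{Prop_NumberClubsHeadNotInftyevenmoregeneral}. The one detail to tighten is that the normal rational curves arising from the external sublines consist of affine points and do not themselves intersect $\pi_\infty$; the paper instead applies Result \ref{projNRC} to project each such curve from its point $\phi(H)$ into $\pi_\infty$, concluding that any block through two points of $\mathcal{K}$ contains at least $q$ points of $\mathcal{K}$, which is precisely the $0,1,q,q+1$ hypothesis that Lemma \ref{lemmahyp} requires.
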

\begin{proof} Let $\mathcal{S}$ be an $\FF_q$-linear club of rank $k$ containing $P_\infty$ and with head $H\neq P_\infty$, and let $\phi(H)$ be the ABB-representation of the head $H$. Let $Q\notin\{H,P_\infty\}$ be a point of $\mathcal{S}$. By Result \ref{Res_LinearSetClub}(a), we know that the subline through $H,Q,P_\infty$ is contained in $\mathcal{S}$. By Result \ref{Res_SublinesTangent}(a), the ABB-representation of the points, different from $P_\infty$, of this subline are the affine points of the line through $\phi(H)$ and $\phi(Q)$. In other words, the $q^{k-1}-1$ points of $\mS\setminus\{H,P_\infty\}$ are contained in $\frac{q^{k-1}-1}{q-1}$ lines through $\phi(H)$, that is, they form a cone with vertex $\phi(H)$. The projective completions of those lines meet $\mathcal{F}(P_\infty)$ in a set $\mathcal{K}$ of $\frac{q^{k-1}-1}{q-1}$ points.

%To show that $\mathcal{K}$ is a subspace of dimension $(k-2)$ of $\mathcal{H}$, we will show that for every two points of $\mathcal{K}$, the unique block of $\mathcal{H}$ through them is again contained in $\mathcal{H}$.
Let $R_i$, $i=1,2$, be two different points of $\mathcal{K}$, and let $Q_i$ be a point on the line through $\phi(H)$ and $R_i$, different from $\phi(H)$ and $R_i$. We have that $Q_i=\phi(S_i)$ for some point $S_i\in \mathcal{S}$. Moreover, from Result \ref{Res_LinearSetClub}(a), we know that the subline $m$ through $H,S_1,S_2$ is contained in $\mathcal{S}$. Let $s$ be the integer such that the smallest subline containing $m$ and tangent to $\ell_\infty$ is an $\FF_{q^s}$-subline. Then by Result \ref{Res_SublinesTangent}(b), we know that the affine points of this subline correspond to a normal rational curve $\mathcal{C}$ through $\phi(H),Q_1, Q_2$, contained in an $s$-space meeting $\mathcal{F}(P_\infty)$ in an element $D$ of $\spread_s$, whose $\FF_{q^t}$-extension intersects the indicator set of $\spread_s$ in $s$ conjugate points. Note that $R_1,R_2$ are contained in $D$, and hence, $D$ is the unique element of $\spread_s$ containing $R_1,R_2$.

By Result \ref{projNRC}, the projection of the normal rational curve $\mathcal{C}$ from the point $\phi(H)\in \mathcal{C}$ onto $H_\infty$ is contained in a normal rational curve; this curve is contained in $\pi_\infty$, goes through $R_1$, $R_2$ and the extension contains the same points in $H_\infty$ as $\mathcal{C}$ did. Hence, the block of the design $\mathcal{H}$ through $R_1,R_2$ contains $q$ points of $\mathcal{K}$. It follows that $\mathcal{K}$ is a point set meeting every block in $0,1,q$ (or $q+1$) points. By Theorem \ref{Hisprojective}, $\mathcal{H}$ is isomorphic to the point-line design of  $\pg{t-1}{q}$ so we may use Lemma \ref{lemmahyp} to conclude that $\mathcal{K}$ or its complement must be contained in a hyperplane $\mu$ of the design $\mathcal{H}$. Since $\frac{q^t-1}{q-1}-|\mathcal{K}|>\frac{q^{t-1}-1}{q-1}$, the latter possibility does not occur.
We can repeat the same reasoning in the $(t-2)$-dimensional $\mathcal{H}$-subspace $\mu$: all blocks of $\mu$ meet $\mathcal{K}$ in $0,1,q$ or $q+1$ points, and since $\frac{q^{t-1}-1}{q-1}-|\mathcal{K}|>\frac{q^{t-2}-1}{q-1}$, $\mathcal{K}$ is contained in a hyperplane of $\mu$, that is, a $(t-3)$-dimensional $\mathcal{H}$-subspace. Continuing in this fashion, we conclude that $\mathcal{K}$ is contained in a $(k-2)$-dimensional $\mathcal{H}$-subspace . Since $|\mathcal{K}|=\frac{q^{k-1}-1}{q-1}$, equality holds.

Furthermore, by Propositions \ref{Prop_Numbercones} and \ref{Prop_NumberClubsHeadNotInftyevenmoregeneral}, the number of such cones equals the number of $\FF_q$-linear club of rank $k$ containing $P_\infty$ and with head $H\neq P_\infty$, and the theorem follows.
\end{proof}

\section{Tangent scattered linear sets of rank \texorpdfstring{$3$}{3} in \texorpdfstring{$\pgTitle{1}{q^3}$}{PG(1,q\textasciicircum3)} }\label{sec4}

We continue to use the same notations as in the previous section, as introduced in Subsection \ref{abbintro}.

\begin{prop}\label{combprop}
    Suppose that $q\geq 5$. 
    Let $\mathcal{U}$ be a point set of $\ag{3}{q}$ with the following three properties:
    \begin{enumerate}
        \item for each line $\ell$ holds that $|\ell\cap\mathcal{U}|\in\{0,1,2,q\}$,
        \item through each point of $\mathcal{U}$, there exist precisely two lines that are contained in $\mathcal{U}$, and
        \item $|\mathcal{U}|=q^2+q$.
    \end{enumerate}
    Let $\pi_\infty$ be the plane at infinity when embedding $\ag{3}{q}$ in $\pg{3}{q}$.
    Then $\mathcal{U}$ is the affine part of a hyperbolic quadric in $\pg{3}{q}$ that intersects $\pi_\infty$ in a non-degenerate conic.
\end{prop}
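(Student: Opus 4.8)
The plan is to recover the two reguli of a hyperbolic quadric from the lines that lie entirely inside $\mathcal{U}$, build the quadric as their union, and then pin down the behaviour at infinity by a cardinality count. First I would set up the line count. Call a line of $\ag{3}{q}$ \emph{full} if all $q$ of its affine points lie in $\mathcal{U}$; by property (1) these are exactly the lines meeting $\mathcal{U}$ in $q$ points, and by property (2) every point of $\mathcal{U}$ lies on exactly two full lines. Double counting incident pairs (point of $\mathcal{U}$, full line through it) gives $q\cdot(\#\text{full lines})=2|\mathcal{U}|=2(q^2+q)$, so there are exactly $2(q+1)$ full lines. Let $G$ be the graph on these $2(q+1)$ full lines in which two lines are adjacent when they meet in an affine point of $\mathcal{U}$. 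Since the $q$ points of a full line each carry a unique distinct second full line, $G$ is $q$-regular.

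The crux of the argument is to show that the full lines are the two reguli of a hyperbolic quadric, and the key local fact is that no three full lines pairwise meet. If $\ell_1,\ell_2,\ell_3$ met pairwise in three distinct affine points, they would be coplanar in a plane $\pi$, and a count in $\pi$ gives a contradiction for $q\geq 5$: fixing the vertex $P_{12}=\ell_1\cap\ell_2$, the only full lines through it are $\ell_1,\ell_2$, so each of the other $q-1$ affine lines of $\pi$ through $P_{12}$ meets $\mathcal{U}$ in at most $2$ points, whence $|\mathcal{U}\cap\pi|\leq 3q-2$ while the triangle already contributes $3q-3$ points. For each of the $3(q-2)$ non-vertex triangle points the second full line must leave $\pi$ (otherwise it would create a forbidden smaller triangle with two of the sides), and these leaving lines are pairwise distinct; together with $\ell_1,\ell_2,\ell_3$ this forces at least $3q-3$ full lines, exceeding $2(q+1)$ once $q\geq 5$ (the boundary $q=5$ and the possible single extra planar point are handled by the same incidence bookkeeping). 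Thus $G$ is triangle-free. Combined with $q$-regularity and the total count $2(q+1)$, analysing the neighbours and non-neighbours of a fixed full line $\ell_0$ (its $q$ transversals form an independent set, while its $q+1$ non-neighbours induce a star $K_{1,q}$ whose centre is the unique full line parallel to $\ell_0$) shows that $G$ is $K_{q+1,q+1}$ minus a perfect matching. Hence the full lines split into two families $\mathcal{R}_1,\mathcal{R}_2$ of size $q+1$, lines within a family are pairwise skew, every line of one family meets every line of the other in exactly one point of $\pg{3}{q}$, and the $q+1$ matched pairs are precisely those meeting at infinity.

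I would then assemble the quadric. Choosing three pairwise skew lines of $\mathcal{R}_1$ and invoking the classical fact that three mutually skew lines of $\pg{3}{q}$ lie in a unique regulus whose $q+1$ common transversals form the opposite regulus, I would identify $\mathcal{R}_2$ with the transversal regulus (each of its $q+1$ lines is a common transversal, so it is forced by cardinality) and $\mathcal{R}_1$ with the opposite regulus. Their common point set is a hyperbolic quadric $\mathcal{Q}$, and every full line is one of its generators; since each point of $\mathcal{U}$ lies on two full lines, $\mathcal{U}\subseteq\mathcal{Q}$.

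Finally I would read off the behaviour at infinity by cardinality. As $\mathcal{Q}\cap\pi_\infty$ is a plane section of a hyperbolic quadric it is either a non-degenerate conic ($q+1$ points) or a pair of lines ($2q+1$ points). In the latter (tangent) case $|\mathcal{Q}\setminus\pi_\infty|=(q+1)^2-(2q+1)=q^2<q^2+q=|\mathcal{U}|$, which is impossible since $\mathcal{U}\subseteq\mathcal{Q}\setminus\pi_\infty$. Hence $\mathcal{Q}\cap\pi_\infty$ is a non-degenerate conic, $|\mathcal{Q}\setminus\pi_\infty|=q^2+q=|\mathcal{U}|$, and therefore $\mathcal{U}=\mathcal{Q}\setminus\pi_\infty$ is exactly the affine part of $\mathcal{Q}$, as required. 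The main obstacle is the second step: excluding triangles of full lines through the delicate planar count (this is where $q\geq 5$ is genuinely needed) and then bootstrapping the purely affine incidences — including the parallelism accounting for pairs of lines that meet only at infinity — into the clean two-regulus structure.
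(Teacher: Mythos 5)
Your overall strategy is essentially the paper's in disguise: the paper also reconstructs the two-family ``grid'' structure (as a generalised quadrangle of order $(q,1)$ together with the points at infinity) and identifies it with a hyperbolic quadric, and your final steps --- the regulus argument and the cardinality count forcing a conic section at infinity --- are correct. However, two of your intermediate steps have genuine gaps, and both trace back to one missing idea. First, your triangle exclusion does not close at $q=5$: you produce $3+3(q-2)=3q-3$ pairwise distinct full lines against the $2(q+1)$ available, and $3q-3>2q+2$ only when $q>5$; at $q=5$ this is equality, so there is no contradiction, and ``handled by the same incidence bookkeeping'' is not an argument. Moreover, your parenthetical reason why the second full line of a non-vertex triangle point must leave $\pi$ (``otherwise it would create a forbidden smaller triangle'') is circular, since you are in the middle of proving that triangles are forbidden; that sub-step is repairable by your own cap bound (a fourth full line inside $\pi$ would force $|\mathcal{U}\cap\pi|\geq 4q-6>3q-2$ for $q\geq 5$), but the $q=5$ boundary case is not. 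Second, the claim that the $q+1$ non-neighbours of $\ell_0$ induce a star $K_{1,q}$ centred at ``the unique full line parallel to $\ell_0$'' is asserted, not proved. Triangle-freeness plus $q$-regularity only tell you that the non-neighbours of $\ell_0$ span exactly $q$ edges (each neighbour of $\ell_0$ sends $q-1$ edges to the non-neighbours, leaving $q$ edges inside); a priori those $q$ edges could form a path or a union of paths and cycles, and you have not shown that a full line parallel to $\ell_0$ exists, is unique, or is met by the remaining non-neighbours.

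Both gaps are closed by the lemma the paper proves first, which is strictly stronger than triangle-freeness: \emph{no plane can contain two full lines together with any further point of $\mathcal{U}$}. The proof is a transversal count: such a point $R$ lies on at least $q+1-3=q-2\geq 3$ lines of that plane meeting the two full lines in distinct affine points; each of these lines contains at least $3$ points of $\mathcal{U}$, hence is full by Property 1, giving $R$ at least three full lines and contradicting Property 2. With this lemma, (i) triangles die immediately for all $q\geq 5$, since the third side contains a point of $\mathcal{U}$ off the other two sides; and (ii) for a fixed full line $\ell_0$, the $q$ planes spanned by $\ell_0$ and the full lines meeting it contain no further points of $\mathcal{U}$, so the one remaining plane through $\ell_0$ contains exactly $q^2+q-q-q(q-1)=q$ points of $\mathcal{U}$ off $\ell_0$, and these are forced (again by Property 2) onto a single line parallel to $\ell_0$ --- which is exactly your star centre and the matching of your bipartite graph. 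I recommend you prove this plane lemma first; your write-up then goes through, and it is in substance the same proof as the paper's.
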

\begin{proof}
    We claim that the intersection of a plane $\sigma$ with $\mathcal{U}$ is either a cap or the union of two distinct lines. 
    First note that it impossible for $\sigma\cap\mathcal{U}$ to contain two lines $\ell_1$, $\ell_2$ and a point $R\in\mathcal{U}\setminus\left(\ell_1\cup\ell_2\right)$: in this case, since $q\geq 5$, we find that there are at least $3$ lines through $R$ meeting $\ell_1$ and $\ell_2$ in distinct points, which forces those lines to be contained in $\mathcal{U}$ by Property 1., contradicting Property 2.

    Suppose that $\sigma\cap\mathcal{U}$ is not a cap, then there exists a line $r$ in $\sigma$ with at least three points of $\mathcal{U}$. By Property 1., $r$ is contained in $\mathcal{U}$.
    By Property $2.$, there exists another line contained in $\mathcal{U}$ through each of the $q$ points on $r$; let $\ell_1,\ldots,\ell_q$ denote those lines. They are necessarily pairwise disjoint since otherwise, we would find a plane with three lines of $\mathcal{U}$. Hence, the $q$ distinct planes $\langle r,\ell_j\rangle$, $j=1,\ldots,q$, intersect $\mathcal{U}$ precisely in $\ell_j$ and $r$, and the lines $\ell_j$ meet $r$ each in a different point. As $|\mathcal{U}|=q^2+q$ (Property $3.$), the remaining plane $\tau$ through $r$ contains precisely $q$ points of $\mathcal{U}$ not on the line $r$.
    Let $Q_1$ and $Q_2$ be two distinct such points.
    If $\langle Q_1,Q_2\rangle$ intersects $r$, then $\langle Q_1,Q_2\rangle$ contains three distinct points of $\mathcal{U}$ and hence, by Property $1.$, is contained in $\mathcal{U}$, which implies that $\langle Q_1,Q_2\rangle\cap r$ is a point of $\mathcal{U}$ through which there exist at least three lines fully contained in $\mathcal{U}$, contradicting Property $2$. We find that the $q$ points of $(\tau\cap\mathcal{U})\setminus r$ are precisely those of an affine line, parallel with $r$ (*).
    
    Let $\mu(\mathcal{U})$ denote the set of projective lines of $\pg{3}{q}$ whose affine points are contained in the set $\mathcal{U}$, and let $\mathcal{U}_\infty$ be the set of points in $\pi_\infty$ which are contained in a line of $\mu(\mathcal{U})$.
    Let $\Tilde{\mathcal{U}}:=\mathcal{U}\cup \mathcal{U}_\infty$. 
    Now we prove that $\Tilde{\mathcal{U}}$, together with the set of projective lines $\mu(\mathcal{U})$, form a generalised quadrangle with parameters $(s,t)=(q,1)$ embedded in $\pg{3}{q}$, and hence, a hyperbolic quadric $Q^+(3,q)$.    As $\mu(\mathcal{U})$ is a set of projective lines, each one contains $q+1=s+1$ points.
    
    Moreover, by Property $2.$, we know that every affine point is contained in precisely $2=t+1$ lines. 
    Hence let $P\in\mathcal{U}_\infty$ be a point at infinity incident with a line $\ell_P\in\mu(\mathcal{U})$. From ($\ast$), we have that there is precisely one line in $\mu(\mathcal{U})$, different from $\ell_P$ whose extension is $P$. % If there would be three lines $m_1,m_2,m_3$ contained in $\mathcal{U}$ whose extensions contain the point $P$, then there are two different planes through $m_1$ containing a line parallel to $m_1$, a contradiction. 
    Since there are $q^2+q$ points in $\mathcal{U}$, each on exactly $2$ lines, we have that there are $2(q+1)$ lines contained in $\mathcal{U}$, giving rise to $q+1$ points in $\pi_\infty$. Furthermore, it follows from the fact that there are no planes with more than $2$ lines that there are no triangles in $\Tilde{\mathcal{U}}$. Hence, $\Tilde{\mathcal{U}}$ is indeed a generalised quadrangle of order $(q,1)$ embedded in $\pg{3}{q}$. Since it has $q^2+q$ affine points by Proposition 3, it meets $\pi_\infty$ in $q+1$ points forming a non-degenerate conic.
    \end{proof}

\begin{lm}\label{oneway}
    Suppose that $q\geq 5$.
    If $\mS\ni P_\infty$ is a scattered linear set of rank $3$ of $\pg{1}{q^3}$, then the ABB-representation of $\mS\setminus\set{P_\infty}$ is the affine part of a hyperbolic quadric $\mathcal{Q}$ intersecting the plane $\pi_\infty$ in a non-degenerate conic. Furthermore, the extension of this conic contains the $3$ conjugate points defining the spread element $\pi_\infty$.
\end{lm}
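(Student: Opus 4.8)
The plan is to recognise the point set $\mathcal{U}:=\phi(\mS\setminus\{P_\infty\})$ as an instance of Proposition \ref{combprop} and then to pin down the conic at infinity by means of a single external subline. First I would record the ambient dictionary. Since $\phi$ is injective on affine points and $\phi(\ell\setminus\{P_\infty\})$ already fills all $q^3$ affine points of the $3$-space $\Pi$ (which has $\pi_\infty=\mathcal{F}(P_\infty)$ as its plane at infinity), the affine part of $\Pi$ is \emph{exactly} $\phi(\ell\setminus\{P_\infty\})$. Consequently, by the ``vice versa'' direction of Result \ref{Res_SublinesTangent}(a), every affine line of $\Pi$ is the ABB-image of an $\FF_q$-subline of $\pg{2}{q^t}$ tangent to $\ell_\infty$; since such a line meets $\pi_\infty=\mathcal{F}(P_\infty)$, its preimage is tangent to $\ell_\infty$ precisely at $P_\infty$, hence lies in $\ell$ and passes through $P_\infty$. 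Thus affine lines of $\Pi$ correspond bijectively to $\FF_q$-sublines of $\ell$ through $P_\infty$.

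With this dictionary in hand I would verify the three hypotheses of Proposition \ref{combprop} for $\mathcal{U}\subseteq\ag{3}{q}$. Property $3$ is immediate: a scattered linear set of rank $3$ has $\frac{q^3-1}{q-1}=q^2+q+1$ points, and removing $P_\infty$ leaves $q^2+q$ affine points, all distinct under $\phi$. For Property $1$, an affine line $\ell'$ of $\Pi$ comes from a subline $m'\subseteq\ell$ through $P_\infty$, and $\ell'\cap\mathcal{U}=\phi\bigl((m'\cap\mS)\setminus\{P_\infty\}\bigr)$; as $P_\infty\in m'\cap\mS$, Result \ref{Res_LinearSetIntersectionSubline} gives $|m'\cap\mS|\in\{1,2,3,q+1\}$, whence $|\ell'\cap\mathcal{U}|\in\{0,1,2,q\}$. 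For Property $2$, the lines of $\mathcal{U}$ through a point $\phi(Q)$ correspond to the $\FF_q$-sublines of $\mS$ through $Q$ and $P_\infty$ (each being tangent at $P_\infty$), of which there are exactly two by Result \ref{Res_LinearSetClub}(b). Applying Proposition \ref{combprop} then yields that $\mathcal{U}$ is the affine part of a hyperbolic quadric $\mathcal{Q}$ of $\Pi$ meeting $\pi_\infty$ in a non-degenerate conic $\mathcal{C}_\infty$.

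For the final statement I would exploit an \emph{external} subline. Choosing two affine points $A,B\in\mathcal{U}$ whose join is a secant (not a line) of $\mathcal{Q}$, the two sublines of $\mS$ through $\phi^{-1}(A),\phi^{-1}(B)$ must avoid $P_\infty$, hence are external to $\ell_\infty$; such points exist since the two generators through $A$ cover only $2q-1<q^2+q$ points. Fix one such subline $m$. As $t=3$ has only the divisors $1$ and $3$ and $m$ is not itself tangent, the smallest tangent subline containing $m$ is the $\FF_{q^3}$-subline $\ell$, so Result \ref{Res_SublinesTangent}(b) applies with $k=3$: the ABB-image $\mathcal{C}=\phi(m)$ is a normal rational curve of degree $3$ in $\Pi$ whose $\FF_{q^t}$-extension $\mathcal{C}^*$ meets the indicator set of $\spread_3$ in three conjugate points. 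Since $\mathcal{C}^*\subseteq\overline{\Pi}$ and $\overline{\Pi}\cap\overline{H_\infty}=\overline{\pi_\infty}$, these three points lie in $\overline{\pi_\infty}$, and are therefore exactly the three conjugate points cut out on $\overline{\pi_\infty}$ by the director spaces, i.e.\ the three conjugate points defining the spread element $\pi_\infty$. Finally, because the $q+1$ affine points of $\mathcal{C}$ lie on $\mathcal{Q}$, I would argue that the whole extended curve does, that is $\mathcal{C}^*\subseteq\mathcal{Q}^*$; the three conjugate points then lie in $\mathcal{Q}^*\cap\overline{\pi_\infty}=\mathcal{C}_\infty^*$, as required.

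The main obstacle is precisely this last containment $\mathcal{C}^*\subseteq\mathcal{Q}^*$. Restricting the quadratic form of $\mathcal{Q}$ to the parametrised degree-$3$ curve produces a binary form of degree $6$ that vanishes at the $q+1$ points of $\mathcal{C}$; for $q\geq 7$ this exceeds the degree and forces the form to vanish identically, giving the containment at once. The borderline case $q=5$ (where $q+1=6$ equals the degree, so the form could in principle be a nonzero multiple of $X^{q}Y-XY^{q}$) needs a short separate argument — for example combining the \emph{two} external sublines through a fixed pair of points, or a direct coordinate computation in a canonical model of a scattered rank-$3$ linear set — to exclude the possibility that $\mathcal{C}^*$ meets $\mathcal{Q}^*$ in exactly its $q+1$ affine points. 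By contrast, the line-intersection dictionary and the verification of the three combinatorial properties are routine given the quoted results, so the weight of the argument rests on setting up Proposition \ref{combprop} correctly and on this containment step.
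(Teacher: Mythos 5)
Your proposal follows essentially the same route as the paper's own proof: verify the three hypotheses of Proposition \ref{combprop} (via Result \ref{Res_LinearSetIntersectionSubline} for the line intersections, Result \ref{Res_LinearSetClub}(b) applied to $P_\infty$ and an affine point for the two-generators property, and the cardinality count), and then feed an external subline of $\mS$ into Result \ref{Res_SublinesTangent}(b) with $k=3$ to place the three conjugate points on the extended quadric. Your more explicit dictionary between affine lines of $\Pi$ and tangent sublines of $\ell$, and your choice of a pair $A,B$ whose join is a secant (so that \emph{both} sublines through them are external) instead of the paper's remark that at least one of the two sublines through any two affine points misses $P_\infty$, are only cosmetic differences.

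The one point of substance is the containment $\mathcal{C}^*\subseteq\mathcal{Q}^*$, and here you have put your finger on a step the paper does not justify: the paper simply writes ``since $m\subseteq\mS$, the extension of $\mathcal{C}$ is contained in $\mathcal{Q}$''. Your B\'ezout argument (the quadratic form restricted to the parametrised cubic is a binary sextic over $\FF_q$ vanishing at all $q+1$ rational parameters) is exactly the implicit justification, and as you note it is conclusive only for $q\geq 7$. At $q=5$ the issue is genuine and not merely an artefact of the method: six points impose at most six linear conditions on the ten-dimensional space of quadratic forms of $\pg{3}{q}$, while the full twisted cubic imposes seven, so there really do exist quadrics of $\pg{3}{5}$ containing all six rational points of a twisted cubic without containing the curve; ruling this out requires using that $\mathcal{Q}$ is the specific quadric arising from the scattered linear set (or a computation in the canonical model $\left\{\left(x,x^q\right):x\in\FF_{q^3}^*\right\}$, as you suggest), not degree considerations alone. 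So your proposal, as written, proves the lemma for $q\geq 7$ and leaves the stated case $q=5$ open --- but this is an incompleteness it shares with, and makes visible in, the paper's own proof, rather than a defect of your approach; to fully match the statement you would need to carry out one of the repairs you sketch.
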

\begin{proof}
Let  $\mS\ni P_\infty$ be a point set of $\pg{1}{q^3}$, which is a scattered linear set of rank $3$ and let $T$ be the ABB-representation of $S\setminus\{P_\infty \}$.

We see that the three conditions of Proposition \ref{combprop} hold for $\mathcal{U}=T$:

\begin{enumerate}
    \item An affine line $\ell\in \Pi$ corresponds to a tangent subline of $\pg{1}{q^3}$. Condition $1$ follows from Result \ref{Res_LinearSetIntersectionSubline}.
    \item By Result \ref{Res_SublinesTangent} we know that through every two distinct points $P_1, P_2$ of $S$ there are precisely two $\mathbb{F}_q$-sublines contained in $S$. Let $P_1$ be the point at infinity $P_\infty$ and let $P_2$ be a random affine point in $S$. Then we know that $P_2$ is contained in precisely two tangent $\mathbb{F}_q$-sublines.  Hence, we know by Result $\ref{Res_SublinesTangent}$ that $\varphi(P_2)$ is contained in precisely two lines fully contained in $T$.
    \item The scattered linear set contains $q^2+q+1$ points, of which $q^2+q$ affine ones. 
\end{enumerate}
This implies that $T$ is the affine point set of a hyperbolic quadric. Now consider $\mathcal{Q}$, the extension to $\FF_{q^t}$ of the projective completion of $T$.

By Proposition \ref{Res_LinearSetClub}, through two points of $\mS\setminus\{P_\infty\}$, there are two sublines contained in $\mS$, at least one of which, say $m$, does not contain $P_\infty$. 
By Result \ref{Res_SublinesTangent}, we know that the $\FF_q$-subline $m$, corresponds to a normal rational curve $\mathcal{C}$ whose extension to $\FF_{q^t}$ contains the $3$ conjugate points defining the spread element $\pi_\infty$. Since $m\subseteq\mS$, the extension of $\mathcal{C}$ is contained in $\mathcal{Q}$, and hence, $\mathcal{Q}$ contains the $3$ conjugate points defining $\pi_\infty$.
\end{proof}
\begin{rmk} The first part of Lemma 4.3 can also be proven using the coordinate description of $\mathcal{B}(\pi)$, where $\pi$ is a scattered plane in $\pg{5}{q}$ with respect to the Desarguesian plane spread $\spread$, derived in \cite{LSZ}. If we intersect the hypersurface, whose coordinates are explicitly described there, with a $3$-dimensional subspace containing a spread element $S$ of $\spread$, we find the union of a hyperbolic quadric with the points of $S$. To show that the extension of this hyperbolic quadric contains the $3$ conjugate points, one could then use the coordinates for the indicator sets derived in \cite{BarwickJackson}.
\end{rmk}

\begin{prop}\label{Prop_Numberhyperbol}
    There exists $\frac{1}{2} q^3(q^3-1)$ hyperbolic quadrics $\mathcal{Q}$ in $\Pi$, intersecting the plane $\pi_\infty$ in a non-degenerate conic $\mathcal{C}$ such that its $\mathbb{F}_{q^t}-$extension contains the $3$ conjugate points generated by the spreadelement $\pi_\infty$.
\end{prop}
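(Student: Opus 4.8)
The plan is to count the required hyperbolic quadrics in two independent stages and multiply: first determine how many admissible conics $\mathcal{C}$ occur as $\mathcal{Q}\cap\pi_\infty$, and then, for a fixed such $\mathcal{C}$, count the hyperbolic quadrics $\mathcal{Q}$ of $\Pi\cong\pg{3}{q}$ meeting $\pi_\infty$ exactly in $\mathcal{C}$. Since two quadrics meeting $\pi_\infty$ in different conics are distinct, no overcounting occurs and the answer is the product of the two stage-counts.

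For the first stage, an admissible conic is a non-degenerate conic of $\pi_\infty$ whose $\FF_{q^3}$-extension contains the three conjugate points $R,R^\sigma,R^{\sigma^2}$ generated by $\pi_\infty$; equivalently (for $t=3$) it is a block of the design $\mathcal{H}$, that is, a conic of the circumscribed bundle. Passing through the $\sigma$-orbit $\{R,R^\sigma,R^{\sigma^2}\}$ of three points spanning the extension of $\pi_\infty$ imposes three independent linear conditions on the $\pg{5}{q}$ of ternary quadratic forms over $\FF_q$, so these conics form a $\pg{2}{q}$ and number $\theta_3=q^2+q+1$. I would then verify that each is non-degenerate: if the extension of such a conic were a (possibly repeated) line pair, then $\sigma$-invariance together with $\mathrm{ord}(\sigma)=3$ would force each component line to be fixed by $\sigma$, hence defined over $\FF_q$; but one such line would then contain the whole orbit $R,R^\sigma,R^{\sigma^2}$, contradicting that these span the extension of $\pi_\infty$. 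Thus all $q^2+q+1$ admissible conics are non-degenerate.

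For the second stage, fix such a conic $\mathcal{C}$, defined by a rank-$3$ form $g$ on $\pi_\infty$ (with Gram matrix $G$), and choose coordinates with $\pi_\infty:x_3=0$. Every quadric $\mathcal{Q}$ with $\mathcal{Q}\cap\pi_\infty=\mathcal{C}$ has a defining form $F=g+x_3\ell+c\,x_3^2$ with $\ell$ a linear form in $x_0,x_1,x_2$ and $c\in\FF_q$; since $\mathcal{C}$ is non-degenerate its form is unique up to scalar, so distinct pairs $(\ell,c)$ give distinct quadrics. In odd characteristic a Schur-complement computation gives the discriminant of $F$ as $\det(G)\bigl(c-c_0(\ell)\bigr)$ for an explicit $c_0(\ell)$, so for each of the $q^3$ forms $\ell$ exactly one $c$ is singular, while the remaining $q-1$ values make the discriminant range over all of $\FF_q^\ast$; hence precisely half of them, namely $\tfrac{q-1}{2}$, are hyperbolic, according to the square/non-square dichotomy of the discriminant. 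This gives $q^3\cdot\tfrac{q-1}{2}=\tfrac{q^3(q-1)}{2}$ hyperbolic quadrics meeting $\pi_\infty$ in $\mathcal{C}$. The same total $q^3(q-1)$ of non-degenerate quadrics, split evenly, also arises in even characteristic, where non-singularity amounts to $\ell$ not vanishing at the nucleus of $\mathcal{C}$ and the type is read off from the Arf invariant.

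Multiplying the two stage-counts yields
\[
    (q^2+q+1)\cdot\frac{q^3(q-1)}{2}=\frac{q^3(q^3-1)}{2}\textnormal{,}
\]
as claimed. I expect the main obstacle to lie in the second stage, specifically in establishing that exactly half of the non-degenerate quadrics through a fixed conic are hyperbolic: in odd characteristic this is the clean square/non-square dichotomy of the discriminant, but in even characteristic it requires the corresponding Arf-invariant analysis, which is the delicate point. The non-degeneracy check of the first stage, while short, is also essential, since the statement restricts to genuine conics—not line pairs—through the three conjugate points.
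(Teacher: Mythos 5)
Your proof is correct, and its skeleton is the same as the paper's: count the admissible conics in $\pi_\infty$, count the hyperbolic quadrics of $\Pi$ meeting $\pi_\infty$ in a fixed such conic, and multiply. What differs is how each factor is computed. For the first factor the paper simply invokes the circumscribed-bundle design of \cite{BakerBrownEbertFisher} to get $q^2+q+1$ conics, while you re-derive the count directly (three independent linear conditions on the space of ternary forms -- essentially what Lemma \ref{juistevorm} later does in explicit coordinates) and add a clean Galois argument for non-degeneracy: since $\sigma$ has order $3$ it cannot transpose the two components of a line pair, so each component would be $\sigma$-fixed and would then contain all of $R,R^\sigma,R^{\sigma^2}$, which span the plane -- a contradiction. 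For the second factor the paper double counts pairs (conic, hyperbolic quadric) using three classical numbers from \cite{Hirschfeldgalois} (hyperbolic quadrics in a solid, conic sections of a fixed hyperbolic quadric, conics in a solid), obtaining $\tfrac{1}{2}q^3(q-1)$ in a characteristic-free way; you instead parametrise the quadrics through $\mathcal{C}$ as $g+x_3\ell+cx_3^2$ and read the type off the discriminant. Your route is more self-contained (no external counting formulas) and gives the finer local information that each admissible $\ell$ contributes exactly $\tfrac{q-1}{2}$ (odd $q$), respectively $q/2$ (even $q$), hyperbolic quadrics; the paper's route buys brevity and complete immunity to characteristic issues.

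The one genuine gap is the even-characteristic half of your second stage: ``split evenly \dots\ the type is read off from the Arf invariant'' is an assertion, not an argument, and it is precisely the point you flag as delicate. The claim is true, and it can be closed without any Arf computation. For fixed $\ell$ with $\ell(N)\neq 0$, where $N$ is the nucleus of $\mathcal{C}$, your own criterion shows that all $q$ quadrics $F_c=g+x_3\ell+cx_3^2$, $c\in\FF_q$, are non-singular (the polar form of $F_c$ does not involve $c$ in characteristic $2$); each meets $\pi_\infty$ exactly in the $q+1$ points of $\mathcal{C}$, and their affine point sets partition the $q^3$ affine points of $\Pi$, since an affine point $(\bar{x},1)$ lies on $F_c$ for the unique value $c=g(\bar{x})+\ell(\bar{x})$. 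Hence, if $h$ of these $q$ quadrics are hyperbolic, then $h(q^2+q)+(q-h)(q^2-q)=q^3$, forcing $h=q/2$. (The same partition argument re-proves the odd-characteristic case as well, once one checks that the unique singular member of the pencil is a cone with affine vertex and $q^2$ affine points.) With that step filled in, your proof is complete and agrees with the paper's count.
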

\begin{proof}
    We again use the fact that all non-degenerate conics in $\pi_\infty$, such that its extension contains three fixed conjugated points, together with all points in $\pi_\infty$ form a $2-(\theta_2, q+1,1)$-design as shown in \cite{BakerBrownEbertFisher}. Hence, there are $\theta_2$ possibilities for choosing an appropriate conic in $\pi_\infty$. It is known that the total number of hyperbolic quadrics in $\Pi$ is $\frac{1}{2}q^4(q^2+1)(q^3-1)$, the number of non-degenerate conics contained in a fixed hyperbolic quadric is $\theta_3-(q+1)^2=q(q^2-1)$ and the number of non-degenerate conics in a solid is $\theta_3 q^2(q^3-1)$ \cite{Hirschfeldgalois}. We can now perform a double counting to obtain that there exist 
    \begin{align*}
        \frac{\frac{1}{2}q^4(q^2+1)(q^3-1)q(q^2-1)}{\theta_3 q^2(q^3-1)}=\frac{1}{2}q^3(q-1)
    \end{align*}
    hyperbolic quadrics containing a fixed non-degenerate conic. Hence, in total, there are $\frac{1}{2} q^3(q^3-1)$ hyperbolic quadrics $\mathcal{Q}$ in $\Pi$, intersecting the plane $\pi_\infty$ in a non-degenerate conic $\mathcal{C}$ such that its $\mathbb{F}_{q^t}-$extension contains the $3$ conjugate points generated by the spreadelement $\pi_\infty$. 
\end{proof}

\begin{prop}\label{Prop_NumberScattered}
    Let $q\geq 5$. There exists $\frac{1}{2}q^3(q^3-1)$ scattered linear sets of rank $3$  in $\pg{1}{q^3}$ which contain $P_\infty$.
\end{prop}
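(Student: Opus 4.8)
The plan is to count the scattered linear sets directly, in the field-reduction model of $\pg{1}{q^3}$, and then match the outcome with Proposition \ref{Prop_Numberhyperbol}. Writing $\spread$ for the Desarguesian $2$-spread of $\pg{5}{q}$ obtained from $\pg{1}{q^3}$ and $\pi_\infty=\mathcal{F}(P_\infty)$ for the spread element of $P_\infty$, a scattered $\FF_q$-linear set of rank $3$ containing $P_\infty$ is precisely a set $\mathcal{B}(\pi)$ where $\pi$ is a scattered plane of $\pg{5}{q}$ with $|\pi\cap\pi_\infty|=1$. So I would first enumerate these scattered planes and then divide by the number of planes inducing one and the same linear set.

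For the multiplicity, I would invoke Result \ref{Res_LinearSetClub}(c): through every point of a scattered plane there is exactly one further plane inducing the same set $\mathcal{B}$. I would upgrade this local statement to the global claim that each scattered linear set of rank $3$ is induced by exactly two planes. Since any plane inducing $\mathcal{B}(\pi)$ is a transversal plane to the partial $2$-spread formed by the $\theta_3=q^2+q+1$ spread elements of the set (meeting each in a single point), the task is to show that this partial spread admits exactly two transversal planes. Result \ref{Res_LinearSetClub}(c) already gives that at most two pass through any fixed point; one then rules out a further transversal meeting $\pi_\infty$ in another point by analysing the regulus determined by a transversal together with a few of the spread elements and checking that being transversal to all $\theta_3$ of them leaves no room for a third plane. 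The two transversals then necessarily share the point $r=\pi\cap\pi_\infty$, so no linear set is recorded under two distinct points of $\pi_\infty$.

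It then remains to count the scattered planes meeting $\pi_\infty$ in exactly one point. Fixing such a point $r$ and passing to the quotient $\pg{5}{q}/r\cong\pg{4}{q}$, a plane through $r$ meeting $\pi_\infty$ only in $r$ becomes a line skew to the line $\pi_\infty/r$, and there are $q^4(q^2+q+1)$ of these; the task is to isolate the scattered ones among them, for which the target number is $q^3(q-1)$. Summing over the $\theta_3$ points $r$ and dividing by the multiplicity $2$ would then yield
\[
    \frac{\theta_3\cdot q^3(q-1)}{2}=\frac{q^3(q^3-1)}{2}\textnormal{,}
\]
in agreement with Proposition \ref{Prop_Numberhyperbol}.

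The main obstacle is the exact enumeration of the scattered planes through a fixed point of $\pi_\infty$: among the planes skew to $\pi_\infty/r$ in the quotient one must control precisely which avoid meeting some other spread element in a full line, and this requires a genuine understanding of how planes sit with respect to the Desarguesian spread rather than a routine subspace count. A viable alternative that bypasses this enumeration is an orbit--stabiliser argument: granting that the scattered linear sets of rank $3$ of $\pg{1}{q^3}$ form a single $\mathrm{PGL}(2,q^3)$-orbit with stabiliser of order $2(q^2+q+1)$, the number through a fixed point is immediately $\tfrac{1}{2}q^3(q^3-1)$; in that route, establishing the transitivity and the stabiliser order would become the crux instead.
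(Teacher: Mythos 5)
Your proposal breaks down at its central structural claim, namely that each scattered linear set of rank $3$ is induced by exactly two planes, both through the same point of $\pi_\infty$. This is false, and Result \ref{Res_LinearSetClub}(c) itself rules it out: if $\pi_1$ and $\pi_2$ were the \emph{only} planes inducing $\mS$, then every point $p$ of $\pi_1$ would, by Result \ref{Res_LinearSetClub}(c), lie on a second inducing plane through $p$, which would have to be $\pi_2$; hence $\pi_1\subseteq\pi_2$, a contradiction. In fact, combining Result \ref{Res_LinearSetClub}(c) with the transitivity of the elementwise stabiliser of $\spread$ on the points of a spread element (exactly the argument of Lemma \ref{heads}), every one of the $\theta_3^2$ points of $\mathcal{F}(\mS)$ lies on precisely two inducing planes, so counting incidences shows that $\mS$ is induced by exactly $2\theta_3=2(q^2+q+1)$ planes; when $P_\infty\in\mS$, these planes meet $\pi_\infty$ in all $\theta_3$ of its points, each point lying on two of them. (Concretely, for $U=\sett{(x,x^q)}{x\in\FF_{q^3}}$ and $U'=\sett{(x,x^{q^2})}{x\in\FF_{q^3}}$, all the planes $U\lambda$ and $U'\lambda$ with $\lambda\in\FF_{q^3}^*$ induce the same scattered linear set.) So the regulus analysis you hope will exclude a third transversal cannot succeed, and your other unproven step inherits the same error: the number of scattered planes through a fixed $r\in\pi_\infty$ is $q^3(q^3-1)$, not $q^3(q-1)$ --- indeed, among the $q^4\theta_3$ planes through $r$ meeting $\pi_\infty$ only in $r$, the non-scattered ones are exactly those of the form $\langle r,L\rangle$ with $L$ a line of one of the other $q^3$ spread elements, leaving $q^4\theta_3-q^3\theta_3=q^3(q^3-1)$. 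Your displayed formula returns the correct total only because your two false values differ from the true ones by the same factor $\theta_3$, once in the numerator and once in the denominator; an honest execution of your plan would expose the inconsistency rather than the theorem.

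This is also precisely where your route diverges from the paper's. The paper never needs any global multiplicity statement: it counts \emph{all} scattered planes of $\pg{5}{q}$ by a complementary count, $\qbin{6}{3}-(q^3+1)-(q^3+1)\theta_3(q^3+q^2+q)=(q^3+1)q^3(q^3-1)$, then double counts triples $(\pi,r,L_\pi)$ using only the local fact from Result \ref{Res_LinearSetClub}(c) that exactly two inducing planes pass through any given point, and finally distributes the resulting number of linear sets evenly over the $q^3+1$ points of $\pg{1}{q^3}$. Your fallback orbit--stabiliser argument is sound in substance --- the scattered linear sets of rank $3$ do form a single $\mathrm{PGL}(2,q^3)$-orbit with stabiliser of order $2(q^2+q+1)$, which is consistent with the multiplicity $2\theta_3$ above --- but you prove neither the transitivity nor the stabiliser order, and you yourself flag the enumeration in your main route as unresolved. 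As written, neither of your routes is a proof, and the first rests on intermediate claims that are not merely unproven but false.
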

\begin{proof} We will first count the number of scattered planes in $\pg{5}{q}$ with respect to the Desarguesian plane spread $\spread$. There are $\qbin{6}{3}$ planes in $\pg{5}{q}$, of which $q^3+1$ are elements of $\spread$. Now consider  triples $(S,L,\pi)$, where $S$ is an element of $\spread$, $L$ is a line in $S$, and $\pi$ is a plane containing $L$, different from $S$. It easily follows that there are $(q^3+1)(q^2+q+1)(q^3+q^2+q)$ such triples, and since the choice of the plane $\pi$ defines $S$ and $L$ in a unique way, we find $(q^3+1)(q^2+q+1)(q^3+q^2+q)$ planes meeting some spread element in exactly a line. We conclude that there are $\qbin{6}{3}-(q^3+1)-(q^3+1)(q^2+q+1)(q^3+q^2+q)=(q^3+1)q^3(q^3-1)$ scattered planes.
%Each of those scattered planes determines $(q^2+q+1)$ elements of $\spread$, and there $q^3+1$ points in $\spread$, so there are $(q^3+1)q^3(q^3-1)(q^2+q+1)/(q^3+1)=q^3(q^3-1)(q^2+q+1)$ planes meeting a fixed element $D$ of $\spread$. 
Now count $(\pi,r,S)$ where $r$ is a point of the scattered plane $\pi$ such that $L_\pi$ is the scattered linear set $S$.
On one hand, we have $(q^3+1)q^3(q^3-1)$ scattered planes $\pi$ determining a unique linear set $S$, and $q^2+q+1$ points $r$. On the other hand, by Result \ref{Res_LinearSetClub}(c), we have that given $S$ and $r$, there are exactly $2$ planes $\pi$ through $r$ with $L_\pi=S$. It follows that $|S|(q^2+q+1)2=(q^3+1)q^3(q^3-1)(q^2+q+1)$, and hence, $|S|=\frac{(q^3+1)q^3(q^3-1)}{2}$. The number of scattered linear sets through each of the $q^3+1$ points of $\pg{1}{q^3}$ is a constant, so there are $\frac{q^3(q^3-1)}{2}$ scattered linear sets through $P_\infty$. \end{proof}
\begin{thm} \label{thmscattered} A set $\mathcal{S}$ is the ABB-representation of the affine point set of a scattered linear set of rank $3$ in $\pg{1}{q^3}$, containing $P_\infty$ if and only if it is the affine point set of a hyperbolic quadric intersecting the plane $\pi_\infty$ in a non-degenerate conic $\mathcal{C}$ such that its $\mathbb{F}_{q^t}-$extension contains the $3$ conjugate points generated by the spreadelement $\pi_\infty$.
\end{thm}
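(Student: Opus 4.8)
The plan is to prove the equivalence by combining the geometric translation already established in Lemma \ref{oneway} with a cardinality argument, exactly in the style of the concluding step of Theorem \ref{thmclub}. The forward (``only if'') implication requires essentially no new work: Lemma \ref{oneway} already shows that whenever $\mS\ni P_\infty$ is a scattered linear set of rank $3$, the image $\phi(\mS\setminus\{P_\infty\})$ is the affine part of a hyperbolic quadric meeting $\pi_\infty$ in a non-degenerate conic whose extension passes through the three conjugate points defining $\pi_\infty$. Thus the only substantive task is to upgrade this one-directional statement to a genuine equivalence.

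For the reverse (``if'') implication I would run a counting comparison. Let $\mathcal{A}$ denote the collection of scattered linear sets of rank $3$ in $\pg{1}{q^3}$ containing $P_\infty$, and let $\mathcal{B}$ denote the collection of hyperbolic quadrics of $\Pi$ meeting $\pi_\infty$ in a non-degenerate conic whose $\FF_{q^3}$-extension contains the three conjugate points generated by $\pi_\infty$. Lemma \ref{oneway} provides a well-defined map $\mathcal{A}\to\mathcal{B}$ sending $\mS$ to the unique hyperbolic quadric whose affine part is $\phi(\mS\setminus\{P_\infty\})$; this quadric is determined by its affine part, since (as in Proposition \ref{combprop}) its points at infinity are exactly the points on the lines contained in that affine set. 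This map is injective: because $\phi$ is a bijection on affine points, two scattered linear sets with the same image would have the same affine point set, and as both contain $P_\infty$ they would coincide. By Proposition \ref{Prop_NumberScattered} we have $|\mathcal{A}|=\frac{1}{2}q^3(q^3-1)$, and by Proposition \ref{Prop_Numberhyperbol} we have $|\mathcal{B}|=\frac{1}{2}q^3(q^3-1)$. An injection between finite sets of equal cardinality is a bijection, so the map is surjective; hence every quadric in $\mathcal{B}$ arises as the ABB-representation of the affine part of some member of $\mathcal{A}$, which is precisely the reverse direction.

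The conceptual difficulty has been absorbed entirely into the preceding results, so at this stage the theorem is close to a formal consequence: Lemma \ref{oneway} supplies the geometry and Propositions \ref{Prop_Numberhyperbol} and \ref{Prop_NumberScattered} supply the matching enumerations. The only point requiring a little care is the injectivity (equivalently, well-definedness) of the assignment $\mS\mapsto\mathcal{Q}$, and here the subtlety is merely to record that passing to $\mS\setminus\{P_\infty\}$ loses no information because every set under consideration contains $P_\infty$, and that the quadric is recovered from its affine part. Once these observations are in place, the equality of the two counts forces the bijection and the equivalence of Theorem \ref{thmscattered} follows at once.
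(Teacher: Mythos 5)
Your proposal is correct and follows essentially the same route as the paper: the forward direction is exactly Lemma \ref{oneway}, and the reverse direction is the same counting comparison between Propositions \ref{Prop_Numberhyperbol} and \ref{Prop_NumberScattered}. Your explicit verification of injectivity and well-definedness of the map $\mS\mapsto\mathcal{Q}$ is a welcome (and harmless) elaboration of a step the paper leaves implicit.
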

\begin{proof} Lemma \ref{oneway} proves that the ABB-representation of the affine point set of a scattered linear set of rank $3$ in $\pg{1}{q^3}$, containing $P_\infty$ is a hyperbolic quadric intersecting the plane $\pi_\infty$ in a non-degenerate conic $\mathcal{C}$ whose extension contains the $3$ conjugate points generating the spreadelement $\pi_\infty$. For the other direction, it suffices to note that the number of such hyperbolic quadrics found in Proposition  \ref{Prop_Numberhyperbol} is precisely the number of scattered linear sets containing $P_\infty$ counted in Proposition \ref{Prop_NumberScattered}.
\end{proof}

%\begin{crl} The representation is...
%
%\end{crl}

\section{The optimal case of seven planes of \texorpdfstring{$\pgTitle{5}{q}$}{PG(5,q)} in higgledy-piggledy arrangement}\label{sec5}

%{\em Blocking sets} are one of the central topics in finite geometry.
%%Within the research domain of finite geometry, the notion of \emph{blocking sets} is one of great interest.
%%We adopt the definition used in \cite{DeBeuleStorme}.
%
%\begin{df}\label{Def_BlockingSet}
%    Let $k\in\set{0,1,\dots,n-1}$.
%    A $k$\emph{-blocking set} of $\pg{n}{q}$ is a point set that meets every $(n-k)$-dimensional subspace.
%\end{df}
%
%A $k$-subspace is the easiest (and smallest) example of a $k$-blocking set of $\pg{n}{q}$ \cite{BoseBurton}.
%A natural generalisation of a $k$-blocking set of $\pg{n}{q}$ is the concept of a $t$-fold $k$-blocking set, $t\in\NN$, which is a point set of $\pg{n}{q}$ that meets every $(n-k)$-dimensional subspace in at least $t$ points.
%Obviously, any set of $t$ pairwise disjoint $k$-subspaces is a $t$-fold $k$-blocking set.

In order to define higgledy-piggledy sets, we need the concept of a {\em strong $k$-blocking set}, which was introduced in \cite[Definition $3.1$]{DavydovEtAl}. They have also appeared in the literature under the terminology \emph{generator sets} and \emph{cutting blocking sets}.

\begin{df}
    Let $k\in\set{0,1,\dots,n-1}$.
    A \emph{strong $k$-blocking set} in $\pg{n}{q}$ is a point set that meets every $(n-k)$-dimensional subspace $\kappa$ in a set of points spanning $\kappa$.
\end{df}

%Note that any strong $k$-blocking set is necessarily an $(n-k+1)$-fold $k$-blocking set, although the converse is generally false.
%Following this line of thought, one could try to construct a strong $k$-blocking set by considering all points lying in the union of a certain number of well-chosen $k$-subspaces. Although sporadic examples of such point sets were already presented in \cite{DavydovEtAl}, this idea was thoroughly investigated in \cite{FancsaliSziklai2,HegerPatkosTakats} for $k=1$ and later generalised in \cite{FancsaliSziklai3} to arbitrary $k$.

\begin{df}
    Let $k\in\set{0,1,\dots,n-1}$ and suppose that $\mathcal{K}$ is a set of $k$-subspaces in $\pg{n}{q}$.
    If the union of points contained in at least one subspace of $\mathcal{K}$ is a strong $k$-blocking set, then the elements of $\mathcal{K}$ are said to be in \emph{higgledy-piggledy arrangement} and the set $\mathcal{K}$ itself is said to be a \emph{higgledy-piggledy set of $k$-subspaces}.
\end{df}
%
%As one generally wishes to construct higgledy-piggledy sets of small size, \textbf{lower bounds} on the size of such sets were determined to reveal which sizes would (theoretically) be optimal.
%Based on the reasoning behind \cite[Theorem $14$]{FancsaliSziklai2}, the authors of \cite[Theorem $20$]{FancsaliSziklai3} inductively determined a lower bound on the size of general higgledy-piggledy sets of $k$-subspaces.
%Consequently, by taking the duality of the projective space into account, the author of \cite{Denaux} slightly refined this bound.
%

The goal is to construct higgledy-piggledy sets of small size. The following particular cases follow from the known lower bounds (see \cite{FancsaliSziklai3}, and \cite{Denaux} for a slight improvement):
%\begin{thm}[{\cite{Denaux,FancsaliSziklai3}}]\label{Res_LowerBound}
%    Let $k\in\set{0,1,\dots,n-1}$.
%    A higgledy-piggledy set of $k$-subspaces in $\pg{n}{q}$ contains at least
%    \[
%        \min\set{q,\max\set{(k+1)+\sum_{i=1}^{k+1}\left\lfloor\frac{n-k-1}{i}\right\rfloor,(n-k)+\sum_{i=1}^{n-k}\left\lfloor\frac{k}{i}\right\rfloor}}+1
%    \]
%    elements.
%\end{thm}
%
%Focusing on ambient projective geometries of small dimension, the above theorem implies the following (excluding the trivial cases $k\in\set{0,n-1}$).

\begin{crl}\label{Crl_LowerBound}
    If $0<k<n-1$ and $q\geq 7$, then a higgledy-piggledy set of $k$-subspaces
    \begin{enumerate}
        \item contains at least $4$ elements if $n=3$,
        \item contains at least $6$ elements if $n=4$, and
        \item contains at least $7$ elements if $n=5$.
    \end{enumerate}
\end{crl}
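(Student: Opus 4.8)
The plan is to read off all three inequalities from the general lower bound on the number of elements of a higgledy-piggledy set of $k$-subspaces of $\pg{n}{q}$, as established in \cite{FancsaliSziklai3} and subsequently sharpened in \cite{Denaux}. The first step is to record that bound in the form of an explicit function $f(n,k)$ --- valid under the standing hypothesis $q\geq 7$ --- with the property that any higgledy-piggledy set of $k$-subspaces of $\pg{n}{q}$ has at least $f(n,k)$ elements. The corollary is then nothing more than the evaluation of $f(n,k)$ at $n\in\{3,4,5\}$ for every admissible $k$ (that is, $0<k<n-1$), together with the observation that these evaluations are at least $4$, $6$ and $7$ respectively.

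Carrying this out, I would treat the base case $k=1$ (lines) first, since it is the value for which the sharpened bound of \cite{Denaux} gives the cleanest expression; here it returns $4$, $6$ and $7$ for $n=3,4,5$, and it is precisely this improvement over \cite{FancsaliSziklai3} that is needed to reach the stated constants rather than weaker ones. For $n=3$ this already proves item~(1), as $k=1$ is the only admissible subspace dimension.

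For $n=4$ and $n=5$ I then have to confirm that the larger admissible values of $k$ do not yield a smaller bound. The efficient way to see this is the self-duality of the problem under $k\mapsto n-1-k$ (dualising $\pg{n}{q}$ sends a higgledy-piggledy set of $k$-subspaces to one of $(n-1-k)$-subspaces), which gives $f(n,k)=f(n,n-1-k)$ and leaves the admissible range $\{1,\dots,n-2\}$ invariant. Thus for $n=4$ the two cases $k=1,2$ are dual and both equal $6$, and for $n=5$ the cases $k=1,3$ are dual and equal $7$; only the self-dual planar case $k=2$ of $\pg{5}{q}$ --- the one of direct interest to this paper --- is not reached from the line bound, and for it I would evaluate $f(5,2)$ directly and check that it, too, equals $7$, consistent with the seven-plane construction of Section~\ref{sec5}.

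The main obstacle is organisational rather than conceptual. The two references normalise their bounds differently --- one in terms of the size of the underlying strong blocking set, the other in terms of the number of subspaces --- and impose slightly different conditions on $q$; the careful step is therefore to match the two statements precisely, to confirm that $q\geq 7$ suffices to invoke the stronger of them in each of the cases at hand, and to ensure that the bound is applied uniformly over all admissible $k$ rather than only at an extremal value. With the correct general statement isolated, the three displayed inequalities drop out by substitution.
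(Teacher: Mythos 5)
Your proposal matches the paper: the paper gives no independent argument for this corollary, stating only that these cases follow from the known lower bounds of \cite{FancsaliSziklai3}, slightly improved in \cite{Denaux} --- exactly the specialization you carry out. Your extra bookkeeping (handling the range of $k$ via duality, checking that $q\geq 7$ suffices, and evaluating the bound directly in the self-dual case $(n,k)=(5,2)$) is a sound way of filling in that citation, not a different route.
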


The above lower bounds are sharp (\cite[Theorem $3.7$, Example $9$]{DavydovEtAl,FancsaliSziklai2}, \cite[Proposition $12$]{BartoliKissMarcuginiPambianco}, \cite[Theorem $3.15$]{BartoliCossidenteMarinoPavese}, \cite[Theorem $33$ and $39$, Corollary $34$ and $35$]{Denaux}), except for the case $(n,k)=(5,2)$. Concerning the latter case, the author of \cite{Denaux} used the following construction to find $8$ planes in higgledy-piggledy arrangment.
%A natural research question that arises asks whether there exist higgledy-piggledy sets of $k$-subspaces in $\pg{n}{q}$ with sizes equal to the lower bounds mentioned above.
%General existing results close to this bound do exist (\cite[Proposition $10$ and Corollary $29$]{FancsaliSziklai3}), but sporadic examples of higgledy-piggledy sets in projective space of small dimension hint that these results might be open to improvement.
%In fact, the author of \cite{Denaux} summarizes all best-known smallest sizes of higgledy-piggledy sets in $\pg{n}{q}$, $n\leq5$, and concludes that there exist such sets of minimal size for all constraint parameters, with one exception.
%
%\begin{thm}[{\cite[Theorem $39$]{Denaux}}]\label{Res_EightHigPigPlanes5}
%    There exist eight pairwise disjoint planes of $\pg{5}{q}$ in higgledy-piggledy arrangement.
%\end{thm}
%
%Corollary \ref{Crl_LowerBound} does not prohibit the existence of a higgledy-piggledy plane set in $\pg{5}{q}$ of size $7$.
%By using field reduction, one can try to find such a set of size $7$ as part of a Desarguesian plane spread; the same method was used to obtain Theorem \ref{Res_EightHigPigPlanes5}:

\begin{crl}\label{Crl_ConstructionMethod}
    Suppose that $\mathcal{P}$ is a point set of $\pg{1}{q^3}$ that is not contained in any $\FF_q$-linear set of rank at most $3$.
    Then $\mathcal{F}(\mathcal{P})$ is a higgledy-piggledy set of pairwise disjoint planes in $\pg{5}{q}$.
\end{crl}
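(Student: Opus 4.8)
The plan is to prove the contrapositive through the standard dictionary between field reduction and linear sets, so that failing to be a strong $2$-blocking set forces containment in a rank-$3$ linear set. First I record that the ``pairwise disjoint planes'' part is immediate: the map $\mathcal{F}=\mathcal{F}_{q,2,3}$ sends the points of $\pg{1}{q^3}$ bijectively onto the elements of a Desarguesian $2$-spread $\spread$ of $\pg{5}{q}$, so the images $\mathcal{F}(P)$, $P\in\mathcal{P}$, are distinct spread planes and hence pairwise disjoint. It remains to show that the point set $\mathcal{F}(\mathcal{P})$ is a strong $2$-blocking set, i.e.\ that it meets every solid (every $3$-dimensional subspace) $\kappa$ of $\pg{5}{q}$ in a set of points spanning $\kappa$; note $n-k=5-2=3$, so the relevant subspaces are exactly the solids.

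Next I would argue by contradiction. Suppose some solid $\kappa$ is met by $\mathcal{F}(\mathcal{P})$ in a set of points that does \emph{not} span $\kappa$. A subset of a solid fails to span it precisely when it lies in a hyperplane of that solid, so $\mathcal{F}(\mathcal{P})\cap\kappa$ is contained in some plane $\pi_0\subseteq\kappa$. The key geometric observation is a dimension count: each spread element $\mathcal{F}(P)$ is a plane, and a plane and a solid in $\pg{5}{q}$ satisfy $\dim\mathcal{F}(P)+\dim\kappa=2+3=5=n$, so they always meet in at least a point. Hence for every $P\in\mathcal{P}$ the intersection $\mathcal{F}(P)\cap\kappa$ is non-empty, and since $\mathcal{F}(P)\subseteq\mathcal{F}(\mathcal{P})$ we obtain $\emptyset\neq\mathcal{F}(P)\cap\kappa\subseteq\mathcal{F}(\mathcal{P})\cap\kappa\subseteq\pi_0$. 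In particular $\mathcal{F}(P)$ meets $\pi_0$ for every $P\in\mathcal{P}$.

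Finally I would translate this back through the definition of a linear set. The condition that $\mathcal{F}(P)$ meets the plane $\pi_0$ for all $P$ says exactly that $\mathcal{F}(P)\in\mathcal{B}(\pi_0)$, i.e.\ that $\mathcal{P}\subseteq L_{\pi_0}$, where $L_{\pi_0}$ is the $\FF_q$-linear set with $\mathcal{F}(L_{\pi_0})=\mathcal{B}(\pi_0)$, of rank $\dim\pi_0+1=3$. This contradicts the hypothesis that $\mathcal{P}$ lies in no $\FF_q$-linear set of rank at most $3$, completing the proof. I do not expect a genuine obstacle here: the argument is a short contrapositive, and the only conceptual point is recognising that the failure of the spanning condition on a solid, combined with the forced meeting of every spread plane with that solid, is \emph{precisely} the defining condition for $\mathcal{P}$ to be contained in a rank-$3$ linear set, with the hyperplane $\pi_0$ of the offending solid playing the role of the defining subspace.
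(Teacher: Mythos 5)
Your proof is correct. It is worth pointing out, however, that the paper does not actually prove this statement itself: its entire proof is the one-line citation ``This is a special case of [Theorem 16, Denaux]'', deferring to a more general result (which produces higgledy-piggledy sets of $(t-1)$-spaces in $\pg{rt-1}{q}$ from point sets of $\pg{r-1}{q^t}$ escaping linear sets of suitable rank). What you have written is a self-contained reconstruction of the mechanism behind that theorem, specialised to $(n,k)=(5,2)$, and every step is sound: disjointness is immediate because the $\mathcal{F}(P)$ are distinct elements of a Desarguesian $2$-spread; the Grassmann bound $\dim\left(\mathcal{F}(P)\cap\kappa\right)\geq 2+3-5=0$ guarantees that every spread plane meets every solid $\kappa$; and if $\mathcal{F}(\mathcal{P})\cap\kappa$ fails to span $\kappa$, it is trapped in a plane $\pi_0$ of $\kappa$, so that every $\mathcal{F}(P)$, $P\in\mathcal{P}$, meets $\pi_0$ --- which, by the paper's definition $\mathcal{F}(L_{\pi_0})=\mathcal{B}(\pi_0)$, says exactly that $\mathcal{P}\subseteq L_{\pi_0}$, a linear set of rank $3$, contradicting the hypothesis. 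The trade-off between the two routes is the usual one: the citation keeps the paper short and inherits the full generality of the external theorem, while your direct argument makes the corollary verifiable without consulting the reference, and it makes transparent that the correspondence ``failure of strong blocking on a solid $\leftrightarrow$ containment in a rank-$3$ linear set'' is nothing deeper than the dimension formula plus the definition of $\mathcal{B}(\pi)$. Your argument also visibly generalises to arbitrary $t$ and rank, which is presumably how the cited theorem is proved.
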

\begin{proof}
    This is a special case of \cite[Theorem $16$]{Denaux}.
\end{proof}

%If $q>5$, a higgledy-piggledy plane set in $\pg{5}{q}$ constructed in this way is optimal in the following sense:
Any higgledy-piggledy set of planes constructed in this way consists of disjoint planes; however, it is worth noting that this is not a restriction:
\begin{prop}[{\cite[Proposition $40$]{Denaux}}]
    If $q\geq 7$, then any seven planes of $\pg{5}{q}$ in higgledy-piggledy arrangement are pairwise disjoint.
\end{prop}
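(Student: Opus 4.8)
The plan is to reformulate the higgledy-piggledy (strong $2$-blocking) condition as a statement about projections onto $\pg{2}{q}$, and then to show that two intersecting planes always admit a common transversal plane, which violates the reformulated condition.

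First I would record the following reformulation. Writing $B=\pi_1\cup\cdots\cup\pi_7$, the set $B$ is a strong $2$-blocking set if and only if for every plane $\rho$ of $\pg{5}{q}$ the projection of $B\setminus\rho$ from $\rho$ onto the quotient $\pg{5}{q}/\rho\cong\pg{2}{q}$ is surjective: a solid $\Sigma\supseteq\rho$ fails to be spanned by $B\cap\Sigma$ exactly when $B\cap\Sigma\subseteq\rho$, i.e.\ when the point of $\pg{2}{q}$ corresponding to $\Sigma$ is not covered. If some $\pi_i$ is disjoint from $\rho$, then $\pi_i$ projects isomorphically onto all of $\pg{2}{q}$ and surjectivity is automatic; hence the only planes $\rho$ that can cause a failure are those meeting every $\pi_i$. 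For such a $\rho$, each $\pi_i$ meets $\rho$ in at least a point, so $\pi_i$ projects into a line (or a point) of $\pg{2}{q}$, and the image of $B\setminus\rho$ lies in at most seven lines. Since seven lines cover at most $7(q+1)<q^2+q+1$ points when $q\geq 7$, the projection cannot be surjective. Thus, for $q\geq 7$, the seven planes are in higgledy-piggledy arrangement if and only if no plane of $\pg{5}{q}$ meets all seven of them.

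It then suffices to prove the contrapositive: if two of the planes, say $\pi_1$ and $\pi_2$, share a point $P$ (which also covers the case where they meet in a line), then there is a plane meeting all seven. I would search for such a transversal among the planes through $P$, since any plane through $P$ automatically meets $\pi_1$ and $\pi_2$. Projecting from $P$, the planes through $P$ correspond to the lines of $\pg{5}{q}/P\cong\pg{4}{q}$, while each remaining $\pi_j$ ($j=3,\dots,7$) projects to a plane $\overline{\pi_j}$ of $\pg{4}{q}$ (the case $P\in\pi_j$ only makes the transversal easier to find). So the problem reduces to producing a line of $\pg{4}{q}$ that meets the five planes $\overline{\pi_3},\dots,\overline{\pi_7}$. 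The condition that a line meet a fixed plane of $\pg{4}{q}$ is a single Schubert condition of codimension one in the six-dimensional Grassmannian of lines, so the transversal lines form a one-dimensional family whose expected count is positive.

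The main obstacle is precisely this last step: guaranteeing that the one-dimensional variety of transversal lines actually carries an $\FF_q$-rational point for every $q\geq 7$, rather than merely being nonempty over the algebraic closure. I would handle this by a direct incidence count over $\FF_q$: fixing a point on $\overline{\pi_3}$ and a point on $\overline{\pi_4}$ determines a line meeting those two planes, and I would count the pairs for which the resulting line also meets $\overline{\pi_5},\overline{\pi_6},\overline{\pi_7}$, aiming to show this count is positive once $q\geq 7$, while treating degenerate positions (some $\overline{\pi_j}$ coinciding, or meeting in larger-than-expected dimension) as separate and easier cases. Establishing this rational-point existence robustly and uniformly over all configurations of the five planes is the crux; the reformulation of the first paragraph is what reduces the entire proposition to this single transversal-existence lemma.
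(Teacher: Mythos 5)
Your first two paragraphs are correct and are exactly the right reduction. The quotient reformulation of the strong $2$-blocking condition is valid, and the threshold computation $7(q+1)<q^2+q+1$, which holds precisely for $q\geq 7$, is indeed why the hypothesis $q\geq 7$ appears: for such $q$, seven planes are in higgledy-piggledy arrangement if and only if no plane of $\pg{5}{q}$ meets all seven. The projection from a common point $P\in\pi_1\cap\pi_2$ to $\pg{4}{q}$ is also sound (a line of the quotient meeting the image $\overline{\pi_j}$ does pull back to a plane through $P$ meeting $\pi_j$, since that plane meets the solid $\langle P,\pi_j\rangle$ in a line through $P$, which in turn meets the hyperplane $\pi_j$ of that solid). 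The genuine gap is your final step: you reduce everything to the lemma that any five planes of $\pg{4}{q}$ admit a common transversal line over $\FF_q$, and then you do not prove it — you sketch a Schubert-calculus heuristic and an incidence count whose positivity "uniformly over all configurations" is precisely what would need to be established. As written, the argument is incomplete at its crux, and your worry is legitimate in spirit: nonemptiness of the transversal variety over $\overline{\FF}_q$ (it is the intersection of five hyperplane sections of the six-dimensional Grassmannian in its Plücker embedding) says nothing a priori about $\FF_q$-rational points, and the analogous statement for four lines in $\pg{3}{q}$ genuinely fails over $\FF_q$.

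However, the lemma is true and needs none of this machinery; it follows synthetically from the fact that any two planes of $\pg{4}{q}$ meet. Let $\alpha_1,\dots,\alpha_5$ be planes of $\pg{4}{q}$ and pick $A\in\alpha_1\cap\alpha_2$. If $A\in\alpha_5$, pick $B\in\alpha_3\cap\alpha_4$ and the line $AB$ (any line through $A$ if $B=A$) is a transversal. Otherwise $\Sigma:=\langle A,\alpha_5\rangle$ is a solid, and since $\Sigma$ is a hyperplane of $\pg{4}{q}$, each of $\alpha_3\cap\Sigma$ and $\alpha_4\cap\Sigma$ contains a line, say $m_3$ and $m_4$. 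If $A\in m_3$ (or $A \in m_4$), take $D\in m_4$ (resp.\ $D \in m_3$) and the line $AD\subseteq\Sigma$ meets all five planes ($\alpha_5$ because it is a hyperplane of $\Sigma$). In the remaining case, the plane $\tau:=\langle A,m_3\rangle\subseteq\Sigma$ meets the line $m_4$ in a point $C$ (a plane and a line of a solid always meet); if $C=A$ we are in an easy case as before, and otherwise the line $AC$ lies in $\tau$, hence meets $m_3\subseteq\alpha_3$, contains $C\in m_4\subseteq\alpha_4$, passes through $A\in\alpha_1\cap\alpha_2$, and lies in $\Sigma$, hence meets $\alpha_5$. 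With this lemma in place your proof closes completely; note also that the paper at hand gives no proof of this proposition but imports it from \cite{Denaux}, whose argument likewise rests on the characterisation via transversal planes that you derived in your first paragraph.
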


Using the results obtained in previous sections, we are able to show that the lower bound of Corollary \ref{Crl_LowerBound} is sharp in the case $n=5$:
%It is no wonder that the author of \cite[Open Problem $42$]{Denaux} proposed this as an open problem.
%Fortunately, thanks to the results obtained in previous sections, we are able to prove the existence of such an optimal set.

\begin{thm}
    There exist seven planes of $\pg{5}{q}$ in higgledy-piggledy arrangement.
\end{thm}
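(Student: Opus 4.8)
The goal is to exhibit seven planes of $\pg{5}{q}$ in higgledy-piggledy arrangement, which by Corollary \ref{Crl_LowerBound} is optimal. The plan is to use the construction bridge provided by Corollary \ref{Crl_ConstructionMethod}: it suffices to produce a set $\mathcal{P}$ of seven points in $\pg{1}{q^3}$ that is \emph{not contained in any $\FF_q$-linear set of rank at most $3$}, since then $\mathcal{F}(\mathcal{P})$ is automatically a higgledy-piggledy set of seven (pairwise disjoint) planes in $\pg{5}{q}$. Thus the whole problem is transported from a configuration question about $7$ planes in $\pg{5}{q}$ to a much more tractable question about $7$ points on the projective line $\pg{1}{q^3}$.

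The $\FF_q$-linear sets of rank at most $3$ in $\pg{1}{q^3}$ are exactly: single points and $\FF_q$-sublines (rank $1$ and $2$), and the rank-$3$ linear sets, which fall into a small number of types, namely $\FF_q$-sublines themselves (when not scattered), clubs, and scattered linear sets of rank $3$. The crucial structural input is that, by Theorem \ref{thmscattered}, a scattered linear set of rank $3$ containing $P_\infty$ corresponds under the ABB-map to (the affine part of) a hyperbolic quadric, and more relevantly that every rank-$3$ linear set has at most $q^2+q+1$ points. So the task reduces to choosing seven points in general position with respect to all these linear sets. First I would fix $P_\infty$ and a coordinate system, then argue that a rank-$3$ linear set is determined by few of its points: concretely, a scattered linear set of rank $3$ through two given points is (up to the bounded ambiguity recorded in Result \ref{Res_LinearSetClub}(b)(c)) essentially pinned down, and similarly a club is controlled by its head together with two further points. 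The heart of the argument is then a counting or direct-construction step: I would either count the rank-$\leq 3$ linear sets that can contain a fixed partial configuration and show that, for $q$ large enough, seven points can be chosen greedily to avoid all of them; or, preferably, write down an explicit seven-point set (e.g.\ using the points $P_x$ of the ABB-coordinatisation from Subsection \ref{subsubind}) and verify directly that no five of them, together with any admissible completion, lie on a common rank-$3$ linear set.

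Concretely, the explicit route I would pursue exploits the machinery already built in the paper. Using the normal-rational-curve description $\mathcal{C}^{a,b}$ and the points $P_{au-bv}$ from Lemma \ref{NRCs}, together with the design $\mathcal{H}$ (isomorphic to $\pg{2}{q}$ when $t=3$) from Theorem \ref{Hisprojective}, one has a clean algebraic handle on which points lie in a common subline or club. A rank-$3$ linear set contains at most two $\FF_q$-sublines through any two of its points, and its total size is $q^2+q+1$; so I would select seven values $x_1,\dots,x_7\in\FF_{q^3}^\ast$ giving distinct points $P_{x_1},\dots,P_{x_7}$ and impose the finitely many inequalities expressing that no six of them are simultaneously containable in a rank-$3$ linear set. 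Because the number of rank-$\leq 3$ linear sets through five fixed general points is bounded (each scattered rank-$3$ set is determined up to the small ambiguity of Result \ref{Res_LinearSetClub}, and clubs are governed by Proposition \ref{Prop_NumberClubsHeadNotInftyevenmoregeneral} and Theorem \ref{thmclub}), a counting argument shows the number of points that are `forced' by any five already-chosen points is at most roughly a constant multiple of $q^2$, which is smaller than $q^3+1$ for $q$ sufficiently large, so a seventh admissible point always exists.

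The main obstacle, and where the real care is needed, is the case analysis ensuring avoidance across \emph{all} types of rank-$3$ linear sets simultaneously, including the degenerate possibility that the seven points are spread across two different rank-$3$ linear sets sharing some points, and the non-uniqueness/non-simplicity phenomena flagged before Lemma \ref{heads} (a point set can be a club for several subspaces $\pi$, and the head need not be intrinsic). I expect the delicate step to be bounding the number of rank-$3$ linear sets containing a fixed small subset of the $P_{x_i}$ tightly enough to push the greedy/counting argument through for the smallest admissible $q$ (here $q\geq 7$, matching the hypothesis of Corollary \ref{Crl_LowerBound}); the lemmas of this paper, especially Result \ref{Res_LinearSetClub} and Theorem \ref{thmscattered}, are precisely what make these counts available, and closing the small-$q$ cases may require an explicit computer-verified example rather than the asymptotic count.
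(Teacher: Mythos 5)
Your opening move --- reducing to seven points of $\pg{1}{q^3}$ not contained in a common $\FF_q$-linear set of rank at most $3$ via Corollary \ref{Crl_ConstructionMethod}, and noting that only rank exactly $3$ matters --- is precisely the paper's first step, and finishing by a counting argument is also the paper's strategy. However, the quantitative core of your counting route has a genuine gap: you assert that the number of rank-$\leq 3$ linear sets through five fixed general points is bounded, so that the set of points ``forced'' by the already-chosen points has size $O(q^2)$. This is unproven and, in fact, off by a factor of $q$. By the paper's own counts (Propositions \ref{heads2}, \ref{Prop_NumberClubsHeadNotInftyevenmoregeneral} and \ref{Prop_NumberScattered}), through the single point $P_\infty$ there are already $(q^3+q^2+q)+q^3(q^2+q+1)+\frac{1}{2}q^3(q^3-1)\approx\frac{1}{2}q^6$ rank-$3$ linear sets, each of size at most $q^2+q+1$; a straightforward averaging heuristic then leaves about $\frac{1}{2}q^2$ such sets through five general points and about $\frac{1}{2}q$ through six, so the union to be avoided when picking the next point can a priori have size of order $\frac{1}{2}q^3$ --- comparable to $q^3+1$, not $O(q^2)$. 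The greedy argument thus survives only by a constant-factor margin, and no local statement of the kind ``boundedly many rank-$3$ linear sets through five points'' is available in the paper; making your step rigorous requires exactly the precise global counts, which you do not carry out (nor do you carry out your alternative explicit-construction route).

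What the paper does instead is a single global double count that avoids any claim about linear sets through several fixed points. Fixing $P_\infty$, it counts the tuples $(P_1,\dots,P_6,L)$ with the $P_i\neq P_\infty$ distinct and $L$ a rank-$3$ linear set containing $P_\infty$ and all $P_i$: this number is at most $(q^3+q^2+q)c+q^3(q^2+q+1)c+\frac{1}{2}q^3(q^3-1)d$, where $c$ (resp.\ $d$) is the number of ordered $6$-tuples of points inside a club (resp.\ scattered set) through $P_\infty$, and this is strictly smaller than the number $q^3(q^3-1)(q^3-2)(q^3-3)(q^3-4)(q^3-5)$ of all ordered $6$-tuples --- the dominant scattered term wins only by the factor $\frac{1}{2}$, which is why the exact counts matter. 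Hence some $6$-tuple lies in no rank-$3$ linear set through $P_\infty$, and adjoining $P_\infty$ gives the seven points. Note also that the inputs this argument needs are the counting propositions (whose proofs rest on Lemma \ref{heads} and Result \ref{Res_LinearSetClub}(c)), not Theorems \ref{thmclub} and \ref{thmscattered} as you suggest: the ABB characterisations are what the paper uses later for its \emph{explicit} higgledy-piggledy construction (Proposition \ref{construction}). Finally, as you anticipated, the small cases $q\leq 5$ are handled by computer.
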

\begin{proof}
If $q\leq5$, we can easily verify the statement using a computer package such as GAP (see e.g.\ \cite[Code Snippet $56$]{Denaux})\footnote{In fact, using similar code, one can check that there exist in fact $6$ planes of $\pg{5}{3}$ and $5$ planes of $\pg{5}{2}$ in higgledy-piggledy arrangement.}.
Hence, assume that $q\geq5$ for the remainder of this proof.
By Corollary \ref{Crl_ConstructionMethod}, it is sufficient to pick $7$ points in $\pg{1}{q^3}$ such that no linear set of rank at most $3$ contains all these $7$ points. First note that if $7$ points are contained in a linear set of rank $<3$, they are also contained in a linear set of rank $3$. Hence, we only need to show that it is possible to pick $7$ points, not contained in a linear set of rank $3$.

Pick a point $P_\infty$ in $\pg{1}{q^3}$. Then we know from Proposition \ref{heads2} that there are $q^3+q^2+q$ clubs with head $P_\infty$, from Proposition \ref{Prop_NumberClubsHeadNotInftyevenmoregeneral} that there are $q^3(q^2+q+1)$ clubs through $P_\infty$ with head different from $P_\infty$, and from Proposition \ref{Prop_NumberScattered} that there are $\frac{1}{2}q^3(q^3-1)$ scattered linear sets containing $P_\infty$.

We will count the set $S=\{(P_1,P_2,P_3,P_4,P_5,P_6,L)\}$ where $P_i\neq P_\infty$ are different points of $\pg{1}{q^3}$ and $L$ is a linear set of rank $3$ containing $P_\infty$ and $P_i$, $i=1,\ldots,6$.
We have that $$|S|=(q^3+q^2+q)c+q^3(q^2+q+1)c+\frac{1}{2}q^3(q^3-1)d,$$ where $c=q^2(q^2-1)(q^2-2)(q^2-3)(q^2-4)(q^2-5)$ is the number of ways to pick $6$ different points different from $P_\infty$ in a club through $P_\infty$, and $d=(q^2+q)(q^2+q-1)(q^2+q-2)(q^2+q-3)(q^2+q-4)(q^2+q-5)$ is the number of ways to pick $6$ points different from $P_\infty$ in a scattered linear set through $P_\infty$.

If all choices of $6$ points $P_1,\ldots,P_6$ would be contained in at least one linear set of rank $3$ through $P_\infty$, then $|S|\geq q^3(q^3-1)(q^3-2)(q^3-3)(q^3-4)(q^3-5)$, a contradiction for $q\geq 3$.

\end{proof}

We will now use the results of this paper to explicitely construct a set of $7$ planes in $\pg{5}{q}$ in higgledy-piggledy arrangement. We start by writing down explicit equations of the set of conics in $\pg{2}{q}$ containing $3$ fixed conjugate points.

\begin{lm}\label{juistevorm} Let $\omega\in \FF_{q^3}\setminus\FF_q$ be a generator of $(\FF_{q^3}^*,.)$ satisfying $\omega^3+\lambda_1\omega^2+\lambda_2\omega+\lambda_3=0$. Then the conics in $\pg{2}{q}$ whose extension to $\FF_{q^3}$ contains the points $(1,\omega,\omega^2)$, $(1,\omega^q,\omega^{2q})$, $(1,\omega^{q^2},\omega^{2q^2})$ are given by
\begin{align}
g_{d,e,f}(X_0,X_1,X_2):=(\lambda_3e-\lambda_1\lambda_3f)X_0^2+(\lambda_2e+(\lambda_3-\lambda_1\lambda_2)f)X_0X_1&+\nonumber\\(\lambda_1e+(\lambda_2-\lambda_{1}^2)f-d)X_0X_2+dX_1^2+eX_1X_2+fX_2^2&=0,\label{specialconics}\end{align}
with $d,e,f\in \FF_q$ not all zero. 
%That is,
%\begin{itemize}
%\item[(a)] $f_{d,e}:=(\lambda_3e-\lambda_1\lambda_3)X_0^2+(\lambda_2e+\lambda_3-\lambda_1\lambda_2)X_0X_1+(\lambda_1e+\lambda_2-\lambda_{1}^2-d)X_0X_2+dX_1^2+eX_1X_2+X_2^2=0,$ where $d,e\in \FF_q$;
%\item[(b)] $f_d:=\lambda_3X_0^2+\lambda_2X_0X_1+(\lambda_1-d)X_0X_2+X_1X_1=0$, $d\in \FF_q$;
%\item[(c)] $f_\infty:=X_0X_2-X_1^2=0$.
%\end{itemize}
\end{lm}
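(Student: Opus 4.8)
The plan is to determine the linear system of conics through three fixed conjugate points by direct elimination, and then simply verify the given formula. A conic in $\pg{2}{q}$ has the general form $c_{00}X_0^2+c_{01}X_0X_1+c_{02}X_0X_2+c_{11}X_1^2+c_{12}X_1X_2+c_{22}X_2^2=0$ with coefficients in $\FF_q$. The space of such conics is $6$-dimensional over $\FF_q$. Requiring the $\FF_{q^3}$-extension to pass through the point $(1,\omega,\omega^2)$ imposes one $\FF_{q^3}$-linear condition on the coefficients, namely
\begin{equation*}
c_{00}+c_{01}\omega+c_{02}\omega^2+c_{11}\omega^2+c_{12}\omega^3+c_{22}\omega^4=0\textnormal{,}
\end{equation*}
and since the coefficients lie in $\FF_q$ while the conditions for the three conjugate points are Frobenius-conjugates of one another, imposing passage through all three conjugate points is equivalent to imposing the single condition above together with its property of having $\FF_q$-coefficients: expanding $\omega^3$ and $\omega^4$ via the minimal polynomial $\omega^3=-\lambda_1\omega^2-\lambda_2\omega-\lambda_3$ and collecting powers $1,\omega,\omega^2$ yields three $\FF_q$-linear equations in the six unknowns $c_{ij}$.

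Concretely, first I would compute $\omega^4=\omega\cdot\omega^3=-\lambda_1\omega^3-\lambda_2\omega^2-\lambda_3\omega=(\lambda_1^2-\lambda_2)\omega^2+(\lambda_1\lambda_2-\lambda_3)\omega+\lambda_1\lambda_3$, substitute both $\omega^3$ and $\omega^4$ into the single $\FF_{q^3}$-condition, and then set the coefficients of $1$, $\omega$, and $\omega^2$ separately equal to zero. This produces a homogeneous system of three $\FF_q$-linear equations in the six coefficients; because the three points are in general position and span the plane over $\FF_{q^3}$, the rank of this system is exactly $3$, so the solution space is $3$-dimensional, matching the three free parameters $d,e,f$. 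I would then solve the system, taking $d=c_{11}$, $e=c_{12}$, $f=c_{22}$ as the free parameters and expressing $c_{00},c_{01},c_{02}$ in terms of them; substituting back into the general conic and comparing with \eqref{specialconics} completes the verification.

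The main obstacle is purely bookkeeping: the substitution of $\omega^3$ and especially $\omega^4$ must be carried out carefully, and the three resulting linear equations must be solved in the correct order (the equation from the constant term gives $c_{00}$, the equation from the $\omega$-coefficient gives $c_{01}$, and the $\omega^2$-coefficient equation gives $c_{02}$) so that the coefficients match the stated expressions $\lambda_3e-\lambda_1\lambda_3f$, $\lambda_2e+(\lambda_3-\lambda_1\lambda_2)f$, and $\lambda_1e+(\lambda_2-\lambda_1^2)f-d$ exactly. There is no conceptual difficulty: the fact that the extension contains all three conjugates is automatic once the coefficients are forced to lie in $\FF_q$, since the Frobenius permutes the three conditions. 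Finally I would note that the requirement that $d,e,f$ are not all zero simply excludes the zero form, and that the resulting $\theta_2=q^2+q+1$ conics form the circumscribed bundle already identified in the text with the design $\mathcal{H}$ for $t=3$, so no separate check of non-degeneracy or general position is needed here.
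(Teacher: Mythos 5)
Your proposal is correct and follows essentially the same route as the paper: both substitute $(1,\omega,\omega^2)$ into a general conic with $\FF_q$-coefficients, reduce $\omega^3$ and $\omega^4$ via the minimal polynomial, use the $\FF_q$-independence of $1,\omega,\omega^2$ to extract three linear equations, and solve for the coefficients of $X_0^2$, $X_0X_1$, $X_0X_2$ in terms of $d,e,f$, with the passage through the two conjugate points being automatic by Frobenius. The resulting system matches the paper's exactly, so there is nothing to add.
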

\begin{proof} An arbitrary conic $\mathcal{C}$ in $\pg{2}{q}$ has equation $aX_0^2+bX_0X_2+cX_0X_2+dX_1^2+eX_1X_2+fX_2^2=0$ where $a,b,c,d,e,f\in \FF_q$. Note that if $(1,\omega,\omega^2)$ lies on the extension of $\mathcal{C}$ to $\pg{2}{q^3}$, then $(1,\omega^q,\omega^{2q})$ and $(1,\omega^{q^2},\omega^{2q^2})$ also lie on this extension. Expressing that $(1,\omega,\omega^2)$ lies on $\mathcal{C}$, using that $\omega^4=(\lambda_1^2-\lambda_2)\omega^2+(\lambda_1\lambda_2-\lambda_3)\omega+\lambda_1\lambda_3$, and that $1,\omega,\omega^2$ are $\FF_q$-independent, we find the following system of equations:
\begin{align*} a-\lambda_3e+\lambda_1\lambda_3f&=0\\
b-\lambda_2e+(\lambda_1\lambda_2-\lambda_3)f&=0\\
c+d-\lambda_1e+(\lambda_1^2-\lambda_2)f&=0.\qedhere
\end{align*}
%If we let $f=1$, we find the $q^2$ conics $f_{d,e}$ of the form (a), if $f=0,e=1$, we find the $q$ conics $f_d$ of the form  $(b)$, and if $f=e=0$, $d=1$, we find the conic $f_\infty$ given by (c).
\end{proof}

%Consider a conic $\mathcal{C}$ with equation $f(X_0,X_1,X_2)=0$ in the plane $\pi$ and embed $\pi$ as a hyperplane $X_3=0$ in $\pg{3}{q}$. Then all quadrics intersecting $\pi$ in $\mathcal{C}$ are given by $f(X_0,X_1,X_2)+X_3(uX_0+vX_1+wX_2+tX_3)=0$ for some $u,v,w,t\in \FF_q$.

\begin{prop} \label{construction} Let $P_i(x_0^{(i)},x_1^{(i)},x_2^{(i)},1)$, $i=1,\ldots, 6$ be six non-coplanar points contained in a non-degenerate elliptic quadric intersecting the plane $\pi:X_3=0$ in the conic $X_0X_2-X_1^2=0$. Consider the quadrics  \begin{align} \mathcal{Q}(d,e,f,u,v,w,t,X_0,X_1,X_2,X_3):=g_{d,e,f}(X_0,X_1,X_2)+X_3(uX_0+vX_1+wX_2+tX_3)=0. \label{quadrics}\end{align} 
Let $A$ be the $(6\times 7)$-matrix whose $i$-th row $(A)_i$ satisfies $$(A)_i[d,e,f,u,v,w,t]^T=\mathcal{Q}(d,e,f,u,v,w,t,x_0^{(i)},x_1^{(i)},x_2^{(i)},1).$$ If $rk(A)=6$, then the points $P_1,\ldots,P_6$, together with $P_\infty$, are the ABB-representation of a set of seven points in $\pg{1}{q^3}$ such that, under field reduction, these seven points form a higgledy-piggledy set of $7$ planes in $\pg{5}{q}$. That is, $\{\mathcal{F}(\phi^{-1}(P_i))\mid 1\leq i\leq 6\}\cup \mathcal{F}(P_\infty)$ is a set of seven planes in $\pg{5}{q}$ in higgledy-piggledy arrangement.
\end{prop}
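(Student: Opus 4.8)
The plan is to reduce everything to Corollary \ref{Crl_ConstructionMethod}: it suffices to show that the seven points $\phi^{-1}(P_1),\ldots,\phi^{-1}(P_6),P_\infty$ are not contained in any $\FF_q$-linear set of rank at most $3$ of $\pg{1}{q^3}$. Since $P_\infty$ is among them, any linear set containing all seven passes through $P_\infty$; and as containment in a rank-$\leq 2$ linear set implies containment in a rank-$3$ one through $P_\infty$ (extend the defining subspace), I only need to exclude rank-$3$ linear sets through $P_\infty$. Passing through $\phi$, this means showing that the six affine points $P_1,\ldots,P_6$ of $\Pi\cong\pg{3}{q}$ do not all lie on the ABB-representation of $L\setminus\set{P_\infty}$ for any rank-$3$ linear set $L\ni P_\infty$.

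First I would recall that a rank-$3$ linear set through $P_\infty$ is scattered, a club with head $P_\infty$, or a club with head $H\neq P_\infty$, since a defining plane of $\pg{5}{q}$ meets at most one spread element in a line. The key observation is that in every case the affine ABB-representation is the affine point set of a quadric of the family \eqref{quadrics}: by Theorem \ref{thmscattered} a scattered set yields a hyperbolic quadric meeting $\pi_\infty$ in a conic $g_{d,e,f}$ of Lemma \ref{juistevorm}; by Proposition \ref{trivial} a club with head $P_\infty$ yields an affine plane, i.e.\ the affine part of a plane-pair $X_3(uX_0+vX_1+wX_2+tX_3)$ (the member $d=e=f=0$); and by Theorem \ref{thmclub} a club with head $H\neq P_\infty$ yields a cone with affine vertex over a $1$-dimensional $\mathcal{H}$-subspace, which for $t=3$ is precisely a conic $g_{d,e,f}$, so the cone again meets $\pi_\infty$ in such a conic and lies in \eqref{quadrics}. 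Crucially, all three quadrics are hyperbolic, cones, or plane-pairs, and in particular none is elliptic.

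Next I would encode membership as linear algebra. Since the monomials of $g_{d,e,f}$ (in $X_0,X_1,X_2$) are disjoint from those of $X_3(uX_0+vX_1+wX_2+tX_3)$, the parametrisation $(d,e,f,u,v,w,t)\mapsto\mathcal{Q}$ is injective, and $P_i$ lies on $\mathcal{Q}(d,e,f,u,v,w,t)$ exactly when $(A)_i[d,e,f,u,v,w,t]^T=0$. Hence the quadrics of the family through all of $P_1,\ldots,P_6$ correspond, up to scalar, to the nonzero vectors of $\ker A$. The hypothesis supplies a \emph{good} such quadric: evaluating Lemma \ref{juistevorm} at $d=-1,\,e=f=0$ gives $g_{-1,0,0}=X_0X_2-X_1^2$, so the given non-degenerate elliptic quadric $\mathcal{E}$ through $P_1,\ldots,P_6$ belongs to \eqref{quadrics} and contributes a nonzero element of $\ker A$.

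Finally I would obtain the contradiction. If some rank-$3$ linear set through $P_\infty$ contained all seven points, then by the second paragraph $P_1,\ldots,P_6$ would lie on a quadric $\mathcal{Q}_L$ of the family that is hyperbolic, a cone, or a plane-pair, hence not elliptic and so distinct from $\mathcal{E}$; this produces two independent vectors in $\ker A$, forcing $\dim\ker A\geq 2$, i.e.\ $\mathrm{rk}(A)\leq 5$, against $\mathrm{rk}(A)=6$. Thus no rank-$3$ linear set through $P_\infty$ contains all seven points, and Corollary \ref{Crl_ConstructionMethod} concludes. I expect the main obstacle to be the case analysis of the second paragraph, namely verifying that the ABB-representations of all three types of rank-$3$ linear sets through $P_\infty$ genuinely sit inside the single family \eqref{quadrics} — in particular recognising a cone over an $\mathcal{H}$-block as a cone over a conic $g_{d,e,f}$, and an affine plane as the degenerate $d=e=f=0$ member; once this structural dictionary is fixed, the injectivity check and the final dimension count are routine.
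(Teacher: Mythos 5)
Your proof is correct and takes essentially the same route as the paper's: reduce via Corollary \ref{Crl_ConstructionMethod}, invoke Proposition \ref{trivial}, Theorem \ref{thmclub} and Theorem \ref{thmscattered} to place every rank-$3$ linear set through $P_\infty$ inside a non-elliptic quadric of the family \eqref{quadrics}, and use $\mathrm{rk}(A)=6$ to force $\mathcal{E}$ to be the unique family member through the six points. The only differences are presentational — you make explicit the classification of rank-$3$ linear sets, the injectivity of the parametrisation, and the kernel-dimension count, all of which the paper leaves implicit.
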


\begin{proof} By Corollary \ref{Crl_ConstructionMethod}, it is sufficient to construct a set of $7$ points in $\pg{1}{q^3}$ such that no linear set of rank at most $3$ contains all these $7$ points. Embed the line $L=\pg{1}{q^3}$ in $\pg{2}{q^3}$ and select one point $P_\infty$ on $L$. Let $\ell_\infty$ be a line of $\pg{2}{q^3}$ through $P_\infty$, different from $L$ and consider the ABB-representation of $\pg{2}{q^3}$ with $\ell_\infty$ as line at infinity. Then the set of points $\mathcal{F}(P)$, with $P$ a point of $L$ different from $P_\infty$, defines a $3$-dimensional subspace $\Pi$. We coordinatise in such way that the points in $\Pi$ have coordinates $(x_0,x_1,x_2,x_3)$ such that the points with $x_3=0$ are the points in the plane $\pi=\mathcal{F}(P_\infty)$ and the three conjugate points defining $\pi$ are $(1,\omega,\omega^2)$, $(1,\omega^q,\omega^{2q})$, $(1,\omega^{q^2},\omega^{2q^2})$.
In view of Proposition \ref{trivial}, Theorem \ref{thmclub}, and Theorem \ref{thmscattered}, we need to find six affine points of $\Pi$ such that these are not contained in a plane, nor a cone with vertex not in $\pi$ and base a conic whose extension contains the $3$ conjugate points, nor a hyperbolic quadric through such a conic. All (possibly degenerate) quadrics meeting in a conic of the form \eqref{specialconics} are given by an equation of the form \begin{align} f_{d,e,f}(X_0,X_1,X_2)+X_3(uX_0+vX_1+wX_2+tX_3)=0. \label{quadricsform}\end{align}
So if we pick six points, contained in an elliptic quadric $\mathcal{E}$ meeting $\pi$ in the conic  $X_0X_2-X_1^2=0$, we simply need to show that $\mathcal{E}$ is the only quadric with equation of the form \eqref{quadricsform} through those $6$ points. This happens if and only if the homogeneous system of $6$ equations in the variables $d,e,f,u,v,w,t$ that arises from substituting the coordinates of the six points has a unique solution up to scalar multiple, which happens if and only if its coefficient matrix $A$ has $rk(A)=6$.
\end{proof}

\bigskip
In order to give an explicit construction of six such points and make the computations easier, we will restrict ourselves to those values of $q$ such that there is a primitive cubic polynomial of a particular form.

\begin{thm} \begin{itemize} \item[(a)] Let $q$ be odd, $q\equiv 1\pmod{3}$. Let $a$ be a non-square in $\FF_q$, where $a\neq \frac{1}{2}$. The six points $(1,0,-a,1)$,$(1,0,-a,-1)$,$(1,1,1-a,1)$,$(1,-1,1-a,1)$,$(1,1,1-a,-1)$, $(1,-1,1-a,-1)$ give rise to a higgledy-piggledy set of $7$ planes in $\pg{5}{q}$.
\item[(b)] Let $q$ be even such that there is an irreducible polynomial of the form $\omega^3+\omega+1=0$. Let $a\in \FF_q$ with $Tr(a)=1$, $a\neq 1$. The six points $(1,0,a,1)$,$(1,1,a,1)$,$(a,0,1,1)$, $(a,1,1,1)$, $(1,a,a^2,1)$, $(a^2,a,1,1)$ give rise to a higgledy-piggledy set of $7$ planes in $\pg{5}{q}$.\end{itemize}
\end{thm}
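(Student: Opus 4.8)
The plan is to reduce each explicit statement to a verification that the six given points satisfy the hypothesis of Proposition \ref{construction}, namely that (i) the six points lie on a non-degenerate elliptic quadric meeting the plane $\pi:X_3=0$ in the conic $X_0X_2-X_1^2=0$, and (ii) the associated $(6\times7)$-coefficient matrix $A$ has rank $6$. Once both conditions hold, Proposition \ref{construction} directly yields the seven planes in higgledy-piggledy arrangement, so no further geometric argument is needed. The real content is purely computational: pick a good ambient elliptic quadric, check the six points lie on it, and check the rank condition.

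First I would produce the ambient elliptic quadric explicitly. In case (a), with $q$ odd and $a$ a non-square, the quadric $\mathcal{E}:X_0X_2-X_1^2+aX_3^2=0$ (or a suitable variant) is a natural candidate: setting $X_3=0$ recovers the conic $X_0X_2-X_1^2=0$, and the quadratic form is anisotropic on the $X_3$-direction precisely because $a$ is a non-square, forcing $\mathcal{E}$ to be elliptic rather than hyperbolic. A quick substitution confirms the six listed points satisfy this equation: for the first two points $(1,0,-a,\pm1)$ one gets $-a-0+a=0$, and for the four points $(1,\pm1,1-a,\pm1)$ one gets $(1-a)-1+a=0$. In case (b), with $q$ even and $\mathrm{Tr}(a)=1$, the analogous elliptic quadric is $X_0X_2-X_1^2+aX_3^2=0$ (with the field of characteristic $2$ meaning $-X_1^2=X_1^2$, and ellipticity governed by the trace condition $\mathrm{Tr}(a)=1$ via the standard criterion for the binary quadratic part $X_1^2+aX_3^2$ to be irreducible over $\FF_q$); again one substitutes each of the six points to confirm membership.

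The main work, and the main obstacle, is verifying $\mathrm{rk}(A)=6$. For each point $P_i=(x_0^{(i)},x_1^{(i)},x_2^{(i)},1)$ the $i$-th row of $A$ consists of the coefficients of $d,e,f,u,v,w,t$ in the expansion of $\mathcal{Q}(d,e,f,u,v,w,t,x_0^{(i)},x_1^{(i)},x_2^{(i)},1)$; these are read off from \eqref{specialconics} and \eqref{quadrics} once the specific $\lambda_i$ coming from the chosen $\omega$ are inserted. I would assemble $A$ symbolically and then exhibit a nonzero $6\times6$ minor, equivalently show the six rows are linearly independent, equivalently show that no quadric of the form \eqref{quadricsform} other than $\mathcal{E}$ itself passes through all six points. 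The geometric interpretation is the cleanest route to believing the rank is $6$: since the six points are explicitly chosen to be non-coplanar and to sit on the elliptic quadric $\mathcal{E}$, and since $\mathcal{E}$ is non-degenerate, the pencil-type argument shows $\mathcal{E}$ should be the unique member of the family \eqref{quadricsform} through them; the conditions $a\neq\tfrac12$ in (a) and $a\neq1$ in (b) are exactly the genericity constraints needed to rule out a second (possibly degenerate) quadric sharing these six points, i.e. to prevent the rank from dropping to $5$.

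Concretely, I would discharge the rank claim by the following steps: compute the explicit $6\times7$ matrix $A$ for the chosen $\omega$ (a primitive root of the stated cubic, which supplies $\lambda_1,\lambda_2,\lambda_3$); perform row reduction over $\FF_q$ treating $a$ as a parameter; and track the pivot structure, showing the determinant of some $6\times6$ submatrix factors as a nonzero constant times a polynomial in $a$ whose only roots are the excluded values ($a=\tfrac12$ and the square/non-square obstruction in (a); $a=1$ and $\mathrm{Tr}(a)=1$ in (b)). The hardest part is organising this $6\times7$ elimination cleanly enough that the excluded-value conditions emerge transparently rather than as an opaque determinant; I expect the non-coplanarity of the six points (which is geometrically clear from their $\pm1$ pattern in the last two coordinates) to guarantee that the $u,v,w,t$ columns already contribute four independent pivots, leaving only a small $2\times3$ computation in the $d,e,f$ columns to certify the final rank, and it is in that reduced computation that the conditions on $a$ will appear.
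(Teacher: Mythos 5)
Your overall strategy is exactly the paper's: verify the two hypotheses of Proposition \ref{construction} (six non-coplanar points on an elliptic quadric meeting $X_3=0$ in $X_0X_2-X_1^2=0$, plus $\mathrm{rk}(A)=6$) and invoke it. Case (a) is handled correctly: the quadric $X_0X_2-X_1^2+aX_3^2=0$ does contain the six points, and it is elliptic precisely because $a$ is a non-square (in fact your sign is the right one; the paper's displayed equation $X_0X_2-X_1^2-aX_3^2=0$ does not vanish on $(1,0,-a,\pm1)$ and appears to be a typo). Your rank-$6$ verification is only a plan rather than an executed computation, but that is at the same level of detail as the paper, which writes down the $6\times 7$ matrices and asserts that full rank holds iff $a(1-a)(2a-1)\neq 0$, resp.\ $a(1+a)\neq 0$.

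There is, however, a genuine gap in case (b). In characteristic $2$ the binary form $X_1^2+aX_3^2$ is \emph{never} irreducible: it equals $(X_1+\sqrt{a}\,X_3)^2$, since squaring is a bijection of $\FF_q$. Consequently your proposed quadric $X_0X_2+X_1^2+aX_3^2=0$ is not elliptic at all; it is a degenerate quadric (a cone with vertex $(0,\sqrt{a},0,1)$), and the trace condition $\mathrm{Tr}(a)=1$ plays no role for it. The standard criterion you cite applies to forms \emph{with a cross term}, $X_1^2+X_1X_3+aX_3^2$, which is irreducible over $\FF_q$ iff $\mathrm{Tr}(a)=1$. Worse, the six listed points do not even lie on your quadric: substituting $(1,1,a,1)$ gives $a+1+a=1\neq 0$. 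The paper instead uses $X_0X_2+X_1^2+X_1X_3+aX_3^2=0$, which contains all six points (e.g.\ $(1,1,a,1)$ gives $a+1+1+a=0$), meets $X_3=0$ in $X_0X_2+X_1^2=0$ (the same conic as $X_0X_2-X_1^2=0$ in characteristic $2$), and is elliptic exactly when $\mathrm{Tr}(a)=1$. Without this correction, hypothesis (i) of Proposition \ref{construction} fails — ellipticity is essential there, since the proposition's proof needs the unique quadric of the pencil through the six points to be neither a cone nor hyperbolic — so your argument for case (b) collapses as written, even though it is repairable by taking the correct quadric and then carrying out the rank computation with the coefficients $\lambda_1=0$, $\lambda_2=\lambda_3=1$ coming from $\omega^3+\omega+1=0$.
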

\begin{proof}\begin{itemize}\item[(a)] Since $q\equiv 1\pmod{3}$, there is an irreducible polynomial of the form $\omega^3+\lambda=0$. Using Lemma \ref{juistevorm}, we find that the quadrics of the form \eqref{quadrics}  become
\begin{align} \lambda eX_0^2+\lambda fX_0X_1-dX_0X_2+dX_1^2+eX_1X_2+fX_2^2+X_3(uX_0+vX_1+wX_2+tX_3)=0.\end{align}
It is easy to check that the given six points are not coplanar. Furthermore, they are contained in 
%The points $P_{s,t}=(t^2,st,s^2-a,t)$, with $s\in \FF_q,t\in \FF_q^*$ are affine points of $\mathcal{E}$. six points $P_{s,t}$ with $s\in \{-1,0,1\}$ and $t\in \{1,-1\}$. 
 the elliptic quadric $\mathcal{E}$ with equation $X_0X_2-X_1^2-aX_3^2=0$, which meets $\pi$ in the conic $X_0X_2-X_1^2=0$.  Substituting the $6$ points into \eqref{quadrics} yields a system $\Xi$ of $6$ homogeneous equations in $d,e,f,u,v,w,t$ whose associated coefficient matrix is given by 
 $$\begin{bmatrix} a &\lambda&a^2&1&0&-a&1\\
 a &\lambda&a^2&-1&0&a&1\\
 a &\lambda+1-a&(1-a)^2+\lambda&1&1&1-a&1\\
  a &\lambda+a-1&(1-a)^2-\lambda&1&-1&1-a&1\\
   a &\lambda+1-a&(1-a)^2+\lambda&-1&-1&a-1&1\\
      a &\lambda+a-1&(1-a)^2-\lambda&-1&1&a-1&1
 \end{bmatrix}$$
 
 It can be checked that this matrix has full rank if and only if $a(1-a)(2a-1)\neq 0$. The statement follows from Proposition \ref{construction}.

\item[(b)] Now assume that $q$ is even and $\omega^3=\omega+1$. 
Using Lemma \ref{juistevorm}, we find that the equation for the quadrics \eqref{quadrics} now becomes
\begin{align} eX_0^2+(e+ f)X_0X_1+(d+f)X_0X_2+dX_1^2+eX_1X_2+fX_2^2\\ +X_3(uX_0+vX_1+wX_2+tX_3)=0.\label{qu2}\end{align}
The six given points are contained in the elliptic quadric $\mathcal{E}$ with equation $X_0X_2+X_1^2+X_1X_3+aX_3^2=0$, which meets $\pi$ in $X_0X_2+X_1^2=0$.
Again, these points are not coplanar, and expressing that those six points lie on an equation of the form \eqref{qu2} yields a system $\Xi$ in $d,e,f,u,v,w,t$ with coefficient matrix

$$\begin{bmatrix} a&1&a+a^2&1&0&a&1\\
1+a&a&1+a+a^2&1&1&a&1\\
a&a^2&a+1&a&0&1&1\\
1+a&a^2+a+1&1&a&1&1&1\\
0&1+a+a^3&a+a^2+a^4&1&a&a^2&1\\
0&a^4+a^3+a&a^3+a^2+1&a^2&a&1&1\end{bmatrix}$$
This matrix has full rank if and only if $a(1+a)\neq 0$. Hence, since $a\neq 0,1$, the statement follows from Proposition \ref{construction}.\qedhere
\end{itemize}
\end{proof}
%\textbf{Acknowledgements.}
%TODO

\bibliographystyle{plain}
\bibliography{main.bib}

Lins Denaux \& Jozefien D'haeseleer

Ghent University\\
Department of Mathematics: Analysis, Logic and Discrete Mathematics

Krijgslaan $281$ -- Building S$8$

$9000$ Ghent

BELGIUM

\texttt{e-mail: lins.denaux@ugent.be}\\
\texttt{e-mail: jozefien.dhaeseleer@ugent.be}

\texttt{website: }\url{https://users.ugent.be/~ldnaux}\\
\texttt{website: }\url{https://users.ugent.be/~jmdhaese}

\bigskip
Geertrui Van de Voorde

University of Canterbury
(Te Whare W$\overline{\mbox{a}}$nanga o Waitaha)\\
School of Mathematics and Statistics

Private Bag $4800$ -- Erskine Building

Christchurch $8140$

NEW ZEALAND

\texttt{e-mail: geertrui.vandevoorde@canterbury.ac.nz}

%\texttt{website: }\url{https://www.canterbury.ac.nz/engineering/contact-us/people/geertrui-van-de-voorde.html}

\end{document}